\documentclass[11pt,a4paper]{amsart}

\usepackage[utf8]{inputenc}
\usepackage[T1]{fontenc}

\usepackage{geometry}
\usepackage{setspace}
\usepackage{amsmath}
\usepackage{amssymb}
\usepackage{amsthm}
\usepackage{mathtools}
\usepackage{xcolor}
\usepackage{microtype}
\usepackage{needspace}

\usepackage{aliascnt}

\theoremstyle{plain}
\newtheorem{theorem}{Theorem}[section]

\newaliascnt{lemma}{theorem}
\newtheorem{lemma}[lemma]{Lemma}
\aliascntresetthe{lemma}

\newaliascnt{proposition}{theorem}
\newtheorem{proposition}[proposition]{Proposition}
\aliascntresetthe{proposition}

\newaliascnt{corollary}{theorem}
\newtheorem{corollary}[corollary]{Corollary}
\aliascntresetthe{corollary}

\theoremstyle{definition}
\newaliascnt{problem}{theorem}
\newtheorem{problem}[problem]{Problem}
\aliascntresetthe{problem}

\newaliascnt{definition}{theorem}
\newtheorem{definition}[definition]{Definition}
\aliascntresetthe{definition}

\theoremstyle{remark}
\newaliascnt{remark}{theorem}
\newtheorem{remark}[remark]{Remark}
\aliascntresetthe{remark}

\usepackage[colorlinks=true,
            linkcolor=purple,
            citecolor=teal,
            urlcolor=violet]{hyperref}

\usepackage[nameinlink,noabbrev]{cleveref}

\crefname{theorem}{Theorem}{Theorems}
\Crefname{theorem}{Theorem}{Theorems}
\crefname{lemma}{Lemma}{Lemmas}
\Crefname{lemma}{Lemma}{Lemmas}
\crefname{proposition}{Proposition}{Propositions}
\Crefname{proposition}{Proposition}{Propositions}
\crefname{corollary}{Corollary}{Corollaries}
\Crefname{corollary}{Corollary}{Corollaries}
\crefname{problem}{Problem}{Problems}
\Crefname{problem}{Problem}{Problems}
\crefname{definition}{Definition}{Definitions}
\Crefname{definition}{Definition}{Definitions}
\crefname{remark}{Remark}{Remarks}
\Crefname{remark}{Remark}{Remarks}
\crefname{section}{Section}{Sections}
\Crefname{section}{Section}{Sections}
\crefname{subsection}{Subsection}{Subsections}
\Crefname{subsection}{Subsection}{Subsections}
\crefname{equation}{equation}{equations}
\Crefname{equation}{Equation}{Equations}

\newcommand{\R}{\mathbb{R}}
\newcommand{\N}{\mathbb{N}}
\newcommand{\C}{\mathbb{C}}
\newcommand{\Sphere}{\mathbb{S}}
\newcommand{\dist}{\operatorname{dist}}

\newcommand{\cond}[2]{\mu\left(#1,#2\right)}
\newcommand{\condtotal}[1]{\mu\left(#1\right)}
\newcommand{\Energy}{E_{\log}}
\newcommand{\Energyminimal}{\mathcal{E}_{\log}}

\begin{document}

\title[Elliptic Fekete points]{Elliptic Fekete points:
separation, Riesz energies, and polynomial conditioning}

\author[U. Etayo]{Ujué Etayo}
\address{Departamento de Matemáticas, CUNEF Universidad, C/\,Almansa 101, 28040 Madrid, Spain}
\email{ujue.etayo@cunef.edu}

\thanks{The author was supported by the FBBVA starting grant associated with the Jos\'e Luis Rubio de Francia Prize; by the Ram\'on y Cajal Programme of the Spanish Ministry of Science, Innovation and Universities, through the Agencia
Estatal de Investigaci\'on (AEI), under grant RYC2024-049105-I, co-funded by the European Social Fund Plus (ESF+); and by grant PID2025-173363NA-I00, funded by MICIU/AEI/10.13039/501100011033.
Part of this work was carried out at Institut Mittag-Leffler
during the workshop ``Metric Algebraic Geometry''
(June 22--26, 2026).
The author gratefully acknowledges the institute's hospitality.}

\date{\today}

\begin{abstract}
Elliptic Fekete configurations minimize logarithmic energy on the sphere. They also provide a natural source of complex polynomials with controlled Shub--Smale condition number. The link between these two problems is the product of the distances from each point to all the other points: logarithmic energy controls the geometric mean of these products, whereas local conditioning depends on each product separately. This paper studies the passage from global information to uniform estimates at individual nodes, exploring its connections with quantitative equidistribution, Riesz energies, and polynomial conditioning.
We improve separation bounds for elliptic Fekete configurations and obtain quantitative Riesz-energy estimates for both these configurations and spherical roots of polynomials with optimal-order global conditioning.
For $0<s<2$, these estimates also improve explicit
upper bounds for the minimal Riesz $s$-energy.
\end{abstract}

\maketitle
\begingroup
\singlespacing
\endgroup
\section{Introduction}\label{sec:introduction}

We study several related problems concerning point configurations on the $2$-dimensional sphere and the conditioning of the associated polynomials.

\subsection{Notation and energies}\label{sec:notation}

Let $\Sphere^2\subset\R^3$ be the unit sphere, equipped with the normalized surface measure $\sigma$, so that $\sigma(\Sphere^2)=1$. We write
\[
\omega_d=\{\hat{z}_1,\ldots,\hat{z}_d\}\subset\Sphere^2
\]
for a configuration of $d$ distinct points, unless multiplicities are explicitly allowed. All distances in energy and product expressions are Euclidean chordal distances inherited from $\R^3$, and all energies count \emph{ordered} pairs.
The logarithmic and Riesz $s$-energies are defined, respectively, by
\begin{align*}
\Energy(\omega_d)
&:=-\sum_{\substack{1\le i,j\le d\\ i\ne j}}
\log\|\hat{z}_i-\hat{z}_j\|,
\\
E_s(\omega_d)
&:=\sum_{\substack{1\le i,j\le d\\ i\ne j}}
\|\hat{z}_i-\hat{z}_j\|^{-s},\qquad s\in\R\setminus\{0\}.
\end{align*}
For $s>0$, the local Riesz potential at a node is
\begin{equation*}
U_{s,i}(\omega_d):=\sum_{\substack{1\le j\le d\\j\ne i}}
\frac{1}{\|\hat{z}_i-\hat{z}_j\|^s},
\qquad
E_s(\omega_d)=\sum_{i=1}^d U_{s,i}(\omega_d).
\end{equation*}
The relevant continuous quantities, with these normalizations, are
\begin{equation}\label{def:kappa}
\kappa:=\int_{\Sphere^2}\log\frac{1}{\|p-q\|}\,\mathrm{d}\sigma(p)
=\frac12-\log2,
\qquad q\in\Sphere^2,
\end{equation}
and
\begin{equation}\label{eq:continuous-riesz-potential}
\int_{\Sphere^2}\frac{\mathrm{d}\sigma(p)}{\|p-q\|^s}
=\frac12\int_0^2 t^{1-s}\,\mathrm{d}t
=\frac{2^{1-s}}{2-s},\qquad 0<s<2.
\end{equation}
For general background on discrete energies, see \cite{BorodachovHardinSaff2019}.

For any finite configuration, let us denote by
\begin{equation}\label{eq:products-definition}
H_{\omega_d}(p):=\prod_{j=1}^d\|p-\hat{z}_j\|,
\qquad
\mathcal Q(\omega_d):=\max_{p\in\Sphere^2}H_{\omega_d}(p),
\end{equation}
and define the nodal products and squared norms by
\begin{align*}
P_i&:=H_{\omega_d\setminus\{\hat{z}_i\}}(\hat{z}_i)
=\prod_{j\ne i}\|\hat{z}_i-\hat{z}_j\|,
\\
I_d&:=\int_{\Sphere^2}H_{\omega_d}(p)^2\,\mathrm{d}\sigma(p),
\qquad
I_{d,i}:=\int_{\Sphere^2}H_{\omega_d\setminus\{\hat{z}_i\}}(p)^2\,\mathrm{d}\sigma(p).
\end{align*}
The dependence of $P_i$, $I_d$, and $I_{d,i}$ on the configuration $\omega_d$ is suppressed in the notation.
We write $d_\sigma(p,q)$ for geodesic distance, and use the closed caps
\[
B_{\rm ch}(q,t):=\{p:\|p-q\|\le t\},\qquad
B_{\rm geo}(q,r):=\{p:d_\sigma(p,q)\le r\}.
\]
The spherical-cap discrepancy of $\omega_d$ is defined by
\[
D_\infty(\omega_d):=
\sup_{\substack{q\in\Sphere^2\\0\le t\le2}}
\left|
\frac{\#(\omega_d\cap B_{\rm ch}(q,t))}{d}
-\frac{t^2}{4}
\right|.
\]
and the $L^2$ version by
\[
D_2(\omega_d):=
\left(
\int_{\Sphere^2}\int_0^2
\left|
\frac{\#(\omega_d\cap B_{\rm ch}(q,t))}{d}
-\frac{t^2}{4}
\right|^2
t\,\mathrm{d}t\,\mathrm{d}\sigma(q)
\right)^{1/2}.
\]
For $p\in\Sphere^2$, the distance from $p$ to the configuration
$\omega_d$ is
\[
\dist(p,\omega_d):=\min_{1\le j\le d}\|p-\hat{z}_j\|.
\]
The covering radius of $\omega_d$ is defined by
\[
\rho(\omega_d):=\max_{p\in\Sphere^2}\dist(p,\omega_d).
\]
For $d\ge2$, its separation distance is
\[
\delta(\omega_d):=\min_{1\le i<j\le d}\|\hat{z}_i-\hat{z}_j\|.
\]
The chordal Voronoi cell of $\hat{z}_i$ is
\[
V_i:=\{p\in\Sphere^2:\|p-\hat{z}_i\|\le\|p-\hat{z}_j\|
\text{ for every }j\}.
\]
The cell boundaries have $\sigma$-measure zero.

We write $A\lesssim B$ if $A\le CB$ for some constant $C>0$,
and $A\asymp B$ if $cB\le A\le CB$ for some constants $c,C>0$.
Unless otherwise specified, implicit constants are universal;
subscripts indicate allowed dependence on the corresponding parameters.

We write families as $\{\omega_d\}_{d\in\mathcal D}$ or
$\{f_d\}_{d\in\mathcal D}$, where $\mathcal D$ is an infinite
subset of $\N$. 
All asymptotic statements are understood along this index set.

\subsection{Logarithmic energy and log-balancedness}\label{sec:EFP}

Set
\[
\Energyminimal(d):=\min_{\hat{z}_1,\ldots,\hat{z}_d\in\Sphere^2}
\Energy(\hat{z}_1,\ldots,\hat{z}_d),
\qquad
M_d:=\exp\bigl(-\Energyminimal(d)\bigr).
\]
Thus $M_d$ is the maximum ordered product of pairwise distances. A configuration attaining this maximum, or equivalently minimizing $\Energy$, is called an \emph{elliptic Fekete configuration}. We retain the classical global problem:
\begin{problem}\label{prob:min-log-energy}
Give the asymptotic expansion of $\Energyminimal(d)$ as $d\to\infty$.
\end{problem}
The expansion available at the order needed here is
\begin{equation}\label{eq:asymp_min_energy}
\Energyminimal(d)=\kappa d^2-\frac12d\log d+C_{\log}d+o(d).
\end{equation}
The existence of $C_{\log}$ and its upper bound below follow from B\'etermin and Sandier \cite[Theorem~1.5]{BeterminSandier2018}. Together with Marzo's lower bound \cite{Marzo2025}, they give
\begin{equation}\label{eq:Clog-bounds}
-0.056845\ldots=C_{\rm lb}\le C_{\log}\le C_{\rm conj}=-0.055605\ldots.
\end{equation}
Here
\begin{equation*}
C_{\rm conj}
=2\log2+\frac12\log\frac23
+3\log\frac{\sqrt\pi}{\Gamma(1/3)},
\end{equation*}
where $\Gamma$ is the gamma function. 
The conjecture $C_{\log}=C_{\rm conj}$ is discussed in
\cite{BrauchartHardinSaff2012,BeterminSandier2018}.
A proof of this equality has recently been claimed in a
preprint released by OpenAI \cite{OpenAI2026Coulomb}.
The author has not independently verified the argument.
Our estimates use only the established upper bound
$C_{\log}\le C_{\rm conj}$ and do not rely on this equality.

For every configuration, we have
\begin{equation*}
\Energy(\omega_d)=-\sum_{i=1}^d\log P_i.
\end{equation*}
In the Fekete case, $\prod_iP_i=M_d$, and \cref{eq:asymp_min_energy} therefore fixes their geometric mean:
\begin{equation}\label{eq:fekete-geometric-mean}
M_d^{1/d}=\left(\prod_{i=1}^dP_i\right)^{1/d}
=\sqrt d\,e^{-\kappa d-C_{\log}+o(1)}.
\end{equation}
We use the term log-balancedness for the following uniform
control of the nodal products.

\begin{definition}\label{def:log-balanced}
A family $\{\omega_d\}_{d\in\mathcal D}$ of spherical
configurations is \emph{log-balanced} if there exist constants
$c,C>0$, independent of $d$ and $i$, such that
\[
c\sqrt d\,e^{-\kappa d}
\le P_i\le
C\sqrt d\,e^{-\kappa d},
\qquad d\in\mathcal D,\quad 1\le i\le d.
\]
\end{definition}

For a log-balanced family, taking logarithms in the definition
and summing over the nodes gives
\[
\Energy(\omega_d)
=-\sum_{i=1}^d\log P_i
=\kappa d^2-\frac12d\log d+O(d).
\]
Thus its logarithmic energy has the same first two terms
as the minimum in \cref{eq:asymp_min_energy}, and
\[
0\le \Energy(\omega_d)-\Energyminimal(d)\lesssim d.
\]
The implicit constants depend only on the log-balancedness
bounds of the family. This estimate does not determine
the coefficient of the linear term.

The configurations constructed in \cite{BeltranEtayoMarzoOrtegaCerda} satisfy this condition.
Our study of log-balancedness is motivated by two main problems.

\begin{problem}[Existence]\label{prob:log-balanced-existence}
Does there exist a log-balanced family of elliptic Fekete
configurations?
\end{problem}

\begin{problem}[Universality]\label{prob:uniform-products-conditioning}
Is every family of elliptic Fekete configurations log-balanced?
\end{problem}

We obtain uniform estimates and equivalent criteria for
Fekete families, but do not resolve either problem.

\subsection{Spherical points and polynomials}\label{sec:sphericalpoints}

Let $N:=(0,0,1)$ be the north pole. 
Stereographic projection from $N$ is given by
\[
\pi:\Sphere^2\setminus\{N\}\longrightarrow\C,
\qquad
\pi(x_1,x_2,x_3)=\frac{x_1+\mathrm i x_2}{1-x_3},
\]
with inverse
\begin{equation}\label{eq:inverse-stereographic-projection}
\hat z=\frac{1}{1+|z|^2}
\bigl(2\operatorname{Re}z,2\operatorname{Im}z,|z|^2-1\bigr).
\end{equation}
In particular,
\begin{equation*}
\|\hat z-\hat w\|=
\frac{2|z-w|}{\sqrt{1+|z|^2}\sqrt{1+|w|^2}}.
\end{equation*}
A configuration $\omega_d=\{\hat{z}_1,\ldots,\hat{z}_d\} \subset\Sphere^2$ avoiding $N$ determines a monic polynomial with simple roots,
\begin{equation*}
f_{\omega_d}(z):=\prod_{j=1}^d(z-z_j),
\qquad z_j:=\pi(\hat{z}_j),
\end{equation*}
and this is a bijection between such unordered configurations and monic degree-$d$ polynomials with simple roots. 

To include configurations containing $N$, we identify $\Sphere^2$ with $\mathbb{PC}^1$ and use homogeneous polynomials. 
If $\hat{z}_j$ corresponds to $[u_j:v_j]$, choose a representative $(u_j,v_j)\ne(0,0)$ and define
\begin{equation*}
F_{\omega_d}(U,V):=\prod_{j=1}^d(u_jV-v_jU).
\end{equation*}
This is a homogeneous polynomial of degree $d$ whose zeros in
$\mathbb{PC}^1$ are precisely the points corresponding to $\omega_d$.
Choosing different representatives $(u_j,v_j)$ only multiplies
$F_{\omega_d}$ by a nonzero constant.
When no point is at infinity, $F_{\omega_d}(U,V)=U^df_{\omega_d}(V/U)$ after choosing $[u_j:v_j]=[1:z_j]$. Every rotation $R$ of $\Sphere^2$ is induced on the projective line by a matrix $A\in\mathrm{SU}(2)$. A homogeneous polynomial
representing the rotated configuration $R\omega_d$ is
\[
F_R:=F_{\omega_d}\circ A^{-1}.
\]
Since $\omega_d$ is finite, we can choose $R$ so that
$N\notin R\omega_d$. Then $F_R(0,1)\ne0$, and the corresponding
monic polynomial is
\[
f_{R\omega_d}(z)=\frac{F_R(1,z)}{F_R(0,1)}.
\]

\subsection{Shub--Smale condition number}\label{sec:condition_number}

For a polynomial $f(z)=\sum_{k=0}^d a_kz^k$ of degree $d$, the Bombieri--Weyl norm is defined as
\[
\|f\|_{\mathrm{BW}}^2:=\sum_{k=0}^d\frac{|a_k|^2}{\binom dk}.
\]
It is the unitarily invariant norm of the homogeneous form $U^df(V/U)$. The normalized Shub--Smale condition number measures the first-order sensitivity of a simple projective root to relative coefficient perturbations in this norm. 
At a finite simple root $z_i$, it is
\begin{equation}\label{eq:condition-bombieri}
\cond{f}{z_i}
=\frac{\sqrt d\,(1+|z_i|^2)^{(d-2)/2}}{|f'(z_i)|}
\,\|f\|_{\mathrm{BW}};
\end{equation}
see \cite{ShubSmale1993}. We use $\mu$ only for this normalized condition number and set
\[
\condtotal{f}:=\max_{1\le i\le d}\cond{f}{z_i}.
\]
The bridge to spherical geometry is the exact identity
\begin{equation}\label{eq:norm_cond}
\cond{f}{z_i}
=\frac{\sqrt{d(d+1)}}2\frac{I_d^{1/2}}{P_i},
\end{equation}
proved in \cite[Proposition~2.1]{Etayo2021}; see also \cite[Proposition~2]{ShubSmale1993}. 
Rotations preserve chordal distances and the measure $\sigma$,
so they leave $P_i$ and $I_d$ unchanged. By \cref{eq:norm_cond}, the local condition numbers at corresponding roots are therefore unchanged under rotations, the numerator is common to every root, and
\begin{equation*}
\frac{\cond{f}{z_i}}{\cond{f}{z_j}}=\frac{P_j}{P_i}.
\end{equation*}
There is a universal $c_0>0$ such that every degree-$d$ polynomial with simple roots satisfies
\begin{equation*}
\condtotal{f}\ge c_0\sqrt d;
\end{equation*}
see \cite[Lemma~1.1]{BeltranEtayoMarzoOrtegaCerda} and \cite{Etayo2021} for an explicit asymptotic lower bound. Conversely, \cite{BeltranEtayoMarzoOrtegaCerda} constructs polynomials $f_d$ with
\begin{equation*}
\condtotal{f_d}\le C_0\sqrt d
\end{equation*}
for a universal $C_0$. Thus $\sqrt d$ is the optimal order of the \emph{global} condition number. 
Shub and Smale \cite{ShubSmale1993} proved that degree-$d$
polynomials whose roots correspond to elliptic Fekete
configurations on $\Sphere^2$ have global condition number $O(d)$.

\subsection{Wagner's logarithmic polarization}\label{sec:Wagners}

With the notation of \cref{eq:products-definition}, Wagner's logarithmic polarization problem consists in minimizing
$\mathcal Q(\omega_d)$ over all configurations of $d$ points on
$\Sphere^2$. We denote the minimum by
\begin{equation*}
\mathcal Q_d
:=\min_{\hat{z}_1,\ldots,\hat{z}_d\in\Sphere^2}
\mathcal Q(\omega_d),
\qquad
\omega_d=\{\hat{z}_1,\ldots,\hat{z}_d\} \subset\Sphere^2.
\end{equation*}
Repeated points are allowed in this minimization. 
Wagner's estimates \cite[Theorems~1 and~2]{Wagner1989} give absolute constants $c_1,c_2>0$ such that
\begin{equation*}
(1+c_1)e^{-\kappa d}\le\mathcal Q_d\le(1+c_2)e^{-\kappa d}.
\end{equation*}
For Wagner's problem, we consider asymptotic optimality at
the exponential scale, expressed by
\[
\log\frac{\mathcal Q(\omega_d)}{\mathcal Q_d}=o(d)
\qquad(d\to\infty).
\]
This condition does not require
$\mathcal Q(\omega_d)/\mathcal Q_d\to1$.

\subsection{Scope and organization}\label{sec:organization}

We obtain separation bounds, nodewise Riesz-potential estimates,
and comparisons between nodal products and integral norms for
elliptic Fekete configurations. 
An inverse-fourth-power bound for local logarithmic minimizers
improves the separation estimate of Dragnev \cite{Dragnev2002}.
Combined with covering, it yields a further improvement
for elliptic Fekete configurations
(\cref{th:local-minimum-inverse-fourth,cor:fekete-separation}).
The product and integral comparisons also yield quantitative
bounds for logarithmic polarization and local conditioning.

The local counting and covering estimates are taken from
Lev and Ortega-Cerd\`a
\cite[Theorem~1 and Lemma~5]{LevOrtegaCerda2016}.
For $0<s<2$, Leopardi \cite[Corollary~1.2]{Leopardi2013}
bounds the Riesz-energy error in terms of discrepancy and
separation. Applied to Fekete configurations with
$D_\infty(\omega_d)=O(d^{-1/2})$, his result gives a global
remainder $O_s(d^{3/2+s/4})$.
We improve this to $O_s(d^{1+s/2})$, obtain potential estimates
uniform at every node, and treat the critical exponent $s=2$
(\cref{thm:fekete-riesz-potentials}).
We further give explicit bounds for the normalized second-order
remainders, including strictly negative upper bounds
(\cref{cor:fekete-riesz-second-order}).
For every fixed $0<s<2$, these upper bounds also apply to the
minimal Riesz energy and strictly improve the second-order constant
of de la Torre and Marzo \cite[Corollary~4.1]{deLaTorreMarzo2025};
see \cref{prop:minimal-riesz-upper-comparison}.
The lower constants recover those of Brauchart
\cite[Theorems~3 and~4]{Brauchart2026}.

The global Riesz estimates extend to spherical roots of
polynomials with condition number $O(\sqrt d)$
(\cref{th:optimal-conditioning-riesz}).
Both classes are treated by \cref{lem:log-to-riesz-transfer},
which combines a logarithmic-energy bound with control of
close pairs. For Fekete configurations, second variation
provides this control; for well-conditioned polynomials,
we combine \cite[Theorem~1.5]{Etayo2021} with
\cref{lem:conditioning-close-pairs}, without assuming
logarithmic minimality or separation of order $d^{-1/2}$.
For Fekete families, we establish equivalent criteria for
log-balancedness and derive its consequences for integral
norms and local conditioning. The existence and universality
questions in
\cref{prob:log-balanced-existence,prob:uniform-products-conditioning}
remain unresolved here.
For general polynomials, our analysis of the construction in
\cite{BeltranLizarte2022} shows that optimal-order global
conditioning alone does not imply log-balancedness.

The statements are collected in \cref{sec:main_results}:
\cref{sec:prop_EFP} treats separation and Riesz estimates;
\cref{sec:products-integrals} treats products and integrals;
\cref{sec:polarization-results} treats logarithmic polarization;
and \cref{sec:logbalanced} treats conditioning and log-balancedness.
\Cref{sec:EFP_FC} recalls the interpretation of elliptic Fekete
configurations on the projective line.
All proofs are given in \cref{sec:proofs}.
\section{Main results}\label{sec:main_results}

\subsection{Geometry and Riesz estimates}\label{sec:prop_EFP}

\subsubsection{Geometric inputs (Lev--Ortega-Cerd\`a)}
We recall the spherical forms of the local counting and covering
estimates in \cite[Theorem~1 and Lemma~5]{LevOrtegaCerda2016},
together with their discrepancy consequences.

\begin{theorem}\label{thm:fekete-cap-discrepancy}
There is a universal $C>0$ such that every elliptic Fekete configuration $\omega_d\subset\Sphere^2$, $d\ge2$, satisfies
\begin{equation}\label{eq:fekete-local-count}
\left|\#(\omega_d\cap B_{\rm geo}(q,r))
-d\,\sigma(B_{\rm geo}(q,r))\right|
\le C(1+r\sqrt d),
\end{equation}
uniformly for $q\in\Sphere^2$ and $0\le r\le\pi$. 
In particular,
\begin{equation}\label{eq:fekete-cap-discrepancy}
D_\infty(\omega_d)\le Cd^{-1/2}.
\end{equation}
\end{theorem}

\nopagebreak[4]
\noindent
See the \hyperref[proof:fekete-cap-discrepancy]{proof}
on page~\pageref{proof:fekete-cap-discrepancy}.

\begin{theorem}\label{th:covering-radius-fekete}
There is a universal $C>0$ such that every elliptic Fekete configuration satisfies
\[
\frac2{\sqrt d}\le\rho(\omega_d)\le\frac C{\sqrt d},
\qquad d\ge2.
\]
\end{theorem}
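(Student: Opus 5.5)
The statement has two halves, and I would prove them separately. The lower bound $\rho(\omega_d)\ge 2/\sqrt d$ is a pure area argument valid for every configuration. The upper bound is the Fekete-specific content, and I would derive it from the Lev--Ortega-Cerd\`a input behind \cref{thm:fekete-cap-discrepancy}, namely the covering estimate of \cite[Lemma~5]{LevOrtegaCerda2016} transferred to the sphere.

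\emph{Lower bound.} Let $\rho=\rho(\omega_d)$. By definition the chordal caps $B_{\rm ch}(\hat z_j,\rho)$, $1\le j\le d$, cover $\Sphere^2$. From \cref{eq:continuous-riesz-potential} (or Archimedes), $\sigma(B_{\rm ch}(q,t))=t^2/4$ for $0\le t\le2$. Subadditivity of $\sigma$ then gives $1\le d\rho^2/4$, so $\rho\ge2/\sqrt d$. If $\rho>2$ were possible the bound would hold trivially, but it is not, since chordal distances never exceed $2$.

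\emph{Upper bound.} Fix $p\in\Sphere^2$ and set $r=\dist(p,\omega_d)$; the goal is $r\le C/\sqrt d$. The open cap of chordal radius $r$ about $p$ contains no point of $\omega_d$. Passing to geodesic radius $r'\asymp r$ gives $B_{\rm geo}(p,r')\cap\omega_d=\emptyset$ up to boundary points, and a slightly smaller radius handles those. Applying \cref{eq:fekete-local-count} at $q=p$ yields
\[
d\,\sigma(B_{\rm geo}(p,r'))\le C(1+r'\sqrt d).
\]
Since $\sigma(B_{\rm geo}(p,r'))\asymp r'^2$ for $r'\le\pi$, this reads $d r'^2\lesssim 1+r'\sqrt d$, which forces $r'\sqrt d\lesssim1$. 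Hence $r\lesssim d^{-1/2}$ uniformly in $p$, and therefore $\rho(\omega_d)\le C/\sqrt d$.

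The step that needs care, and is the main obstacle, is making sure \cref{eq:fekete-local-count} really is available with the additive constant $C$, and not merely in a form with error $o(r^2d)$ that only applies for $r\gg d^{-1/2}$. I take \cref{thm:fekete-cap-discrepancy} as given. If one preferred to avoid it, I would instead redo the Lev--Ortega-Cerd\`a argument directly:
\begin{itemize}
\item suppose an empty cap of radius $A/\sqrt d$ exists with $A$ large;
\item compare against the Fekete extremality of $\prod_{i\ne j}\|\hat z_i-\hat z_j\|$ by moving the point of $\omega_d$ with the smallest nodal product $P_i$ to the centre of the empty cap;
\item use the equilibrium identity $\int\log\|p-q\|^{-1}\,d\sigma(q)=\kappa$ and the local counting upper bound to show that the new product $H_{\omega_d\setminus\{\hat z_i\}}$ at the centre exceeds $P_i$ by a factor $e^{cA^2}$, contradicting maximality.
\end{itemize}
With the stated theorem in hand, however, the short counting argument above suffices.
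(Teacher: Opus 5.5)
Your proof is correct and follows the paper's argument. The lower bound is the same cap-area covering count. For the upper bound, both arguments apply \cref{eq:fekete-local-count} to a cap about an arbitrary point: the paper fixes the radius $A/\sqrt d$ and shows every such cap contains a node, while you bound the radius of the largest empty cap directly. A small advantage of your version is that $\sigma(B_{\rm geo}(p,r))=\sin^2(r/2)\ge r^2/\pi^2$ holds on all of $[0,\pi]$, so you need no separate treatment of finitely many small degrees.
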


\nopagebreak[4]
\noindent
See the \hyperref[proof:covering-radius-fekete]{proof}
on page~\pageref{proof:covering-radius-fekete}.

\begin{corollary}\label{cor:fekete-discrepancy-distance-energy}
Every elliptic Fekete configuration satisfies
\[
D_2(\omega_d)\lesssim d^{-1/2},
\qquad
0\le\frac43d^2-E_{-1}(\omega_d)\lesssim d,
\qquad d\ge2,
\]
with universal constants.
\end{corollary}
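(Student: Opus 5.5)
The first bound is a direct consequence of \cref{thm:fekete-cap-discrepancy}, and the second follows from it through Stolarsky's invariance principle. For $D_2$, the integrand in the definition is pointwise bounded by $D_\infty(\omega_d)^2\le C^2d^{-1}$, uniformly in $q$ and $t$. The measure $t\,\mathrm{d}t\,\mathrm{d}\sigma(q)$ on $\Sphere^2\times[0,2]$ has total mass $\int_0^2t\,\mathrm{d}t=2$. Hence $D_2(\omega_d)^2\le 2C^2d^{-1}$, which gives $D_2(\omega_d)\lesssim d^{-1/2}$.

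For the distance energy I will invoke Stolarsky's invariance principle on $\Sphere^2$, written in the present normalization (chordal distances, normalized $\sigma$, ordered pairs, chordal caps parametrized by $t\in[0,2]$). It reads
\[
\frac{1}{d^2}\sum_{i,j}\|\hat z_i-\hat z_j\|
+c_*\,D_2(\omega_d)^2
=\int\!\!\int\|p-q\|\,\mathrm{d}\sigma(p)\,\mathrm{d}\sigma(q)=\frac43,
\]
for an explicit constant $c_*>0$. The double integral equals $\frac12\int_0^2t\cdot t\,\mathrm{d}t=\frac43$, which matches \cref{eq:continuous-riesz-potential} with $s=-1$. Diagonal terms vanish, so the left-hand sum is exactly $E_{-1}(\omega_d)$. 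Multiplying by $d^2$ gives
\[
\frac43d^2-E_{-1}(\omega_d)=c_*\,d^2D_2(\omega_d)^2\ge0 .
\]
The first part then yields the upper bound $c_*d^2\cdot O(d^{-1})=O(d)$. Nonnegativity holds for any configuration.

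The only delicate point is the normalization of Stolarsky's identity. The classical form uses caps $C(q,\theta)=\{p:\langle p,q\rangle\ge\cos\theta\}$ and a weight measure in $\theta$, and the proof needs that measure to correspond to $t\,\mathrm{d}t$ up to a constant factor. Indeed, the cap of chordal radius $t$ is $\{\langle p,q\rangle\ge 1-t^2/2\}$. The standard kernel identity
\[
\int_{\Sphere^2}\int_0^2\bigl(\mathbf 1_{B_{\rm ch}(q,t)}(p)-\sigma(B_{\rm ch}(q,t))\bigr)\bigl(\mathbf 1_{B_{\rm ch}(q,t)}(p')-\sigma(B_{\rm ch}(q,t))\bigr)\,t\,\mathrm{d}t\,\mathrm{d}\sigma(q)=c\Bigl(\tfrac43-\|p-p'\|\Bigr)
\]
can be checked directly. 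Symmetry of caps and rotation invariance reduce it to a one-variable computation, and then one sums over $p,p'\in\omega_d$. I expect this constant-checking to be the main (routine) obstacle. Only positivity and finiteness of $c_*$ matter, so citing Stolarsky (or Brauchart--Dick's version) with the change of variables $\cos\theta=1-t^2/2$ suffices.
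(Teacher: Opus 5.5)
Your proposal is correct and follows the paper's proof: bound $D_2(\omega_d)^2\le 2D_\infty(\omega_d)^2\lesssim d^{-1}$ using the cap-discrepancy theorem, then apply Stolarsky's invariance principle, which in this normalization reads $4D_2(\omega_d)^2=\frac43-E_{-1}(\omega_d)/d^2$. So your constant is $c_*=4$, which is the Brauchart--Dick constant on $\Sphere^2$, since the substitution $\cos\theta=1-t^2/2$ turns $\mathrm{d}(\cos\theta)$ into $t\,\mathrm{d}t$ up to sign; as you note, only its positivity and finiteness matter.
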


\nopagebreak[4]
\noindent
See the \hyperref[proof:fekete-discrepancy-distance-energy]{proof}
on page~\pageref{proof:fekete-discrepancy-distance-energy}.

The first assertion follows from $D_2\le\sqrt2 D_\infty$, and the second combines this bound with Stolarsky's invariance principle \cite{Stolarsky1973}.

\subsubsection{Separation and Riesz estimates}
Separation estimates for elliptic Fekete configurations go back to
\cite{RakhmanovSaffZhou1994}; Dragnev \cite[Theorem~2]{Dragnev2002}
proved $\delta(\omega_d)\ge2/\sqrt{d-1}$. The next result uses
second variation and requires only local minimality.

\begin{samepage}
\begin{theorem}\label{th:local-minimum-inverse-fourth}
Let $\omega_d\subset\Sphere^2$ be a configuration of $d\ge2$
distinct points that locally minimizes $\Energy$ on $(\Sphere^2)^d$.
Then
\begin{gather}
U_{4,i}(\omega_d)\le\frac{(d-1)^2}{16},
\qquad 1\le i\le d,
\label{eq:local-minimum-inverse-fourth}\\
\delta(\omega_d)\ge\frac{2}{(d^2-3d+3)^{1/4}}.
\label{eq:local-minimum-separation}
\end{gather}
\end{theorem}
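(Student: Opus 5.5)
The plan is to get \cref{eq:local-minimum-inverse-fourth} from the second-order optimality condition at the single node $\hat z_i$, and then deduce \cref{eq:local-minimum-separation} by a counting step. The counting step is straightforward. Suppose $\delta(\omega_d)=\|\hat z_i-\hat z_k\|$. Every other term of $U_{4,i}$ satisfies $\|\hat z_i-\hat z_j\|\le 2$, so it contributes at least $1/16$. Hence
\[
\delta(\omega_d)^{-4}+\frac{d-2}{16}\le U_{4,i}(\omega_d)\le\frac{(d-1)^2}{16},
\]
which gives $\delta(\omega_d)^{-4}\le (d^2-3d+3)/16$, that is, \cref{eq:local-minimum-separation}. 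All the work is therefore in \cref{eq:local-minimum-inverse-fourth}.

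For \cref{eq:local-minimum-inverse-fourth}, first rotate so that $\hat z_i$ is the south pole, i.e.\ $z_i=0$ in the stereographic chart; rotations preserve $\Energy$ and $U_{4,i}$. Freeze the other points and move only $\hat z_i$ to $\hat z$. Using the chordal formula for $\|\hat z-\hat w\|$, the part of $-\Energy/2$ depending on $z$ is
\[
F(z)=\sum_{j\ne i}\log|z-z_j|-\frac{d-1}{2}\log(1+|z|^2)+\text{const},
\]
and local minimality forces $z=0$ to be a local maximum of $F$.

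Next, read off the first- and second-order conditions at $z=0$. The first sum equals $\operatorname{Re} g$ with $g(z)=\sum_{j\ne i}\log(z-z_j)$ locally holomorphic. Its real Hessian at $0$ is therefore trace-free, with eigenvalues $\pm|g''(0)|=\pm\bigl|\sum_{j\ne i} z_j^{-2}\bigr|$. The Hessian of the last term at $0$ is $-(d-1)I$. Hence:
\begin{itemize}
\item the first-order condition gives $\sum_{j\ne i}z_j^{-1}=0$;
\item the second-order condition gives $\bigl|\sum_{j\ne i}z_j^{-2}\bigr|\le d-1$.
\end{itemize}
Translated back to the sphere, where $\|\hat z_i-\hat z_j\|^2=4|z_j|^2/(1+|z_j|^2)$, these are a vanishing vector condition and a bound on the trace-free part of the Hessian. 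Both are sums of the form $\sum_j w_j^{k}/\|\hat z_i-\hat z_j\|^{2k}$ with tangent vectors $w_j$ viewed as complex numbers. The tangent projection of $\hat z_j$ has squared length $1-u_j^2$, where $u_j=\hat z_i\cdot\hat z_j$.

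The main obstacle is passing from this bound on a \emph{signed} sum to a bound on $\sum_j\|\hat z_i-\hat z_j\|^{-4}$. Cancellation among the $z_j^{-2}$ is exactly what must be excluded, and the one-node condition alone cannot do this. I would try two things in order.
\begin{enumerate}
\item First, use the full local minimality on $(\Sphere^2)^d$ via a one-parameter deformation that is not an isometry: a conformal (Möbius) dilation $z\mapsto e^{\varepsilon}z$ centred at $\hat z_i$, applied to all points at once. For such a map, the second variation of $\sum_{j\ne k}\log|z_j-z_k|$ splits into explicit terms, and the second variation of the normalizing factors $\log(1+|z|^2)$ produces quantities like $\sum_j |z_j|^2/(1+|z_j|^2)^2$. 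Cross terms involving pairs not containing $i$ should be controlled by the first-order conditions at every node. The target is an inequality whose node-$i$ terms sum to exactly $U_{4,i}$ and whose right-hand side is $\bigl((d-1)/4\bigr)^2$.
\item If that fails, I would combine the trace-free Hessian bound with the first-order identity $\sum_j z_j^{-1}=0$ by a Cauchy--Schwarz or squared-norm argument. The idea is to estimate $\sum_j|z_j|^{-4}$ through $\bigl|\sum_j z_j^{-2}\bigr|^2$ plus a nonnegative cross-term that the stationarity condition kills.
\end{enumerate}
The exact constant $1/16$ suggests that the final inequality is of the form $\bigl(\sum_j\|\cdot\|^{-4}\bigr)^{1/2}\le\frac{d-1}{4}$, coming from such a squared-norm identity.
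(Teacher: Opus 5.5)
Your counting step for \cref{eq:local-minimum-separation} is correct and is exactly the paper's. The gap is \cref{eq:local-minimum-inverse-fourth}: you do not prove it, and neither of your two routes can close it.

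Route (2) uses only information at node $i$, and that information is genuinely too weak. Take a configuration invariant under the quarter-turn about the axis through $\hat z_i$ (in your chart, under $z\mapsto\mathrm{i}z$) that contains the four nodes $z_j=\varepsilon,\mathrm{i}\varepsilon,-\varepsilon,-\mathrm{i}\varepsilon$. Then $\sum_{j\ne i}z_j^{-1}=\sum_{j\ne i}z_j^{-2}=0$. So the Hessian of your $F$ at $0$ is $-(d-1)I$, and $z=0$ is even a strict local maximum of $F$. Yet $U_{4,i}(\omega_d)\gtrsim\varepsilon^{-4}$.

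Route (1) gives nothing either. For a M\"obius map $M$ one has $\|Mp-Mq\|^2=\lambda_M(p)\lambda_M(q)\|p-q\|^2$, with $\lambda_M$ the spherical derivative. Hence $\Energy(M\omega_d)=\Energy(\omega_d)-(d-1)\sum_j\log\lambda_M(\hat z_j)$, which is a sum of one-body terms. Concretely, along $z\mapsto e^{\varepsilon}z$ every $\log|z_j-z_k|$ shifts by exactly $\varepsilon$, and the normalizing terms $-\frac{d-1}{2}\log(1+e^{2\varepsilon}|z_j|^2)$ are concave in $\varepsilon$. The second-order condition along this direction is therefore automatically satisfied, and no pair distance, let alone $U_{4,i}$, enters it.

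The missing idea is the \emph{mixed} second variation, in which $\hat z_i$ and its neighbors move simultaneously. The paper parametrizes all nodes at once by $[v_j+\zeta_jw_j]$, $\zeta\in\C^d$. It sets $G(\zeta)=\prod_{j<k}\det(v_j+\zeta_jw_j,v_k+\zeta_kw_k)$, so that $\Phi(\zeta)=\log|G(\zeta)|-\frac{d-1}{2}\sum_j\log(1+|\zeta_j|^2)$ has a local maximum at $0$. Let $A=(\partial_{\zeta_j}\partial_{\zeta_k}\log G(0))$, a complex symmetric matrix. The chain of consequences is:
\begin{itemize}
\item local maximality gives $\operatorname{Re}(\zeta^TA\zeta)\le(d-1)\|\zeta\|^2$;
\item a phase rotation upgrades this to $|\zeta^TA\zeta|\le(d-1)\|\zeta\|^2$;
\item polarization gives $|u^TAv|\le d-1$ for unit vectors $u,v$;
\item taking $v=e_i$ and $u=\overline{Ae_i}/\|Ae_i\|$ gives $\|Ae_i\|\le d-1$.
\end{itemize}
Your condition $\bigl|\sum_{j\ne i}z_j^{-2}\bigr|\le d-1$ is only the diagonal entry, $|A_{ii}|\le d-1$. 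The off-diagonal entries are $A_{ij}=\det(v_i,v_j)^{-2}$, of modulus exactly $4/\|\hat z_i-\hat z_j\|^2$. Dropping $A_{ii}$ gives $16\,U_{4,i}(\omega_d)\le\|Ae_i\|^2\le(d-1)^2$.

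This is precisely the squared-norm form you anticipated. However, it comes from a column of the full $d\times d$ Hessian, realized by a test deformation that moves each $\hat z_j$ by an amount proportional to $\overline{A_{ji}}$, i.e.\ of size $\|\hat z_i-\hat z_j\|^{-2}$. Neither a single-node deformation nor a global conformal deformation can see these entries.
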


\nopagebreak[4]
\noindent
See the \hyperref[proof:local-minimum-inverse-fourth]{proof}
on page~\pageref{proof:local-minimum-inverse-fourth}.
\end{samepage}

The separation bound is strictly stronger than Dragnev's for
$d>2$, but its leading constant is still $2$. Combining
\cref{eq:local-minimum-inverse-fourth} with the covering estimate
gives a uniform improvement of this constant for Fekete configurations.

\begin{corollary}\label{cor:fekete-separation}
There is a universal constant $\alpha>2$ such that every elliptic
Fekete configuration satisfies
\begin{equation*}
\delta(\omega_d)\ge\frac{\alpha}{\sqrt d},\qquad d\ge2.
\end{equation*}
\end{corollary}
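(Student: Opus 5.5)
The plan is to argue by contradiction, combining the inverse-fourth-power bound \cref{eq:local-minimum-inverse-fourth} with the covering estimate of \cref{th:covering-radius-fekete}. Fix a Fekete configuration and suppose $\delta(\omega_d)=\|\hat z_i-\hat z_j\|=\beta/\sqrt d$ for some $\beta$ close to $2$. The closest pair alone contributes $d^2/\beta^4$ to $U_{4,i}$, while \cref{eq:local-minimum-inverse-fourth} gives $U_{4,i}\le (d-1)^2/16<d^2/16$. So $\beta\ge 2$ follows already. To gain a fixed margin, we show that the remaining neighbours of $\hat z_i$ add at least $\eta d^2$ to $U_{4,i}$, with $\eta>0$ universal. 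Then
\[
\frac{d^2}{\beta^4}+\eta d^2\le\frac{d^2}{16},
\]
which forces $\beta^{-4}\le 1/16-\eta$, that is $\beta\ge\alpha:=(1/16-\eta)^{-1/4}>2$.

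The key step is to produce this extra mass from other points at distance $O(d^{-1/2})$ from $\hat z_i$. Each such point at distance $\le K/\sqrt d$ contributes at least $d^2/K^4$. It therefore suffices to find one point $\hat z_k\neq\hat z_j$ with $\|\hat z_i-\hat z_k\|\le K/\sqrt d$, where $K$ is universal. The covering estimate gives this directly. Take $C$ from \cref{th:covering-radius-fekete} and choose a point $p$ with $\|p-\hat z_i\|=3C/\sqrt d$ lying on the side of $\hat z_i$ opposite to $\hat z_j$. Such $p$ exists for $d$ large, since all radii involved are small.

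Then $\|p-\hat z_j\|\ge 3C/\sqrt d$, and $\|p-\hat z_i\|=3C/\sqrt d$. Every point within $C/\sqrt d$ of $p$ is therefore neither $\hat z_i$ nor $\hat z_j$. By the covering bound $\rho\le C/\sqrt d$, such a point $\hat z_k$ exists, and it satisfies $\|\hat z_i-\hat z_k\|\le 4C/\sqrt d$. Hence
\[
U_{4,i}\ge \frac{d^2}{\beta^4}+\frac{d^2}{256C^4}.
\]
This gives the claim with $\eta=1/(256C^4)$, up to harmless care with chordal versus geodesic distances on small caps.

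It remains to handle small $d$. For the finitely many $d$ below the threshold at which the caps above fit, I would use \cref{eq:local-minimum-separation}: since $d^2-3d+3<d^2$ for $d>1$, it gives $\delta\sqrt d>2$ strictly for each fixed $d\ge2$. For instance, $d=2$ gives $\delta=2$ and $\delta\sqrt d=2\sqrt2$. The minimum over finitely many $d$ is then still $>2$, and taking the smaller of the two constants yields a universal $\alpha>2$.

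The main obstacle is making the geometric step uniform: placing $p$ "opposite" to $\hat z_j$ so that the cap around $p$ provably excludes both $\hat z_i$ and $\hat z_j$, for all $d$ beyond an explicit threshold. Taking the radius $3C/\sqrt d$ rather than something tight should leave enough slack.
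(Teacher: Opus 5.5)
Your proposal is correct and follows essentially the paper's proof. Like the paper, you combine the inverse-fourth-power bound \cref{eq:local-minimum-inverse-fourth} with the covering estimate to find a third node $\hat z_k\ne\hat z_i,\hat z_j$ at distance $O(d^{-1/2})$ from $\hat z_i$, whose contribution to $U_{4,i}$ pushes the constant strictly above $2$, and you treat the finitely many small degrees with \cref{eq:local-minimum-separation}. The only difference is cosmetic: the paper locates the auxiliary point $p$ by an area comparison (the cap $B_{\rm ch}(\hat z_i,2R)$ has normalized area $R^2$, while the two radius-$R$ caps about $\hat z_i,\hat z_j$ have total area at most $R^2/2$), which gives $\|\hat z_i-\hat z_k\|\le 3R$; your ``opposite side'' placement gives $4R$ instead, and it is easily made rigorous using $\|p-\hat z_j\|^2-\|p-\hat z_i\|^2=2\langle p,\hat z_i-\hat z_j\rangle$.
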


\nopagebreak[4]
\noindent
See the \hyperref[proof:fekete-separation]{proof}
on page~\pageref{proof:fekete-separation}.

This answers the question raised in \cite[Remark~3]{Dragnev2002}
of improving the asymptotic separation constant $2$, without
identifying the optimal constant. To our knowledge, the
inverse-fourth-power estimate and these separation improvements
have not appeared in the literature.

We next integrate the localized counting error against a Riesz kernel. This is a standard method; see G\"otz \cite[Proposition~7]{Gotz2003} for the radial counting representation of discrete Riesz energy. Keeping the radius dependence in \cref{eq:fekete-local-count} gives the nodewise estimates below. Their global counterpart has a sharper remainder for $0<s\le1$: it uses the logarithmic-energy and inverse-fourth-power bounds through the positive definite kernels of \cref{lem:log-to-riesz-transfer}, rather than summing the nodewise error bounds.

\begin{theorem}\label{thm:fekete-riesz-potentials}
Let $\omega_d=\{\hat{z}_1,\ldots,\hat{z}_d\}\subset\Sphere^2$ be an elliptic Fekete configuration, $d\ge2$. For each fixed $0<s<2$, define
\[
\mathcal R_s(d):=
\begin{cases}
\sqrt d,&0<s<1,\\
\sqrt d\,\log d,&s=1,\\
d^{s/2},&1<s<2.
\end{cases}
\]
Then, uniformly for $1\le i\le d$,
\begin{equation}\label{eq:fekete-local-riesz-subcritical}
U_{s,i}(\omega_d)=\frac{2^{1-s}}{2-s}\,d+O_s\bigl(\mathcal R_s(d)\bigr),
\qquad 0<s<2,
\end{equation}
and at the critical exponent,
\begin{equation}\label{eq:fekete-local-riesz-two}
U_{2,i}(\omega_d)=\frac14\,d\log d+O(d).
\end{equation}
The global energies satisfy
\begin{align}
E_s(\omega_d)&=\frac{2^{1-s}}{2-s}\,d^2
+O_s\bigl(d^{1+s/2}\bigr),&&0<s<2,
\label{eq:fekete-riesz-subcritical}\\
E_2(\omega_d)&=\frac14\,d^2\log d+O(d^2).
\label{eq:fekete-riesz-two}
\end{align}
The implicit constants are independent of $d$, the node $i$,
and the Fekete configuration. For $0<s<2$, they may depend on $s$.
\end{theorem}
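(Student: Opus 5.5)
We write $U_{s,i}$ as a Stieltjes integral against the radial counting function and compare it with the continuous potential. Fix a node $q=\hat{z}_i$. Set $N(t):=\#(\omega_d\cap B_{\rm ch}(q,t))-1$ (excluding $q$ itself) and $N_0(t):=d\,t^2/4$. Then, for $0<s\le 2$,
\[
U_{s,i}(\omega_d)=\int_{\delta^-}^{2} t^{-s}\,\mathrm dN(t),
\qquad
d\int_{\Sphere^2}\frac{\mathrm d\sigma(p)}{\|p-q\|^s}=\int_0^2 t^{-s}\,\mathrm dN_0(t),
\]
where $\delta=\delta(\omega_d)$. Chordal caps are geodesic caps with $r\asymp t$. \Cref{thm:fekete-cap-discrepancy} therefore gives $|N(t)-N_0(t)|\le C(1+t\sqrt d)$ for all $t$. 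By \cref{cor:fekete-separation}, or already \cref{th:local-minimum-inverse-fourth}, we have $N(t)=0$ for $t<\delta$, with $\delta\ge c/\sqrt d$.

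We integrate by parts on $[\tau,2]$ with the cutoff $\tau:=c/\sqrt d<\delta$. Both counting functions agree at $t=2$ up to the $-1$, which contributes $O(1)$. This gives
\[
U_{s,i}-\frac{2^{1-s}}{2-s}d
=-\int_0^{\tau}t^{-s}\,\mathrm dN_0
+s\int_\tau^2 t^{-s-1}\bigl(N(t)-N_0(t)\bigr)\,\mathrm dt+O(1).
\]
The first term equals $-\frac d2\,\tau^{2-s}/(2-s)=O_s(d^{s/2})$. The second term is bounded by
\[
C s\int_{\tau}^2 t^{-s-1}(1+t\sqrt d)\,\mathrm dt\lesssim_s \tau^{-s}+\sqrt d\int_\tau^2 t^{-s}\,\mathrm dt .
\]
Here $\tau^{-s}\asymp d^{s/2}$. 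The remaining integral is $O(1)$ for $s<1$, $O(\log d)$ for $s=1$, and $O(\tau^{1-s})=O(d^{(s-1)/2})$ for $s>1$. Together these give exactly $\mathcal R_s(d)$, which is \cref{eq:fekete-local-riesz-subcritical}, uniformly in $i$. For $s=2$ the same computation applies, with the truncated continuous integral $\int_\tau^2 t^{-2}\,\mathrm dN_0=\frac d2\log(2/\tau)=\frac14 d\log d+O(d)$. The error terms are then $\tau^{-2}\asymp d$ and $\sqrt d\,\tau^{-1}\asymp d$, which gives \cref{eq:fekete-local-riesz-two}. Summing over $i$ immediately yields \cref{eq:fekete-riesz-two}, and also \cref{eq:fekete-riesz-subcritical} for $1<s<2$, where $d\,\mathcal R_s=d^{1+s/2}$.

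The main obstacle is the global bound \cref{eq:fekete-riesz-subcritical} for $0<s\le1$. There, summing the nodewise errors only gives $d^{3/2}$ (times $\log d$), not $d^{1+s/2}$. The plan is the transfer mentioned before the statement. Write $\frac{2^{1-s}}{2-s}d^2-E_s$ as a difference between the discrete and continuous energies, and expand the kernel $t^{-s}$ in Legendre polynomials. Compare its coefficients with those of $-\log t$, whose coefficients are $\asymp n^{-1}$, whereas those of $t^{-s}$ are $\asymp n^{s-1}$. Split the kernel at spectral level $n\approx\sqrt d$, or equivalently mollify at spatial scale $\varepsilon\asymp d^{-1/2}$. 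The smooth low-frequency part has nonnegative coefficients bounded by $n^{s}$ times the logarithmic ones up to level $\sqrt d$. Its discrepancy is therefore controlled by $d^{s/2}$ times the logarithmic-energy discrepancy $\Energy(\omega_d)-\kappa d^2+\frac12 d\log d=O(d)$ from \cref{eq:asymp_min_energy}, which gives $O(d^{1+s/2})$. The positivity of the coefficient difference supplies the sign needed for the upper bound, and the lower bound is analogous using the continuous minimizer. The singular short-range remainder, supported on pairs at distance $\lesssim d^{-1/2}$, is bounded through $U_{4,i}\le(d-1)^2/16$ from \cref{th:local-minimum-inverse-fourth}: by Hölder, $\sum_{\|\hat z_i-\hat z_j\|\lesssim d^{-1/2}}\|\hat z_i-\hat z_j\|^{-s}\lesssim d^{s/2}$ per node, since such pairs number $O(1)$ per node. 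Making this splitting precise with positive-definite kernels is the delicate step. If it fails, I would fall back to applying the nodewise bound only for $1<s<2$.
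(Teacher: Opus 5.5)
\textbf{Verdict: there is a genuine gap, in the global bound for $0<s\le1$.}

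\textbf{What matches the paper.} Your nodewise argument is the paper's argument. It uses the radial counting function, the Lev--Ortega-Cerd\`a bound $|N_i(t)-dt^2/4|\lesssim 1+t\sqrt d$, a cutoff just below the separation scale $d^{-1/2}$, and integration by parts. You drop the boundary term $\tau^{-s}N_0(\tau)=d\tau^{2-s}/4$, but it is $O(d^{s/2})$ and harmless. Summing over nodes then gives \cref{eq:fekete-riesz-two}, and \cref{eq:fekete-riesz-subcritical} for $1<s<2$, exactly as in the paper.

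\textbf{Where it fails.} For \cref{eq:fekete-riesz-subcritical} with $0<s\le1$, summing nodewise errors only yields $O(d^{3/2})$, resp.\ $O(d^{3/2}\log d)$, so your fallback does not prove the statement. The spectral plan breaks at the step you call delicate, in two places.

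First, write $-\log\|p-q\|=\kappa+\sum_{n\ge1}c_n^{\log}P_n(\langle p,q\rangle)$ with $c_n^{\log}=\frac{2n+1}{2n(n+1)}$. To reach ``$d^{s/2}$ times the logarithmic-energy discrepancy'' you must bound $\sum_{1\le n\le\sqrt d}c_n^{\log}\sum_{i,j}P_n(\langle\hat z_i,\hat z_j\rangle)$ by $O(d)$. The only input is $\Energy(\omega_d)\le\kappa d^2-\frac12d\log d+O(d)$. That forces the discarded off-diagonal tail $\sum_{i\ne j}\sum_{n>\sqrt d}c_n^{\log}P_n(\langle\hat z_i,\hat z_j\rangle)$ to be $\ge -O(d)$. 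Legendre tails have no sign: for $d^{-1/2}\lesssim\theta\le\pi/2$ they oscillate with envelope of order $(\sqrt d\,\theta)^{-3/2}$. Summed in absolute value over all pairs, that envelope is of order $d^{5/4}$, not $d$.

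Second, the high-frequency (or mollification) remainder of the Riesz kernel is not supported on pairs at distance $\lesssim d^{-1/2}$. It has a far-field part at every distance, which your H\"older bound over the $O(1)$ nearest neighbours does not touch.

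\textbf{The missing idea} is the content of \cref{lem:log-to-riesz-transfer}: expand both kernels in powers of $x=(1+\langle p,q\rangle)/2=1-\|p-q\|^2/4$ rather than in Legendre polynomials.
\begin{itemize}
\item Each $x^k=\langle v(p)^{\otimes k},v(q)^{\otimes k}\rangle$, with $v(p)=(1,p)/\sqrt2$, is positive definite, so the moment discrepancies satisfy $B_k\ge0$.
\item Each $x^k$ is also pointwise nonnegative. So truncating $-\log(1-x)=\sum_kx^k/k$ at $k=d$ discards a nonnegative tail, and the energy bound gives exactly $\sum_{k\le d}B_k/k\le d\,\eta_{d,d}(A)=O(d)$; see \cref{eq:positive-kernel-moment-bound}.
\item Write $(1-x)^{-s/2}=\sum_kb_kx^k$. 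Monotonicity of $kb_k$ is your coefficient comparison, now with the needed sign: $\sum_{k\le d}b_kB_k\le db_d\sum_{k\le d}B_k/k=O(d^{1+s/2})$.
\item The Riesz tail satisfies $\sum_{k>d}b_kx^k\le b_{d+1}x^{d+1}/(1-x)$. Since $x^{d+1}\le e^{-d\|p-q\|^2/4}$, it is controlled at all distances by $E_4$. This gives $T_d\le\frac{4}{ed}E_4(\omega_d)\le(d-1)^2/(4e)$ via \cref{eq:local-minimum-inverse-fourth}, which is \cref{eq:inverse-fourth-tail-bound}.
\item The matching lower bound follows from the same expansion by discarding the nonnegative terms.
\end{itemize}

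\textbf{A possible repair of your route.} A smooth mollifier with a one-sided comparison could work. For instance, use heat-kernel smoothing at time $t\asymp1/d$. Since $\Delta_{\Sphere^2}\log\|p-q\|=-\frac12$ off the diagonal, the smoothed logarithmic kernel exceeds the true one by at most $t/2$. Combine this with a far-field bound via $\sum_{i\ne j}\|\hat z_i-\hat z_j\|^{-s-2}\lesssim d^{2+s/2}$, which follows from separation. None of these estimates appears in your proposal.
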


\nopagebreak[4]
\noindent
See the \hyperref[proof:fekete-riesz-potentials]{proof}
on page~\pageref{proof:fekete-riesz-potentials}.

\begin{remark}\label{rem:riesz-scope}
For $0<s<2$, \cref{eq:fekete-local-riesz-subcritical} gives convergence of $d^{-1}U_{s,i}$ to the continuous potential in \cref{eq:continuous-riesz-potential}, uniformly in the node. The logarithm in the error at $s=1$ comes from estimating the counting error by its absolute value; no optimality of that remainder is claimed.  
\end{remark}

To state the second-order bounds, define the normalized remainder
\begin{equation*}
R_s(\omega_d):=
\begin{cases}
\displaystyle
\frac{E_s(\omega_d)-\frac{2^{1-s}}{2-s}d^2}{d^{1+s/2}},
&0<s<2,\\[6pt]
\displaystyle
\frac{E_2(\omega_d)-\frac14d^2\log d}{d^2},&s=2.
\end{cases}
\end{equation*}
Let $\gamma$ denote Euler's constant and, for $A\in\R$, set
\begin{equation*}
\eta(A):=2A+\gamma+1-2\log2.
\end{equation*}
For $0<s<2$ and $B>0$, write $a=s/2$ and define
\begin{equation}\label{eq:transfer-second-order-constants}
\begin{aligned}
c_s^-&:=-\frac{2^{-s}}{\Gamma(a)a(1-a)},\\
c_s^+(A,B)&:=c_s^-+
\frac{2^{-s}}{\Gamma(a)}\bigl(\eta(A)+4B\bigr).
\end{aligned}
\end{equation}
For $t\ge1$, let $J(t):=\int_t^\infty e^{-u}/u\,\mathrm{d}u$ and set
\begin{equation}\label{eq:critical-transfer-function}
\Phi_A(t):=\frac14\Bigl[\gamma+\log t+1
+t\bigl(\eta(A)+\log t-2\bigr)+e^{-t}-tJ(t)\Bigr].
\end{equation}

\begin{corollary}\label{cor:fekete-riesz-second-order}
Let $\{\omega_d\}_{d\in\mathcal D}$ be a family of elliptic
Fekete configurations. For each fixed $0<s<2$,
\begin{align}
\liminf_{d\to\infty}
R_s(\omega_d)
&\ge c_s^-,\label{eq:fekete-second-order-lower}\\
\limsup_{d\to\infty}
R_s(\omega_d)
&\le c_s^+\!\left(C_{\rm conj},\frac1{4e}\right)<0.
\label{eq:fekete-second-order-upper}
\end{align}
At $s=2$,
\begin{align}
\liminf_{d\to\infty}
R_2(\omega_d)
&\ge\frac{\gamma-1}{4},\label{eq:fekete-critical-second-order-lower}\\
\limsup_{d\to\infty}
R_2(\omega_d)
&\le\inf_{t\ge1}\Phi_{C_{\rm conj}}(t)
\le c_{\rm up}<0,\label{eq:fekete-critical-second-order-upper}
\end{align}
where $c_{\rm up}:=\Phi_{C_{\rm conj}}(4/3)
=-0.054966\ldots$.
All bounds are independent of the chosen Fekete family.
\end{corollary}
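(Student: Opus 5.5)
The plan is to prove both bounds through the subordination identity
\[
r^{-s}=\frac{1}{\Gamma(a)}\int_0^\infty t^{a-1}e^{-tr^2}\,\mathrm{d}t,\qquad a=\frac s2,
\]
which reduces everything to Gaussian kernels $e^{-t\|p-q\|^2}$. These kernels are positive definite on $\Sphere^2$, and this is the mechanism behind \cref{lem:log-to-riesz-transfer}. Write $G_d(t):=\sum_{i\ne j}e^{-t\|\hat z_i-\hat z_j\|^2}$ and $g(t):=\int_{\Sphere^2}e^{-t\|p-q\|^2}\,\mathrm{d}\sigma(p)=(1-e^{-4t})/(4t)$. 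Positive definiteness gives the nonnegative defect
\[
\Delta_d(t):=G_d(t)+d-d^2g(t)\ge0 .
\]
Then $E_s(\omega_d)-\frac{2^{1-s}}{2-s}d^2$ is $\Gamma(a)^{-1}\int_0^\infty t^{a-1}(G_d(t)-d^2g(t))\,\mathrm{d}t$, with the terms regrouped suitably near $t=\infty$.

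\emph{Lower bound.} Use $G_d(t)\ge\max\{d^2g(t)-d,\,0\}$ and cut at $T=d/4$.
\begin{itemize}
\item For $t\le T$, keep the bound $-d$.
\item For $t>T$, use $G_d\ge0$, so the deficit is at most $d^2g(t)\le d^2/(4t)$.
\end{itemize}
This gives
\[
R_s\ge-\Gamma(a)^{-1}d^{-1-a}\Bigl(\frac{dT^a}{a}+\frac{d^2T^{a-1}}{4(1-a)}\Bigr)=-\frac{2^{-s}}{\Gamma(a)a(1-a)}=c_s^-,
\]
uniformly in $d$, which is \cref{eq:fekete-second-order-lower}. At $s=2$ the same splitting is done with $t^{a-1}=1$, regularized by subtracting the continuous part. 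The $\log t$ integrals then produce $\gamma$ through $\int(1-e^{-u})/u$-type terms, and I expect the constant $(\gamma-1)/4$.

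\emph{Upper bound.} Apply the same representation to the logarithm:
\[
-\log r=\frac12\int_0^\infty\frac{e^{-tr^2}-e^{-t}}{t}\,\mathrm{d}t .
\]
This writes $\Energy(\omega_d)-\kappa d^2+\frac12 d\log d$ as $\frac12\int_0^\infty\Delta_d(t)\,\mathrm{d}t/t$ plus explicit terms of order $d$. Those explicit terms are where $\gamma+1-2\log 2$ enters $\eta(A)$. By \cref{eq:asymp_min_energy} and $C_{\log}\le C_{\rm conj}$, this yields
\[
\int_0^\infty\frac{\Delta_d(t)}{t}\,\mathrm{d}t\le\bigl(\eta(C_{\rm conj})+o(1)\bigr)\,d+\text{(explicit)} .
\]
The Riesz excess is $\Gamma(a)^{-1}\int t^{a-1}(\Delta_d-d)$.
\begin{itemize}
\item On $t\le T=d/4$, use $t^{a-1}=t^a\cdot t^{-1}\le T^a t^{-1}$ together with $\Delta_d\ge0$. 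This bounds the region by $T^a\int\Delta_d/t$, which is where $\eta(A)$ enters with the factor $2^{-s}$.
\item On $t>T$, bound $G_d(t)$ by close-pair control: $e^{-x}\le(ex)^{-1}$, or a squared variant, combined with $U_{4,i}\le(d-1)^2/16$ from \cref{th:local-minimum-inverse-fourth}. This bounds the large-$t$ contribution by a multiple $4B$ with $B=1/(4e)$.
\end{itemize}
Collecting terms gives $c_s^+(C_{\rm conj},1/(4e))$. For $s=2$, keep the cutoff $T=td/4$ as a free parameter $t\ge1$ instead of fixing it. The bookkeeping, with $J(t)$ coming from $\int_t^\infty e^{-u}/u$, should reproduce $\Phi_{C_{\rm conj}}(t)$; the bound then follows by minimizing over $t$, and evaluating at $t=4/3$ gives $c_{\rm up}$.

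\emph{Main obstacle.} I expect the main obstacle to be the exact bookkeeping in the large-$t$ regime. There the close-pair bound must be sharp enough that the total $c_s^+$ is negative, so the choice of elementary inequality (in $x$, $x^2$, or an optimized combination) and of the cutoff matters. Strict negativity of $c_s^+$ for every $0<s<2$ then reduces to checking $\eta(C_{\rm conj})+1/e<1/(a(1-a))$. The right-hand side is at least $4$, while the left side is numerically about $\gamma+1-2\log2+2C_{\rm conj}+e^{-1}\approx0.44$, so the check is immediate. The value $\Phi_{C_{\rm conj}}(4/3)<0$ is a direct numerical evaluation.
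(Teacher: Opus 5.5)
Your route differs from the paper's, and most of it works. The paper expands both kernels in powers of $x_{ij}=1-r_{ij}^2/4$, with $r_{ij}=\|\hat z_i-\hat z_j\|$. It uses the positive definite kernels $x^k$ with moment defects $B_k\ge0$ and truncates sharply at degree $m=d$, or $m=\lfloor td\rfloor$ at $s=2$. You use Gaussian subordination with a continuous cutoff instead. Write $\tau$ for the Gaussian parameter, to avoid the clash with the free parameter $t$. The two schemes are linked by $\Delta_d(\tau)=e^{-4\tau}\sum_{k\ge0}\frac{(4\tau)^k}{k!}B_k$, a Poisson average of the paper's $B_k$, so a cutoff $T=m/4$ plays the role of degree $m$.

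What your version buys is that it reproduces the paper's constants without Stirling or harmonic-number asymptotics, and non-asymptotically:
\begin{itemize}
\item For $0<s<2$, the cut at $T=d/4$ gives $R_s(\omega_d)\ge c_s^-$ for every $d$ and every configuration.
\item At $s=2$ no regularization is needed, since $E_2=\int_0^\infty G_d\ge\int_0^{d/4}(d^2g-d)\,\mathrm{d}\tau=\frac{d^2}{4}\bigl(\log d+\gamma-1+J(d)\bigr)$.
\end{itemize}

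One statement must be repaired. The integral $\int_0^\infty\Delta_d(\tau)\,\mathrm{d}\tau/\tau$ diverges, because $\Delta_d(\tau)\to d$ as $\tau\to\infty$. The correct form is the exact identity
\[
\int_0^T\frac{\Delta_d(\tau)}{\tau}\,\mathrm{d}\tau
=2\bigl(\Energy(\omega_d)-\kappa d^2\bigr)+d(\log T+\gamma)
+d^2\int_T^\infty\frac{g(\tau)}{\tau}\,\mathrm{d}\tau
-\int_T^\infty\frac{G_d(\tau)}{\tau}\,\mathrm{d}\tau .
\]
With $T=td/4$ and the logarithmic hypothesis, this gives $\int_0^T\Delta_d/\tau\le d\,(2A+\gamma-2\log2+\log t+1/t)-\int_T^\infty G_d/\tau$. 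At $t=1$ this is exactly $\eta(A)d$.

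For $0<s<2$, your large-$\tau$ bound gives $\int_T^\infty\tau^{a-1}G_d\le T^{a-2}E_4(\omega_d)/e\le2^{-s}d^{1+a}/e$. This is the term $4B$ with $B=1/(4e)$, so you do obtain $c_s^+(C_{\rm conj},1/(4e))$.

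The genuine gap is the critical upper bound. The term $-tJ(t)$ in $\Phi_A(t)$ does not come out of the scheme you describe. In that scheme, $\int_0^T\Delta_d$ is bounded by $T\int_0^T\Delta_d/\tau$ after discarding $-\int_T^\infty G_d/\tau$, and $\int_T^\infty G_d=\sum_{i\ne j}e^{-Tr_{ij}^2}/r_{ij}^2$ is bounded separately by close-pair control. From $E_4\le d(d-1)^2/16$ and $Tr_{ij}^2>t$, that yields at best $\Phi_A(t)+\frac t4J(t)$. Its infimum over $t\ge1$ at $A=C_{\rm conj}$ is about $-0.018$, not $c_{\rm up}$.

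Your remark that $J$ comes from $\int_t^\infty e^{-u}/u$ points the right way, because $\int_T^\infty e^{-\tau r^2}\,\mathrm{d}\tau/\tau=J(Tr^2)$ is the logarithmic tail. But it contributes only if you keep it. That is the missing idea, and it is the content of \cref{lem:local-minimum-critical-transfer}. Since
\[
E_2\le\frac{d^2}{4}\bigl(\log(td)+\gamma+J(td)\bigr)-\frac{td^2}{4}+T\int_0^T\frac{\Delta_d}{\tau}\,\mathrm{d}\tau+\int_T^\infty G_d\,\mathrm{d}\tau,
\]
inserting the identity above without dropping its last term produces, pair by pair,
\[
\int_T^\infty\Bigl(1-\frac{T}{\tau}\Bigr)e^{-\tau r^2}\,\mathrm{d}\tau
=\frac{F(Tr^2)}{Tr^4},
\qquad
F(y)=ye^{-y}-y^2J(y)\ge0 .
\]
To finish, you need three facts:
\begin{itemize}
\item $F$ is nonincreasing on $[1,\infty)$; the paper derives this from $J(y)\ge e^{-y}/(y+1)$.
\item $Tr_{ij}^2\ge td/(d-1)>t\ge1$, by \cref{eq:local-minimum-inverse-fourth}.
\item $E_4\le d(d-1)^2/16$.
\end{itemize}
Together these bound the combined tail by $\frac{(d-1)^2}{4t}F\bigl(\frac{td}{d-1}\bigr)$. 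Collecting terms then gives exactly $\Phi_A(t)$, and this is also where the restriction $t\ge1$ is needed. With this step added, your Gaussian argument proves the whole corollary.
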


\nopagebreak[4]
\noindent
See the \hyperref[proof:fekete-riesz-second-order]{proof}
on page~\pageref{proof:fekete-riesz-second-order}.

The lower bounds follow from positivity of the kernels and hold
for arbitrary configurations. They recover the second-order constants
in Brauchart's bounds \cite[Theorems~3 and~4]{Brauchart2026}:
his subcritical constant satisfies
\[
-\frac{2^{1-s}}{(2-s)\Gamma(1+s/2)}
=-\frac{2^{-s}}{\Gamma(s/2)(s/2)(1-s/2)}=c_s^-.
\]
Brauchart's estimates also provide remainders
$O_s(d^{s/2-1})$ for $0<s<2$ and $O(1)$ for $s=2$.
Our upper bounds use the logarithmic-energy and
inverse-fourth-power estimates for Fekete configurations.

For $s>0$, denote the minimal Riesz energy by
\[
\mathcal E_s(d):=
\min_{\substack{\omega_d\subset\Sphere^2\\\#\omega_d=d}}
E_s(\omega_d).
\]
The preceding upper bounds also apply to this minimum.

\begin{proposition}\label{prop:minimal-riesz-upper-comparison}
For $0<s<2$, define
\[
C_{\rm F}(s):=c_s^+\!\left(C_{\rm conj},\frac1{4e}\right),
\qquad
C_{\rm TM}(s):=
2^{-s}\frac{s}{2}\left(1+\frac{s}{2}\right)
\Gamma\left(1-\frac{s}{2}\right)
\zeta\left(1-\frac{s}{2}\right),
\]
where $\zeta$ is the Riemann zeta function. Then
\begin{equation}\label{eq:minimal-riesz-second-order-upper}
\limsup_{d\to\infty}
\frac{\mathcal E_s(d)-\frac{2^{1-s}}{2-s}d^2}{d^{1+s/2}}
\le C_{\rm F}(s)<C_{\rm TM}(s).
\end{equation}
\end{proposition}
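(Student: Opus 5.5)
The plan has two parts. The first inequality in \cref{eq:minimal-riesz-second-order-upper} is immediate from \cref{cor:fekete-riesz-second-order}. The second is a pointwise comparison of two explicit functions of $s$ on $(0,2)$.

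For the first inequality, fix $0<s<2$ and, for each $d\ge2$, pick an elliptic Fekete configuration $\omega_d$; one exists by compactness of $(\Sphere^2)^d$. Since $\mathcal E_s(d)\le E_s(\omega_d)$, we have
\[
\frac{\mathcal E_s(d)-\frac{2^{1-s}}{2-s}d^2}{d^{1+s/2}}\le R_s(\omega_d).
\]
Taking $\limsup$ along $\mathcal D=\N$ and applying \cref{eq:fekete-second-order-upper} gives the bound $C_{\rm F}(s)$. The corollary is stated for an arbitrary Fekete family, so nothing more is needed here.

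For the strict inequality $C_{\rm F}(s)<C_{\rm TM}(s)$, write $a=s/2\in(0,1)$ and $\eta_0:=\eta(C_{\rm conj})+1/e$. Then
\[
C_{\rm F}(s)=\frac{2^{-s}}{\Gamma(a)}\left(\eta_0-\frac{1}{a(1-a)}\right).
\]
Next use $\Gamma(1-a)\Gamma(a)=\pi/\sin(\pi a)$ to write
\[
C_{\rm TM}(s)=\frac{2^{-s}}{\Gamma(a)}\,\frac{\pi a(1+a)\zeta(1-a)}{\sin\pi a}.
\]
Cancelling the common positive factor $2^{-s}/\Gamma(a)$, the claim becomes
\[
\eta_0-\frac1{a(1-a)}<\frac{\pi a(1+a)\zeta(1-a)}{\sin\pi a},\qquad 0<a<1.
\]
Note that $\zeta(1-a)<0$ on this range, and $\eta_0\approx 2(-0.0556)+0.5772+1-1.3863+0.3679\approx0.448$.

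The comparison is done by cases in $a$.

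Near $a=0$: the left side behaves like $-1/a$, while the right side tends to $\zeta(1)$-type behaviour. Indeed $\zeta(1-a)\sim-1/a$ and $\pi a/\sin(\pi a)\to1$, so the right side is $\approx-(1+a)/a+\gamma(1+a)+O(a)$. The difference, right minus left, then tends to $-1+\gamma-\eta_0+1=\gamma-\eta_0\approx0.129>0$. The expansion $\frac1{a(1-a)}=\frac1a+1+O(a)$ is used here, and the limit should be checked carefully, since the margin is small.

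Near $a=1$: the left side tends to $-\infty$, while the right side stays bounded, since $\sin\pi a\to0^+$ and $\zeta(0)=-1/2$ would make it tend to $-\infty$ as well. So one must compare rates. The left side is $\sim-1/(1-a)$. The right side is $\sim 2\cdot(-1/2)/(1-a)=-1/(1-a)$, because $\sin\pi a\sim\pi(1-a)$. The leading terms cancel, so the next-order terms are required. These come from $\zeta(0)=-\tfrac12$, $\zeta'(0)=-\tfrac12\log(2\pi)$, and the expansion of $1+a$.

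On a compact middle interval, I would use monotonicity or bounds on $\zeta(1-a)$. One route is the functional equation $\zeta(1-a)=2(2\pi)^{-a}\cos(\pi a/2)\Gamma(a)\zeta(a)$ together with elementary bounds such as $\zeta(a)\le \frac{1}{a-1}+1$ for $0<a<1$. A second route is a rigorous interval evaluation on a finite grid with derivative bounds.

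I expect the main obstacle to be the endpoint $a\to1$, together with a uniform middle-range verification. At $a\to1$ the leading singularities cancel exactly, so the sign is decided by a constant-order term. That term has to be computed and shown positive with margin. If the analytic bounds are too lossy in the middle range, I would fall back on a certified numerical check on a grid, with Lipschitz control of both sides.
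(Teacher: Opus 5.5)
Your first inequality and your reduction of $C_{\rm F}(s)<C_{\rm TM}(s)$ to $\Psi(a):=\frac1{a(1-a)}+\frac{\pi a(1+a)}{\sin\pi a}\zeta(1-a)>\eta_0$ for $0<a<1$ are exactly the paper's. The gap is that this inequality is never actually proved, and the tools you name would not prove it.

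\begin{itemize}
\item \emph{Endpoints.} Your expansions give only the limits $\Psi(0^+)=\gamma\approx0.577$ and $\Psi(1^-)=\frac52-\log(2\pi)\approx0.662$. Without explicit remainder bounds they give no explicit neighbourhoods. A Lipschitz grid check of ``both sides'' cannot take over there, because each side is unbounded near the endpoints and only $\Psi$ extends continuously.
\item \emph{Middle range.} The bound $\zeta(a)\le\frac1{a-1}+1$ points the wrong way. In the functional equation the prefactor is positive and $\zeta(a)<0$, so you need a \emph{lower} bound on $\zeta$.
\item \emph{The crude lower bound fails.} The matching elementary bound $\zeta(q)\ge\frac1{q-1}$ is too lossy. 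Applied to $\zeta(1-a)$, it gives a lower bound for $\Psi$ that tends to $0$ as $a\to0$. Applied to $\zeta(a)$ through the functional equation, it loses exactly $\gamma$ as $a\to1$, leaving about $0.085$. Both are below $\eta_0\approx0.448$.
\item \emph{The constant.} The value $\eta_0\approx0.448$ is a decimal estimate, not a certified bound.
\end{itemize}

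The missing idea is to separate the two sides by the constant $\frac12$, using bounds sharp enough to keep the constant term of $\zeta$ near its pole. Euler--Maclaurin summation with the $B_4$ remainder gives $\zeta(q)\ge\frac1{q-1}+\frac12+\frac q{16}$ for $0<q<1$. With $u=a(1-a)$, the product formula gives $p(a):=\frac{\sin\pi a}{\pi a(1-a)}=\prod_{n\ge1}\bigl(1+\frac u{n(n+1)}\bigr)\ge1+u+\frac{u^2}4$. Since $u\,p(a)\bigl(\Psi(a)-\frac12\bigr)=p(a)\bigl(1-\frac u2\bigr)+a(1+a)\zeta(1-a)$, these two bounds reduce $\Psi(a)>\frac12$ to the polynomial inequality $\frac{u(1+a)}{16}\bigl(1-2a(1-a)(2-a)\bigr)>0$. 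This holds on all of $(0,1)$ with no case split and no numerics. The other half, $\eta_0<\frac12$, is certified by the alternating series for $\log\Gamma(1/3)$, together with rational bounds for $\log(2\pi)$, $\log 3$, $\gamma$, $e^{-1}$, $\zeta(2)$ and $\zeta(3)$.
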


\nopagebreak[4]
\noindent
See the \hyperref[proof:minimal-riesz-upper-comparison]{proof}
on page~\pageref{proof:minimal-riesz-upper-comparison}.

The constant $C_{\rm TM}(s)$ is the upper second-order constant
in \cite[equation~(7) and Corollary~4.1]{deLaTorreMarzo2025}.
Thus \cref{eq:minimal-riesz-second-order-upper} strictly improves
that bound for every fixed $0<s<2$.
To our knowledge, $C_{\rm F}(s)$ is the best currently known
explicit upper bound for this normalized second-order remainder.
The underlying bound in \cref{cor:fekete-riesz-second-order}
holds uniformly over all elliptic Fekete configurations;
no convergence of the normalized remainder is asserted.

For example, at $s=1$ the lower and upper constants are
$-2/\sqrt\pi=-1.128379\ldots$ and
$c_1^+(C_{\rm conj},1/(4e))=-1.002116\ldots$.
At $s=2$, the upper estimate in particular gives
\[
E_2(\omega_d)\le\frac14d^2\log d+c_{\rm up}d^2+o(d^2),
\]
where the $o(d^2)$ term can be chosen independently of the Fekete configuration.
The constant conjectured for the minimal $2$-energy in
\cite[Conjecture~5]{BrauchartHardinSaff2012} satisfies the identity
proved in \cite[Theorem~1]{Brauchart2026}:
\begin{equation*}
C_{2,\rm conj}
=\frac{C_{\rm conj}+\gamma-\log2}{2}
=-0.085768\ldots.
\end{equation*}
Steinerberger \cite{Steinerberger2026} constructs configurations with $2$-energy at most $\frac14d^2\log d+C_{2,\rm conj}d^2+o(d^2)$.
That result does not assert logarithmic minimality, whereas \cref{cor:fekete-riesz-second-order} applies to every elliptic
Fekete family.

\begin{corollary}\label{cor:fekete-riesz-hypersingular}
Let $\omega_d\subset\Sphere^2$ be an elliptic Fekete
configuration with $d\ge2$. For each fixed $s>2$,
\[
U_{s,i}(\omega_d)\asymp_s d^{s/2},
\qquad 1\le i\le d,
\]
and
\[
E_s(\omega_d)\asymp_s d^{1+s/2}.
\]
The implicit constants depend only on $s$.
\end{corollary}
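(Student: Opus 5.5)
The plan is to prove the nodewise estimate $U_{s,i}(\omega_d)\asymp_s d^{s/2}$ first, and then sum over $i$ to obtain $E_s(\omega_d)\asymp_s d^{1+s/2}$. Both directions rest on two earlier inputs. The separation bound $\delta(\omega_d)\ge 2/\sqrt d$ gives the upper bound; we may use either \cref{cor:fekete-separation} or \cref{eq:local-minimum-separation}, since $(d^2-3d+3)^{1/4}\le\sqrt d$. The covering estimate of \cref{th:covering-radius-fekete} gives the lower bound, and the local counting of \cref{thm:fekete-cap-discrepancy} serves as an alternative for it.

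\emph{Lower bound.} Fix a node $\hat z_i$ and consider the cap $B_{\rm ch}(\hat z_i,t)$ with $t=K/\sqrt d$, for a large universal $K$. Apply \cref{eq:fekete-local-count} with $r\asymp t$. The cap has measure $t^2/4=K^2/(4d)$, so it contains at least $K^2/4-C(1+K')$ points. For $K$ large this is at least $2$, so the cap contains some $\hat z_j$ with $j\neq i$. Hence $U_{s,i}\ge t^{-s}=K^{-s}d^{s/2}$.

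A second route uses covering alone, for $d$ large. The cap $B_{\rm ch}(\hat z_i,3C/\sqrt d)$ contains a point $p$ with $\|p-\hat z_i\|=2C/\sqrt d$. Its nearest node lies within $C/\sqrt d$ of $p$, so that node is different from $\hat z_i$ and within $3C/\sqrt d$ of it. The finitely many small $d$ are handled trivially, since $U_{s,i}\ge 2^{-s}$.

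\emph{Upper bound.} We use a packing argument. By separation, the caps $B_{\rm ch}(\hat z_j,\delta/2)$ are pairwise disjoint, where $\delta\ge 2/\sqrt d$. Each has measure at least $1/(4d)$. Decompose the other nodes into dyadic shells
\[
A_k=\{j\neq i: 2^k\delta\le\|\hat z_i-\hat z_j\|<2^{k+1}\delta\},\qquad k\ge0.
\]
The disjoint caps around the nodes of $A_k$ all lie inside $B_{\rm ch}(\hat z_i,2^{k+2}\delta)$. Comparing areas gives
\[
\#A_k\lesssim \frac{4^k\delta^2}{\delta^2}=4^k .
\]
Since $\delta\gtrsim d^{-1/2}$, we get
\[
U_{s,i}\le\sum_{k\ge0}\#A_k\,(2^k\delta)^{-s}\lesssim \delta^{-s}\sum_{k\ge0}2^{(2-s)k}\lesssim_s d^{s/2}.
\]
The last series converges precisely because $s>2$. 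The number of shells is finite, since all distances are at most $2$, but convergence makes this irrelevant.

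The only delicate point is the area comparison for caps whose radii are comparable to the sphere. There we use $\sigma(B_{\rm ch}(q,t))=t^2/4$ for $t\le2$ and simply cap the shells at radius $2$. Summing the nodewise bounds over $i$ gives $E_s(\omega_d)\asymp_s d^{1+s/2}$.
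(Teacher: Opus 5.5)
Your argument is correct, and its upper bound takes a different route from the paper's.

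\textbf{The paper's route.} The paper reuses the machinery of \cref{thm:fekete-riesz-potentials}. It writes $U_{s,i}$ through the radial representation \cref{eq:riesz-radial-representation}. It then bounds the counting function by $N_i(t)\le C'dt^2$ for $t\ge\alpha/\sqrt d$; this combines the Lev--Ortega-Cerd\`a local count \cref{eq:radial-count-remainder} with separation. Finally it integrates against $t^{-s-1}$. Its lower bound uses the same count at radius $A/\sqrt d$, which is exactly your first lower-bound route.

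\textbf{A small omission.} The paper treats the case $A/\sqrt d>2$ explicitly: there every other node lies within $2$. Your first route tacitly needs $K/\sqrt d\le 2$ for the cap-area formula $t^2/4$, so that case should be mentioned there too. It is trivial, as your second route already notes.

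\textbf{What your route buys.} Your dyadic packing shows that the local counting theorem is unnecessary for the upper bound. Separation alone gives
\[
\#\{j\ne i:\|\hat z_i-\hat z_j\|\le t\}\lesssim dt^2,\qquad t\ge\delta,
\]
and hence $U_{s,i}\lesssim_s\delta^{-s}$ for $s>2$, for any configuration. Combined with \cref{eq:local-minimum-separation}, this yields $U_{s,i}\lesssim_s d^{s/2}$ and $E_s\lesssim_s d^{1+s/2}$ for every local logarithmic minimizer, not only for Fekete configurations. The Fekete property, through covering or local counting, is then needed only for the lower bound. Your covering route isolates that dependence cleanly and mirrors the argument in the proof of \cref{cor:fekete-separation}.

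\textbf{What the paper's route buys.} It is uniform in presentation: the single estimate \cref{eq:radial-count-remainder} drives the subcritical, critical and hypersingular cases alike.
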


\nopagebreak[4]
\noindent
See the \hyperref[proof:fekete-riesz-hypersingular]{proof}
on page~\pageref{proof:fekete-riesz-hypersingular}.


\subsection{Products of distances and integral estimates}\label{sec:products-integrals}

The next comparison is the link between the geometric estimates and conditioning. 

\begin{theorem}
\label{th:fekete-discrete-Id-Ii}
Let $\omega_d=\{\hat{z}_1,\ldots,\hat{z}_d\}\subset\Sphere^2$ be an elliptic
Fekete configuration, with $d\ge2$.
Then
\[
    I_d\asymp \frac{\sum_{j=1}^d P_j^2}{d^2},
\]
and, uniformly for $1\le i\le d$,
\[
    I_{d,i}\asymp
    \frac{P_i^2}{d}
    +\frac{1}{d^2}
      \sum_{\substack{1\le j\le d\\j\ne i}}
      \frac{P_j^2}{\|\hat{z}_i-\hat{z}_j\|^2}.
\]
All implicit constants are universal.
\end{theorem}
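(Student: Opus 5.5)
The plan is to compare $H_{\omega_d}(p)^2$ on each Voronoi cell $V_j$ with its value near the node $\hat z_j$, and then sum over cells. For $p\in V_j$ write
\[
H_{\omega_d}(p)=\|p-\hat z_j\|\prod_{k\ne j}\|p-\hat z_k\|.
\]
The key local claim is
\[
\prod_{k\ne j}\|p-\hat z_k\|\asymp P_j
\qquad\text{uniformly for } p\in V_j .
\]
By \cref{th:covering-radius-fekete}, $\|p-\hat z_j\|\le C/\sqrt d$ on $V_j$, so the claim amounts to bounding
\[
\sum_{k\ne j}\log\frac{\|p-\hat z_k\|}{\|\hat z_j-\hat z_k\|}
\]
by a universal constant. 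I would split this sum into near and far nodes.

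\emph{Far nodes}, $\|\hat z_j-\hat z_k\|\ge A/\sqrt d$. Expand the logarithm to second order in $p-\hat z_j$. The first-order term is $\langle p-\hat z_j, \nabla\sum_{k\ne j}\log\|\cdot-\hat z_k\|(\hat z_j)\rangle$. By the Fekete stationarity condition, this gradient is tangentially zero when summed over all $k\ne j$. The truncation to far nodes costs at most $\sum_{\text{near}}\|\hat z_j-\hat z_k\|^{-1}$ times $|p-\hat z_j|$, which is $O(1)$ by the counting estimate \cref{eq:fekete-local-count}. The normal component is harmless on the sphere. The second-order term is bounded by $d^{-1}\sum_{k}\|\hat z_j-\hat z_k\|^{-2}$ restricted to far nodes. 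Integrating the local count \cref{eq:fekete-local-count} against $t^{-2}$ gives that this is $O(1)$; compare the $U_2$ estimate \cref{eq:fekete-local-riesz-two}, where the cutoff at scale $A/\sqrt d$ removes the $\log d$.

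\emph{Near nodes}. There are $O(1)$ of them by \cref{eq:fekete-local-count}. For these I use separation, \cref{cor:fekete-separation} (or \cref{th:local-minimum-inverse-fourth}). Since $p\in V_j$, we have $\|p-\hat z_k\|\ge\|p-\hat z_j\|$, and $\|p-\hat z_k\|\ge \delta/2$. Combined with $\|\hat z_j-\hat z_k\|\asymp d^{-1/2}$ for near nodes, each ratio is bounded above and below. This gives the local claim. I expect this step to be the main obstacle: controlling the cancellation in the first-order term uniformly needs care at the cutoff, and I would try to use Fekete criticality exactly as in Dragnev's argument.

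With the claim in hand, integrate over the cell:
\[
\int_{V_j}H^2\,d\sigma\asymp P_j^2\int_{V_j}\|p-\hat z_j\|^2\,d\sigma .
\]
Each cell contains a cap of radius $\asymp\delta\asymp d^{-1/2}$ and is contained in a cap of radius $\rho\lesssim d^{-1/2}$. Hence this last integral is $\asymp d^{-1}\cdot d^{-1}=d^{-2}$. Summing over $j$ gives the first assertion,
\[
I_d\asymp d^{-2}\sum_j P_j^2 .
\]

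For $I_{d,i}$ the integrand is $H^2/\|p-\hat z_i\|^2$.

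On $V_i$, the claim gives $H_{\omega_d\setminus\{\hat z_i\}}\asymp P_i$, and $\sigma(V_i)\asymp 1/d$. This yields the term $P_i^2/d$.

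On $V_j$ with $j\ne i$, we have
\[
\|p-\hat z_i\|\ge \|p-\hat z_j\|\quad\text{and}\quad \|p-\hat z_i\|\asymp\|\hat z_i-\hat z_j\| .
\]
The second comparison holds because either $\hat z_j$ is far from $\hat z_i$ relative to $\rho$, or $\|\hat z_i-\hat z_j\|\asymp d^{-1/2}$ by separation and covering. Therefore
\[
\int_{V_j}\frac{H^2}{\|p-\hat z_i\|^2}\,d\sigma\asymp \frac{P_j^2}{\|\hat z_i-\hat z_j\|^2}\,d^{-2}.
\]
Summing over $j\ne i$ gives the second assertion, with universal constants throughout.
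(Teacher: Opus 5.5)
There is a genuine gap in the lower half of your key local claim. It sits in the second-order term for the far nodes, not in the first-order cancellation you flagged; that part is fine.

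You bound the second-order term by $\|p-\hat z_j\|^2\sum_{\rm far}\|\hat z_j-\hat z_k\|^{-2}\lesssim d^{-1}\sum_{\rm far}\|\hat z_j-\hat z_k\|^{-2}$. You then assert this is $O(1)$ because the cutoff at $A/\sqrt d$ ``removes the $\log d$''. It does not. The near nodes are $O(1)$ in number and lie at distance $\gtrsim d^{-1/2}$, so they contribute only $O(d)$ to $U_{2,j}(\omega_d)$. By \cref{eq:fekete-local-riesz-two}, the far part is therefore still $\frac14 d\log d+O(d)$.

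The logarithm comes from all scales between $d^{-1/2}$ and $1$: each dyadic shell contributes $\asymp d$. This is visible in the radial formula, which already starts at $a_d=\alpha/\sqrt d$. Hence your Taylor argument only gives $|\log(H_{\omega_d\setminus\{\hat z_j\}}(p)/P_j)|\lesssim \log d$ on $V_j$, i.e. comparability up to factors $d^{\pm C}$. The upper half is rescued for free by one-point extremality. The lower bounds in both asserted comparisons, however, then carry polynomial losses in $d$. Shrinking to $\|p-\hat z_j\|\le\varepsilon/\sqrt d$ does not help, since the same bound becomes $\varepsilon^2\log d$.

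The missing ingredient is cancellation in the quadratic term. Work in the chart with $\hat z_j=[1:0]$, and set $g(z)=\prod_{k\ne j}(1-c_kz)$, so that $g(0)=1$ and $H_{\omega_d\setminus\{\hat z_j\}}(p(z))/P_j=|g(z)|(1+|z|^2)^{-(d-1)/2}\le1$. Expanding $\log|g|$ at $0$ forces $\sum_k c_k=0$ and $|\sum_k c_k^2|\le d-1$. This holds even though $\sum_k|c_k|^2=4U_{2,j}(\omega_d)-(d-1)\asymp d\log d$. Using this holomorphic bound for the quadratic term, and $\sum_{\rm far}|c_k|^3\lesssim d^{3/2}$ for the cubic remainder, would rescue your claim on all of $V_j$.

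The paper avoids the issue more simply: it never needs comparability on the whole cell. Its upper bounds use only $H_{\omega_d\setminus\{\hat z_j\}}\le P_j$ on the whole sphere (\cref{lem:fekete-node-is-max}), together with $\|p-\hat z_j\|\le\rho(\omega_d)$ and $\sigma(V_j)\lesssim d^{-1}$. Its lower bounds integrate only over an annulus $\eta/(2\sqrt d)\le\|p-\hat z_j\|\le\eta/\sqrt d$ inside $V_j$. There \cref{lem:fekete-lagrange-local-stability} gives $H_{\omega_d\setminus\{\hat z_j\}}\ge aP_j$ via Cauchy estimates for $w\mapsto g(w/\sqrt{d-1})$, which is bounded by $e^{1/2}$ on the unit disk.

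With either repair, the rest of your bookkeeping goes through: the near nodes, $\|p-\hat z_i\|\asymp\|\hat z_i-\hat z_j\|$ on $V_j$, and the summation over cells.
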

\nopagebreak[4]
\noindent
See the \hyperref[proof:fekete-discrete-Id-Ii]{proof}
on page~\pageref{proof:fekete-discrete-Id-Ii}.

\begin{corollary}\label{cor:average-Ii-Id-log-gap}
For elliptic Fekete configurations with $d\ge2$,
\[
\sum_{i=1}^d I_{d,i}\asymp d\log d\,I_d,
\]
with universal constants. 
In particular, $I_{d,i}\lesssim I_d$ cannot hold uniformly in both $i$ and $d$.
\end{corollary}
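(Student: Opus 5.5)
We sum the second comparison of \cref{th:fekete-discrete-Id-Ii} over $i$ and compare the result with the first. Summing over $1\le i\le d$ gives, with universal constants,
\[
\sum_{i=1}^d I_{d,i}\asymp \frac{1}{d}\sum_{i=1}^d P_i^2+\frac{1}{d^2}\sum_{j=1}^d P_j^2\sum_{i\ne j}\frac{1}{\|\hat{z}_i-\hat{z}_j\|^2}
=\frac{1}{d}\sum_j P_j^2+\frac{1}{d^2}\sum_j P_j^2\,U_{2,j}(\omega_d).
\]
To pass to the second expression we exchange the order of summation; all terms are nonnegative.

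The key input is the nodewise critical estimate \cref{eq:fekete-local-riesz-two}. It gives $U_{2,j}(\omega_d)=\frac14 d\log d+O(d)$ uniformly in $j$ and in the Fekete configuration. Hence there is a universal $d_0$ such that $U_{2,j}\asymp d\log d$ uniformly in $j$ for $d\ge d_0$. Substituting this, the second term is $\asymp \frac{\log d}{d}\sum_j P_j^2$, and it dominates the first term $\frac1d\sum_jP_j^2$. Therefore
\[
\sum_i I_{d,i}\asymp \frac{\log d}{d}\sum_j P_j^2 \asymp d\log d\cdot\frac{\sum_jP_j^2}{d^2}\asymp d\log d\, I_d,
\]
where the last step is the first part of \cref{th:fekete-discrete-Id-Ii}.

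For the finitely many $2\le d<d_0$ we argue directly. Every Fekete configuration has separation $\delta\ge 2/\sqrt{d-1}$ (\cref{th:local-minimum-inverse-fourth}), and $\|\hat{z}_i-\hat{z}_j\|\le 2$. Hence $U_{2,j}$ lies between $(d-1)/4$ and a constant depending only on $d_0$, so $U_{2,j}\asymp d\asymp d\log d$ in this range. Both terms are then comparable to $\sum_jP_j^2/d$, and the same chain holds with adjusted universal constants. The $d_0$ threshold is needed only to keep the $O(d)$ error in \cref{eq:fekete-local-riesz-two} from swamping the main term; the $\log d$ is bounded below because $d\ge2$.

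For the final assertion, suppose $I_{d,i}\le C I_d$ held for all $i$ and $d$. Summing would give $\sum_i I_{d,i}\le C d\,I_d$, whereas the lower bound just proved gives $\sum_iI_{d,i}\ge c\,d\log d\,I_d$. Since $I_d>0$ (the integrand $H_{\omega_d}^2$ vanishes only at finitely many points), we obtain $c\log d\le C$, which is false for large $d$. The main obstacle is really the uniform nodewise $U_{2,j}$ asymptotics, which is already supplied by \cref{thm:fekete-riesz-potentials}; the remaining steps are bookkeeping.
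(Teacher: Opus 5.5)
Your proposal is correct and follows essentially the same route as the paper. You sum the nodewise comparison of \cref{th:fekete-discrete-Id-Ii}, exchange the sums, use \cref{eq:fekete-local-riesz-two} to get $U_{2,j}\asymp d\log d$ uniformly, handle small degrees with $\|\hat z_i-\hat z_j\|\le2$ and separation, and compare with $I_d\asymp d^{-2}\sum_jP_j^2$; the final assertion then follows from the same summation contradiction.
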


\nopagebreak[4]
\noindent
See the \hyperref[proof:average-Ii-Id-log-gap]{proof}
on page~\pageref{proof:average-Ii-Id-log-gap}.

\begin{corollary}\label{cor:fekete-norm-comparisons}
For every elliptic Fekete configuration, $d\ge2$,
\[
I_{d,i}^{1/2}\le P_i\le\sqrt d\,I_{d,i}^{1/2},
\qquad 1\le i\le d,
\]
and
\[
\max_{1\le i\le d}P_i\gtrsim\sqrt d\,I_d^{1/2}.
\]
If $f$ has the corresponding roots, then
\[
\cond{f}{z_i}\ge\frac{\sqrt{d+1}}{2}
\frac{I_d^{1/2}}{I_{d,i}^{1/2}},\qquad 1\le i\le d.
\]
The implicit constant is universal.
\end{corollary}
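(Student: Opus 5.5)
The plan rests on two exact inequalities: one from the Fekete extremality, and one from the Bergman-type sup-norm bound for the space of homogeneous polynomials of degree $d-1$. The remaining claims then follow from \cref{th:fekete-discrete-Id-Ii} and \cref{eq:norm_cond}.

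\emph{Lower bound $I_{d,i}^{1/2}\le P_i$.} Fix $i$ and replace $\hat z_i$ by an arbitrary $p\in\Sphere^2$. The ordered product of pairwise distances of the new configuration equals the old one times $\bigl(H_{\omega_d\setminus\{\hat z_i\}}(p)/P_i\bigr)^2$. By maximality of the Fekete product this ratio is at most $1$, so
\[
\max_{p}H_{\omega_d\setminus\{\hat z_i\}}(p)=P_i.
\]
Integrating the square against $\sigma$ gives $I_{d,i}\le P_i^2$.

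\emph{Upper bound $P_i\le\sqrt d\,I_{d,i}^{1/2}$.} Let $G$ be the homogeneous form of degree $n=d-1$ vanishing at $\omega_d\setminus\{\hat z_i\}$, built as in \cref{sec:sphericalpoints}. Choosing unit representatives $(u_j,v_j)$ gives
\[
H_{\omega_d\setminus\{\hat z_i\}}(\hat z)^2=4^{n}\,\frac{|G(u,v)|^2}{(|u|^2+|v|^2)^{n}},
\]
using the chordal formula $\|\hat z-\hat w\|=2|u_jv-v_ju|$ for unit representatives. So the claim is the sharp sup-norm inequality
\[
\sup_{\Sphere^2}\frac{|G|^2}{\|\cdot\|^{2n}}\le(n+1)\int_{\Sphere^2}\frac{|G|^2}{\|\cdot\|^{2n}}\,\mathrm d\sigma
\]
on the $(n+1)$-dimensional space of such forms. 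I would prove it with the reproducing kernel of that space for the $L^2(\sigma)$ inner product of $|G|^2/\|\cdot\|^{2n}$. By $\mathrm{SU}(2)$-invariance its diagonal $K(p,p)$ is constant, and integrating the diagonal gives $K\equiv n+1$. Cauchy--Schwarz then yields $|G(p)|^2/\|p\|^{2n}\le K(p,p)\,\|G\|_{L^2}^2$. Alternatively, rotate $p$ to the north pole and expand in the orthogonal monomial basis $u^kv^{n-k}$; only one monomial survives there and its squared $L^2$ norm is $1/(n+1)$. Applying this at $p=\hat z_i$ gives $P_i^2\le d\,I_{d,i}$. The only real work is checking the normalization of the monomial norms, namely $\int |u^kv^{n-k}|^2/\|\cdot\|^{2n}\,\mathrm d\sigma=1/\bigl((n+1)\binom nk\bigr)$. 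I expect this to be the main point requiring care.

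\emph{Lower bound on $\max_i P_i$.} By \cref{th:fekete-discrete-Id-Ii},
\[
I_d\lesssim d^{-2}\sum_jP_j^2\le d^{-1}\max_jP_j^2,
\]
which is exactly $\max_iP_i\gtrsim\sqrt d\,I_d^{1/2}$ with a universal constant.

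\emph{Condition number.} Insert $P_i\le\sqrt d\,I_{d,i}^{1/2}$ into the identity \cref{eq:norm_cond}:
\[
\cond{f}{z_i}=\frac{\sqrt{d(d+1)}}2\,\frac{I_d^{1/2}}{P_i}\ge\frac{\sqrt{d+1}}2\,\frac{I_d^{1/2}}{I_{d,i}^{1/2}}.
\]
This inequality is exact; the only non-explicit constant in the corollary is the one in the bound on $\max_iP_i$, which is inherited from \cref{th:fekete-discrete-Id-Ii}.
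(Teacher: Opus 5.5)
Your proposal is correct and follows the paper's proof step for step. The bound $I_{d,i}^{1/2}\le P_i$ is the one-point extremality of \cref{lem:fekete-node-is-max}, and $P_i\le\sqrt d\,I_{d,i}^{1/2}$ is \cref{lem:spherical-nikolskii} applied to the degree-$(d-1)$ deleted product; the paper proves that lemma by exactly your rotate-and-expand computation in the orthogonal monomial basis, and your reproducing-kernel argument is an equivalent repackaging of it. The last two claims then come from \cref{th:fekete-discrete-Id-Ii} and \cref{eq:norm_cond} just as you describe.
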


\nopagebreak[4]
\noindent
See the \hyperref[proof:fekete-norm-comparisons]{proof}
on page~\pageref{proof:fekete-norm-comparisons}.

The geometric mean in \cref{eq:fekete-geometric-mean} has the fine scale $\sqrt d\,e^{-\kappa d}$. The next theorem controls each product and each integral, at the cost of explicit polynomial factors.

\begin{theorem}\label{th:fekete-exponential-scale-integrals}
Let $\omega_d=\{\hat{z}_1,\ldots,\hat{z}_d\}\subset\Sphere^2$
be an elliptic Fekete configuration with $d\ge2$.
Then, uniformly for $1\le i\le d$,
\[
e^{-\kappa(d-1)}
\le P_i
\lesssim d\,e^{-\kappa d},
\]
\[
e^{-\kappa(d-1)}
\le I_{d,i}^{1/2}
\lesssim \sqrt{d\log d}\,e^{-\kappa d},
\]
and
\[
e^{-\kappa d}
\le I_d^{1/2}
\lesssim \sqrt d\,e^{-\kappa d}.
\]
\end{theorem}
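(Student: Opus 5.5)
The lower bounds come from the Fekete property and Jensen's inequality. For the upper bounds, everything reduces to one uniform estimate, $\max_iP_i\lesssim d\,e^{-\kappa d}$, which is then fed into \cref{th:fekete-discrete-Id-Ii} and \cref{thm:fekete-riesz-potentials}.

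\emph{Lower bounds.} Since $\omega_d$ minimizes $\Energy$, moving $\hat z_i$ to any $p\in\Sphere^2$ cannot increase the product of distances. Hence
\[
P_i=\max_{p\in\Sphere^2}H_{\omega_d\setminus\{\hat z_i\}}(p).
\]
By \cref{def:kappa},
\[
\int_{\Sphere^2}\log H_{\omega_d\setminus\{\hat z_i\}}\,\mathrm d\sigma=-\kappa(d-1),
\qquad
\int_{\Sphere^2}\log H_{\omega_d}\,\mathrm d\sigma=-\kappa d .
\]
A maximum dominates the exponential of the mean of the logarithm, so $P_i\ge e^{-\kappa(d-1)}$. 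Jensen's inequality gives $\int H^2\,\mathrm d\sigma\ge\exp\bigl(2\int\log H\,\mathrm d\sigma\bigr)$, which yields $I_{d,i}^{1/2}\ge e^{-\kappa(d-1)}$ and $I_d^{1/2}\ge e^{-\kappa d}$.

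\emph{Reduction of the upper bounds.} Suppose $P_j\le C d\,e^{-\kappa d}$ for all $j$.
\begin{itemize}
\item[(i)] By \cref{th:fekete-discrete-Id-Ii}, $I_d\asymp d^{-2}\sum_jP_j^2\lesssim d\,e^{-2\kappa d}$, so $I_d^{1/2}\lesssim\sqrt d\,e^{-\kappa d}$.
\item[(ii)] The same theorem gives
\[
I_{d,i}\lesssim \frac{P_i^2}{d}+\frac{\max_jP_j^2}{d^2}\,U_{2,i}(\omega_d).
\]
By \cref{eq:fekete-local-riesz-two}, $U_{2,i}(\omega_d)=\tfrac14 d\log d+O(d)$. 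Hence $I_{d,i}\lesssim d\,e^{-2\kappa d}+d\log d\,e^{-2\kappa d}$, which is the claimed bound.
\end{itemize}
The factor $\log d$ in the $I_{d,i}$ bound is exactly the critical Riesz sum, consistent with \cref{cor:average-Ii-Id-log-gap}. So everything reduces to $\max_iP_i\lesssim d\,e^{-\kappa d}$.

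\emph{The main step: $P_i\lesssim d\,e^{-\kappa d}$.} I would use a sub-mean-value argument at scale $r=a/\sqrt d$. On $\Sphere^2$,
\[
\Delta\log\|p-q\|=\text{const}>0 \quad\text{away from }q .
\]
So $u:=\log H_{\omega_d\setminus\{\hat z_i\}}$ satisfies $\Delta u\ge -c(d-1)$ away from the other nodes. A nodal logarithmic singularity only makes $u$ smaller, so $u$ is subharmonic up to a quadratic correction. This gives
\[
\log P_i=u(\hat z_i)\le \frac{1}{|\partial B|}\int_{\partial B_{\rm geo}(\hat z_i,r)}u+O(dr^2).
\]
On that circle $u=\log H_{\omega_d}-\log r$, and $O(dr^2)=O(1)$. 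Therefore
\[
\log P_i\le \tfrac12\log d+\operatorname{avg}_{\partial B}\log H_{\omega_d}+O(1).
\]
It then suffices to show that $\log H_{\omega_d}$ averages to at most $-\kappa d+\tfrac12\log d+O(1)$ on such small circles.

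I would obtain this by a bootstrap with $P^\ast:=\max_kP_k$.
\begin{itemize}
\item[(a)] Every $p$ lies in some Voronoi cell $V_k$, and there $H_{\omega_d}(p)=\|p-\hat z_k\|\,H_{\omega_d\setminus\{\hat z_k\}}(p)\le\rho(\omega_d)P_k$. Hence $\max H_{\omega_d}\lesssim P^\ast/\sqrt d$ by \cref{th:covering-radius-fekete}.
\item[(b)] Set $v:=\log\max H_{\omega_d}-\log H_{\omega_d}\ge0$. Then $\Delta v\le cd$ off the nodes, with positive singularities at the nodes.
\item[(c)] A weak Harnack inequality on caps of radius $\asymp d^{-1/2}$ should bound the circle average of $v$ near $\hat z_{k^\ast}$ from below by a constant. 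Combined with $\int v\,\mathrm d\sigma=\log\max H_{\omega_d}+\kappa d$, this should close the loop.
\end{itemize}

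The main obstacle is this last step. The local counting bound of \cref{thm:fekete-cap-discrepancy}, integrated radially, only controls $\log H_{\omega_d}+\kappa d$ up to $O(\sqrt d)$, which is far too weak. So some genuinely variational input, namely the Fekete maximality of each $P_k$ together with the subharmonic structure, has to replace pure discrepancy. I am not certain the Harnack step closes as stated. If it fails, the fallback is to use the separation $\delta\gtrsim d^{-1/2}$ from \cref{cor:fekete-separation} to make the nodal singularities harmless at scale $d^{-1/2}$.
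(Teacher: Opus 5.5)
Your lower bounds and your reduction of the $I_d$ and $I_{d,i}$ estimates to $\max_iP_i\lesssim d\,e^{-\kappa d}$, via \cref{th:fekete-discrete-Id-Ii} and \cref{eq:fekete-local-riesz-two}, are the same as the paper's. Your sub-mean-value step is also sound. One sign slip: off $q$ one has $\Delta\log\|p-q\|=-\tfrac12$ for the unnormalized Laplace--Beltrami operator, not a positive constant, but your conclusion $\Delta u\ge-c(d-1)$ is correct. That step in fact proves $P_i\lesssim\sqrt d\,\mathcal Q(\omega_d)$ for every configuration.

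The gap is the remaining absolute bound $\mathcal Q(\omega_d)\lesssim\sqrt d\,e^{-\kappa d}$, and your bootstrap cannot produce it. Step (a) gives $\mathcal Q(\omega_d)\lesssim P^\ast/\sqrt d$. Combined with the sub-mean-value bound, you only learn $P^\ast\asymp\sqrt d\,\mathcal Q(\omega_d)$, a relation between two unknowns. In (c), a lower bound on the average of $v$ near $\hat z_{k^\ast}$ only gives $\operatorname{avg}\log H_{\omega_d}\le\log\mathcal Q(\omega_d)-c$. You would still need $\int v\,\mathrm d\sigma=\log\mathcal Q(\omega_d)+\kappa d\le\tfrac12\log d+O(1)$, which is exactly the bound being sought. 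Weak Harnack actually gives upper bounds on local averages of $v$, and these give no useful control of $\int v\,\mathrm d\sigma$: chaining them across the sphere loses a constant factor at each of $\asymp\sqrt d$ steps. More fundamentally, $v\ge0$ and $\Delta v\le cd$ hold for every configuration. This includes points clustered near a single location, where $\mathcal Q(\omega_d)\approx2^d$ is exponentially larger than $\sqrt d\,e^{-\kappa d}$ because $e^{-\kappa}=2e^{-1/2}<2$. Fekete maximality enters your plan only through the relative bound (a), and $\int\log H_{\omega_d}\,\mathrm d\sigma=-\kappa d$ yields only lower bounds. The separation fallback does not change this.

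What is missing is a global input that fixes the size of the $P_k$ absolutely. The paper uses \cref{eq:asymp_min_energy}, needed only up to $O(d)$, so that $M_d^{1/(d-1)}=\sqrt d\,e^{-\kappa d+O(1)}$. It combines this with two variational comparisons. Adjoining a point $p$ to $\omega_d$ gives $M_dH_{\omega_d}(p)^2\le M_{d+1}$. Deleting in turn each point of a $(d+1)$-point Fekete configuration gives $M_{d+1}^{d-1}\le M_d^{d+1}$. Hence $\mathcal Q(\omega_d)\le(M_{d+1}/M_d)^{1/2}\le M_d^{1/(d-1)}\lesssim\sqrt d\,e^{-\kappa d}$ (\cref{lem:maximal-vandermonde-growth}). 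Then $\max_iP_i\lesssim\sqrt d\,\mathcal Q(\omega_d)$ finishes, either by \cref{prop:fekete-polarization-max-product} or by your own sub-mean-value estimate. An equally short route: \cref{lem:fekete-pairwise-products} gives $\max_iP_i\lesssim\sqrt d\,\min_iP_i\le\sqrt d\,M_d^{1/d}$, and \cref{eq:fekete-geometric-mean} concludes.

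Your subharmonic idea can also be rescued, with the covering radius as the global input. This requires applying the sub-mean-value inequality at every node, not just at $\hat z_{k^\ast}$, and adding an oscillation bound.
\begin{enumerate}
\item Since $V_k\subset B_{\rm ch}(\hat z_k,\rho(\omega_d))$ with $\rho(\omega_d)\lesssim d^{-1/2}$, and $\log H_{\omega_d\setminus\{\hat z_k\}}\le\log P_k$, sub-mean-value gives $\int_{V_k}\log H_{\omega_d\setminus\{\hat z_k\}}\,\mathrm d\sigma\ge\sigma(V_k)\log P_k-O(d^{-1})$.
\item Separation gives $\int_{V_k}\log\|p-\hat z_k\|\,\mathrm d\sigma(p)\ge-\sigma(V_k)\bigl(\tfrac12\log d+O(1)\bigr)$.
\item Summing over cells against $\int\log H_{\omega_d}\,\mathrm d\sigma=-\kappa d$ yields $\sum_k\sigma(V_k)\log P_k\le-\kappa d+\tfrac12\log d+O(1)$.
\item \cref{lem:fekete-pairwise-products} then bounds $\max_k\log P_k$ by this weighted mean plus $\tfrac12\log d+O(1)$.
\end{enumerate}
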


\nopagebreak[4]
\noindent
See the \hyperref[proof:fekete-exponential-scale-integrals]{proof}
on page~\pageref{proof:fekete-exponential-scale-integrals}.


\subsection{Logarithmic polarization}\label{sec:polarization-results}

The next theorem compares $\mathcal Q(\omega_d)$ with the
optimal value $\mathcal Q_d$ for elliptic Fekete configurations.

\begin{theorem}\label{th:fekete-wagner-polarization}
For every elliptic Fekete configuration $\omega_d\subset\Sphere^2$
with $d\ge2$,
\[
0\le
\log\frac{\mathcal Q(\omega_d)}{\mathcal Q_d}
\le\frac12\log d+O(1).
\]
\end{theorem}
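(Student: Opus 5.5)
The lower bound $0\le\log(\mathcal Q(\omega_d)/\mathcal Q_d)$ is immediate from the definition of $\mathcal Q_d$ as a minimum over all configurations, repeated points included. The work is in the upper bound. Wagner's lower estimate gives $\mathcal Q_d\ge(1+c_1)e^{-\kappa d}\ge e^{-\kappa d}$. It therefore suffices to show that
\[
\mathcal Q(\omega_d)=\max_{p\in\Sphere^2}H_{\omega_d}(p)\lesssim\sqrt d\,e^{-\kappa d}
\]
for every elliptic Fekete configuration.

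I plan to obtain this from the Fekete property itself. Let $p^*$ be a point where $H_{\omega_d}$ attains its maximum. If we replace a node $\hat z_i$ by $p^*$, the product of pairwise distances changes by the factor $\bigl(H_{\omega_d\setminus\{\hat z_i\}}(p^*)/P_i\bigr)^2$. Maximality of $M_d$ then gives $H_{\omega_d\setminus\{\hat z_i\}}(p^*)\le P_i$ for every $i$. Hence
\[
\mathcal Q(\omega_d)=\|p^*-\hat z_i\|\,H_{\omega_d\setminus\{\hat z_i\}}(p^*)\le\|p^*-\hat z_i\|\,P_i.
\]
Choosing $i$ to be the node nearest to $p^*$ and using \cref{th:covering-radius-fekete} gives $\|p^*-\hat z_i\|\le\rho(\omega_d)\le C/\sqrt d$. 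Combining this with the upper bound $P_i\lesssim d\,e^{-\kappa d}$ from \cref{th:fekete-exponential-scale-integrals} yields $\mathcal Q(\omega_d)\lesssim\sqrt d\,e^{-\kappa d}$. Dividing by $\mathcal Q_d\ge e^{-\kappa d}$ and taking logarithms gives $\tfrac12\log d+O(1)$.

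An alternative route is available if one prefers to avoid the nodewise bound. Since $H_{\omega_d}^2$ is, up to the factor $(1+|z|^2)^{-d}$, the squared modulus of a polynomial of degree $d$, a standard sup–$L^2$ comparison for degree-$d$ polynomials on the sphere gives $\max H^2\lesssim d\,I_d$. Then $I_d^{1/2}\lesssim\sqrt d\,e^{-\kappa d}$ from \cref{th:fekete-exponential-scale-integrals} would lead to a worse bound $d\,e^{-\kappa d}$, so the first route is the right one.

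The main obstacle is justifying the step $H_{\omega_d\setminus\{\hat z_i\}}(p^*)\le P_i$ when $p^*$ coincides with some other node. This cannot actually happen, since $H$ vanishes at the nodes while its maximum is positive, so $p^*$ is never a node. The replacement therefore produces a configuration of $d$ distinct points, and the comparison with $M_d$ is legitimate. Apart from this check, the argument uses only earlier results, with the constant in $O(1)$ universal.
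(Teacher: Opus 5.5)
Your argument is correct. \cref{th:fekete-exponential-scale-integrals} is proved without using the present statement, so you may legitimately cite its bound $P_i\lesssim d\,e^{-\kappa d}$. Then $\mathcal Q(\omega_d)\le\rho(\omega_d)\max_iP_i\lesssim d^{-1/2}\cdot d\,e^{-\kappa d}$ is exactly what you need.

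It is, however, a detour compared with the paper. The paper bounds $\mathcal Q(\omega_d)$ directly by \emph{adjoining} a point rather than replacing one. For every $p$ one has $M_dH_{\omega_d}(p)^2\le M_{d+1}$. Deleting each point of a $(d+1)$-point Fekete configuration in turn gives $M_{d+1}^{d-1}\le M_d^{d+1}$. Together these yield $\mathcal Q(\omega_d)\le(M_{d+1}/M_d)^{1/2}\le M_d^{1/(d-1)}=\sqrt d\,e^{-\kappa d-\kappa-C_{\log}+o(1)}$ (\cref{lem:maximal-vandermonde-growth,eq:Md-root-asymptotic}). Combined with $\mathcal Q_d\ge e^{-\kappa d}$, this finishes the proof; it uses only the energy asymptotics and needs neither covering nor local stability.

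The nodewise bound you quote is obtained in the paper from exactly this inequality, as $\max_jP_j\lesssim\sqrt d\,\mathcal Q(\omega_d)\lesssim d\,e^{-\kappa d}$. So your chain runs $\mathcal Q\to\max_jP_j\to\mathcal Q$ and returns the same order. Along the way it also invokes the Lev--Ortega-Cerd\`a covering estimate and, through the cited theorem, the local stability lemma.

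Without that black box your argument would have a gap. One-point replacement only compares $\mathcal Q(\omega_d)$ with the $P_i$, and deletion bounds $P_i$ from below (\cref{rem:deletion-versus-adjoining}). Neither gives an upper bound on $P_i$; the adjoining operation is the real engine.

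What your inequality $\mathcal Q(\omega_d)\le\rho(\omega_d)\max_iP_i$ does buy is the conditional improvement. Any better nodewise bound, e.g.\ $P_i\lesssim\sqrt d\,e^{-\kappa d}$ under log-balancedness, turns it into $\mathcal Q(\omega_d)\lesssim e^{-\kappa d}$, which is \cref{prop:fekete-polarization-upper-balance}.
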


\nopagebreak[4]
\noindent
See the \hyperref[proof:fekete-wagner-polarization]{proof}
on page~\pageref{proof:fekete-wagner-polarization}.

Thus every elliptic Fekete family is asymptotically optimal
for Wagner's problem at the exponential scale, since
\[
\log\frac{\mathcal Q(\omega_d)}{\mathcal Q_d}
=O(\log d)=o(d).
\]

\begin{proposition}\label{prop:fekete-polarization-max-product}
For every elliptic Fekete configuration with $d\ge2$,
\[
\mathcal Q(\omega_d)\asymp\frac1{\sqrt d}\max_{1\le i\le d}P_i,
\]
with universal comparison constants.
\end{proposition}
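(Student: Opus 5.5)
We prove the two inequalities separately, using the covering estimate of \cref{th:covering-radius-fekete}, the separation of \cref{cor:fekete-separation}, and the local counting bound of \cref{thm:fekete-cap-discrepancy}.

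\emph{Lower bound $\mathcal Q(\omega_d)\gtrsim d^{-1/2}\max_iP_i$.} Fix the node $\hat z_i$ with the largest $P_i$. Evaluate $H_{\omega_d}$ at a point $p$ with $\|p-\hat z_i\|=t$, where $t=\beta/\sqrt d$ and $\beta$ is a small universal constant (for instance $\beta=\alpha/4$). This gives
\[
H_{\omega_d}(p)=t\,P_i\prod_{j\ne i}\frac{\|p-\hat z_j\|}{\|\hat z_i-\hat z_j\|}.
\]
The key step is to show that the logarithm of this product is bounded below by a universal constant for a suitable choice of $p$ on the circle $\|p-\hat z_i\|=t$. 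Averaging $\log$ of each factor over that circle makes the first-order terms cancel by symmetry. What remains for each $j$ is a nonnegative mean-value contribution (subharmonicity of $\log\|\cdot-\hat z_j\|$ in the stereographic chart centred at $\hat z_i$) plus a curvature correction of size $O(t^2)$. Summing $d$ such corrections gives $O(dt^2)=O(\beta^2)$. The curvature correction can be made exact by mapping to $\C$ with $\hat z_i\mapsto0$ and using
\[
\|\hat z-\hat w\|=\frac{2|z-w|}{\sqrt{1+|z|^2}\sqrt{1+|w|^2}}.
\]
Then $\log|z-z_j|$ averaged over $|z|=r$ equals $\max(\log r,\log|z_j|)\ge\log|z_j|$. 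The conformal factor $(1+r^2)^{-d/2}$ is $e^{-O(dr^2)}=e^{-O(\beta^2)}$. Separation guarantees $|z_j|\gtrsim\alpha/\sqrt d>r$, so the average is exactly $\log|z_j|$. Hence some $p$ on the circle satisfies $H_{\omega_d}(p)\ge c\,t\,P_i\asymp P_i/\sqrt d$.

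\emph{Upper bound $\mathcal Q(\omega_d)\lesssim d^{-1/2}\max_iP_i$.} Let $p$ maximize $H_{\omega_d}$. By \cref{th:covering-radius-fekete} there is a node $\hat z_i$ with $\|p-\hat z_i\|\le C/\sqrt d$. Again work in the chart centred at $\hat z_i$. The function $g(z)=\prod_{j\ne i}(z-z_j)$ is holomorphic, so $\log|g|$ is harmonic on the disc $|z|<\alpha/(2\sqrt d)$, and we can compare $|g(z_p)|$ with $|g(0)|$. The difficulty is that $p$ may lie at distance $C/\sqrt d$ from $\hat z_i$, outside the separation disc, so the ratio $|z_p-z_j|/|z_j|$ is unbounded for the few nodes near $p$.

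We split the nodes. For those with $\|\hat z_j-\hat z_i\|\ge K/\sqrt d$, with $K$ large, expand $\log|1-z_p/z_j|$. The linear terms sum to $\operatorname{Re}(z_p\sum 1/z_j)$, and this must be controlled. Here we use the Fekete stationarity condition: the gradient of $\log P_i$ at $\hat z_i$ balances the external field, which in the chart says $\sum_{j\ne i}1/z_j=O(d\,|z|)$-type terms from the conformal factor. This gives a linear contribution $O(1)$. The quadratic terms are bounded by $|z_p|^2\sum_{|z_j|\ge K/\sqrt d}|z_j|^{-2}$, which is $O(d^{-1}\cdot d\log d)$ by \cref{thm:fekete-riesz-potentials}. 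That is too big. To avoid it, we instead bound the real part of the quadratic sum $\sum z_j^{-2}$, which cancels by equidistribution: using \cref{eq:fekete-local-count} on dyadic annuli, the sum is $O(d)$. So $|z_p|^2\cdot O(d)=O(1)$. For the $O(K^2)$ nodes near $p$, each ratio is at most $O(K)/\alpha$, which is bounded. Together with the factor $\|p-\hat z_i\|\le C/\sqrt d$, this gives $\mathcal Q(\omega_d)\lesssim d^{-1/2}P_i$.

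\textbf{Main obstacle.} The main obstacle is controlling the linear and quadratic moment sums $\sum_{j}z_j^{-1}$ and $\sum_j z_j^{-2}$ in the upper bound. If the dyadic-annulus cancellation for $\sum z_j^{-2}$ is not quite available, a fallback is available. We can use the maximum principle: note that $p$ is a critical point of $H_{\omega_d}$, and apply the same circle-averaging argument in reverse on a circle of radius $C/\sqrt d$, now counting how $H_{\omega_d}$ compares with nodes via subharmonicity of $\log H$ away from the nodes.
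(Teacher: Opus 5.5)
Your lower bound is fine, but the upper bound has a genuine gap. The step it rests on is the claim that the quadratic moment $\sum_j z_j^{-2}$ over the far nodes is $O(d)$ ``by equidistribution on dyadic annuli''.

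\begin{itemize}
\item The local count \cref{eq:fekete-local-count}, applied to annuli centred at $\hat z_i$, only controls radial counts. It says nothing about the angular cancellation of $z^{-2}$.
\item Bounding each annulus $|z_j|\sim 2^k/\sqrt d$ trivially gives $O(d)$ per annulus and $O(d\log d)$ in total. That is exactly the loss you were trying to avoid.
\item The cubic remainder of your expansion is also left unaddressed.
\item The subharmonicity fallback does not supply the missing bound either. It only bounds $\log H$ at $p$ by circle averages, which would themselves need upper bounds.
\end{itemize}

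The missing idea is the one-point extremality of Fekete configurations, \cref{lem:fekete-node-is-max}. Replacing $\hat z_i$ by any $p$ cannot increase the product of distances, so $H_{\omega_d\setminus\{\hat z_i\}}(p)\le P_i$ for every $p\in\Sphere^2$. In your chart this reads $|g(z)|(1+|z|^2)^{-(d-1)/2}\le|g(0)|$ for all $z$, which is precisely the comparison you were trying to extract from a Taylor expansion. Taking $\hat z_i$ to be a node nearest to the maximizer $p$, the paper concludes in one line from \cref{th:covering-radius-fekete}:
\[
\mathcal Q(\omega_d)=\|p-\hat z_i\|\,H_{\omega_d\setminus\{\hat z_i\}}(p)\le\rho(\omega_d)\,P_i\lesssim\frac1{\sqrt d}\max_{1\le j\le d}P_j .
\]
Your perturbative route can in principle be repaired. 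In the chart centred at $\hat z_i$, the diagonal entry of the second-variation matrix in the proof of \cref{th:local-minimum-inverse-fourth} is $A_{ii}=-\sum_{j\ne i}z_j^{-2}$, and it satisfies $|A_{ii}|\le d-1$. The cubic remainder follows from \cref{cor:fekete-riesz-hypersingular} with $s=3$. But this is far more work than needed.

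Your lower bound is correct and takes a different route from the paper's.
\begin{itemize}
\item The paper places $p$ at chordal distance $b/(2\sqrt d)$ from the node with largest product. It then invokes \cref{lem:fekete-lagrange-local-stability}, whose proof uses the Fekete bound above to get Cauchy estimates for $g$. This yields $H_{\omega_d\setminus\{\hat z_i\}}\ge aP_i$ on the whole small cap.
\item You use only that $\max_{|z|=r}|g(z)|\ge|g(0)|$ (maximum modulus, or Jensen as you wrote it), together with the conformal factor $(1+r^2)^{-(d-1)/2}=e^{-O(dr^2)}$.
\end{itemize}
Your argument needs neither the Fekete property nor separation. Your appeal to separation, to make the circle average exactly $\log|z_j|$, is superfluous, since $\max(\log r,\log|z_j|)\ge\log|z_j|$ always holds. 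So your argument actually shows $\mathcal Q(\omega_d)\gtrsim d^{-1/2}P_i$ for every configuration and every node. The paper's lemma gives the stronger, cap-wise lower bound, which it reuses in \cref{th:fekete-discrete-Id-Ii}.
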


\nopagebreak[4]
\noindent
See the \hyperref[proof:fekete-polarization-max-product]{proof}
on page~\pageref{proof:fekete-polarization-max-product}.

\begin{proposition}\label{prop:fekete-polarization-upper-balance}
Let $\{\omega_d\}_{d\in\mathcal D}$ be a family of elliptic Fekete configurations with $d\ge2$. Then
\[
\mathcal Q(\omega_d)\asymp e^{-\kappa d}
\quad\Longleftrightarrow\quad
P_i\lesssim\sqrt d\,e^{-\kappa d}
\quad\text{uniformly for }1\le i\le d.
\]
The comparison constants are independent of $d$ and $i$, but may depend on the family.
\end{proposition}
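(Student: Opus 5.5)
The equivalence follows from \cref{prop:fekete-polarization-max-product} together with Wagner's two-sided bounds on $\mathcal Q_d$. By \cref{prop:fekete-polarization-max-product}, for every elliptic Fekete configuration
\[
\mathcal Q(\omega_d)\asymp\frac1{\sqrt d}\max_{1\le i\le d}P_i
\]
with universal constants. The statement $\mathcal Q(\omega_d)\asymp e^{-\kappa d}$ is therefore equivalent to
\[
\max_i P_i\asymp\sqrt d\,e^{-\kappa d}.
\]
It remains to show that the lower half of this comparison holds automatically for Fekete configurations. Once that is done, the upper half is exactly the uniform bound $P_i\lesssim\sqrt d\,e^{-\kappa d}$.

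The lower bound has two independent sources, and I would record both.

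\emph{Wagner's bound.} Since $\mathcal Q(\omega_d)\ge\mathcal Q_d\ge(1+c_1)e^{-\kappa d}$, the lower bound $\mathcal Q(\omega_d)\gtrsim e^{-\kappa d}$ always holds. Combined with \cref{prop:fekete-polarization-max-product}, this gives $\max_iP_i\gtrsim\sqrt d\,e^{-\kappa d}$.

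\emph{The geometric mean.} Alternatively, $\max_iP_i\ge M_d^{1/d}=\sqrt d\,e^{-\kappa d-C_{\log}+o(1)}$ by \cref{eq:fekete-geometric-mean}. This gives the same conclusion with constants depending only on the $o(1)$ term. The Wagner route is cleaner because it is uniform in $d\ge2$.

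The two implications are then as follows.

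($\Leftarrow$) Assume $P_i\le C\sqrt d\,e^{-\kappa d}$ for all $i$. By \cref{prop:fekete-polarization-max-product},
\[
\mathcal Q(\omega_d)\lesssim d^{-1/2}\max_iP_i\le C'e^{-\kappa d},
\]
and the lower bound comes from Wagner's estimate.

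($\Rightarrow$) Assume $\mathcal Q(\omega_d)\le Ce^{-\kappa d}$. By the lower comparison in \cref{prop:fekete-polarization-max-product},
\[
\max_iP_i\lesssim\sqrt d\,\mathcal Q(\omega_d)\le C''\sqrt d\,e^{-\kappa d},
\]
which gives the uniform bound for every $i$.

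The constants depend only on the constants in the hypothesis, which are family-dependent, and on universal constants. This matches the remark in the statement.

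There is no real obstacle once \cref{prop:fekete-polarization-max-product} is available; all the substance lies in that proposition, which is assumed. The only care needed is to observe that the lower bound on $\mathcal Q(\omega_d)$ is unconditional, so that the equivalence reduces to one-sided upper bounds on both sides.
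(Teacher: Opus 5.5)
Your proposal is correct and matches the paper's proof: both directions go through \cref{prop:fekete-polarization-max-product}, and the unconditional lower bound on $\mathcal Q(\omega_d)$ supplies the remaining half. The only difference is that the paper takes this lower bound, $\mathcal Q(\omega_d)\ge e^{-\kappa d}$, from the elementary Jensen estimate in \cref{lem:product-jensen} rather than from Wagner's theorem, which avoids an external citation but changes nothing essential.
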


\nopagebreak[4]
\noindent
See the \hyperref[proof:fekete-polarization-upper-balance]{proof}
on page~\pageref{proof:fekete-polarization-upper-balance}.


\subsection{Conditioning and log-balancedness}\label{sec:logbalanced}

We now study the relation between log-balancedness and
polynomial conditioning, distinguishing results for general
polynomials from those requiring the Fekete property.

\begin{proposition}\label{prop:optcond_Id}
Let $\{f_d\}_{d\in\mathcal D}$ be a family of degree-$d$ polynomials with simple roots. If $\condtotal{f_d}\le C\sqrt d$ with $C$ independent of $d$, then
\[
e^{-\kappa d}\le I_d^{1/2}\lesssim_C e^{-\kappa d}.
\]
\end{proposition}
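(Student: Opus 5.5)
The lower bound $I_d^{1/2}\ge e^{-\kappa d}$ holds for every configuration, by Jensen's inequality applied to $\log H_{\omega_d}$. Indeed,
\[
\log I_d=\log\int H_{\omega_d}^2\,\mathrm d\sigma\ge 2\int\log H_{\omega_d}\,\mathrm d\sigma
=2\sum_j\int\log\|p-\hat z_j\|\,\mathrm d\sigma(p)=-2\kappa d,
\]
using the definition \cref{def:kappa}. So only the upper bound needs work.

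For the upper bound, I will use the exact identity \cref{eq:norm_cond}, which gives
\[
P_i=\frac{\sqrt{d(d+1)}}{2}\,\frac{I_d^{1/2}}{\cond{f}{z_i}}\ge\frac{\sqrt{d(d+1)}}{2C\sqrt d}\,I_d^{1/2}\gtrsim_C\sqrt d\,I_d^{1/2}
\]
for every $i$, under the hypothesis $\condtotal{f_d}\le C\sqrt d$. Taking the geometric mean over $i$,
\[
\sqrt d\,I_d^{1/2}\lesssim_C\Bigl(\prod_i P_i\Bigr)^{1/d}=\exp\bigl(-\Energy(\omega_d)/d\bigr)\le M_d^{1/d}.
\]
By \cref{eq:fekete-geometric-mean}, $M_d^{1/d}=\sqrt d\,e^{-\kappa d-C_{\log}+o(1)}$. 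Since $M_d$ is the maximal ordered product, the inequality $\prod_iP_i\le M_d$ holds for any configuration, so no Fekete assumption is required. Dividing by $\sqrt d$ gives $I_d^{1/2}\lesssim_C e^{-\kappa d}$. The constant is $e^{-C_{\log}+o(1)}$ times a constant depending on $C$, and is uniform for large $d$; small $d$ are absorbed into the constant.

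Two technical points need checking. First, roots at $N$: \cref{eq:norm_cond} and the quantities $P_i,I_d$ are rotation invariant, so we may rotate so that $N$ is not a root. Second, I am relying on the expansion \cref{eq:asymp_min_energy} only for an upper bound on $M_d^{1/d}/\sqrt d$, namely $\Energyminimal(d)\ge\kappa d^2-\tfrac12 d\log d-O(d)$. This lower bound on the minimal energy is classical (it is also implied by Marzo's bound on $C_{\log}$), so a uniform constant exists. This step is the only nontrivial input; the rest is the two-line averaging argument.
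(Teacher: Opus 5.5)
Your proof is correct and follows the paper's argument. You get the lower bound from Jensen's inequality, as in \cref{lem:product-jensen}. For the upper bound you take geometric means in \cref{eq:norm_cond}, use $\prod_iP_i\le M_d$ for an arbitrary configuration, and then apply \cref{eq:fekete-geometric-mean} to bound $M_d^{1/d}$ by a constant times $\sqrt d\,e^{-\kappa d}$.
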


\nopagebreak[4]
\noindent
See the \hyperref[proof:optcond_Id]{proof}
on page~\pageref{proof:optcond_Id}.

\begin{remark}\label{rem:optcond-bombieri}
At the level of asymptotic order, this is the integral form of \cite[Theorems~4.3 and~4.5]{Etayo2021}. For a monic polynomial, the identity in \cite[Theorem~4.2]{Etayo2021} is
\[
\frac{\prod_{j=1}^d\|z-z_j\|_{\mathrm{BW}}}{\|f_d\|_{\mathrm{BW}}}
=\frac{2^d}{\sqrt{d+1}\,I_d^{1/2}}.
\]
Thus the conclusion is equivalent to this norm ratio being comparable to $e^{d/2}/\sqrt{d+1}$, with constants depending at most on the conditioning bound.
\end{remark}

Optimal global conditioning also controls Riesz energies, not only the logarithmic case. 
The logarithmic-energy estimate of \cite[Theorem~1.5 and equation~(9)]{Etayo2021} is one input. To estimate Riesz energies, we also need to control the
contribution of nearby roots. We obtain this control from
the conditioning bound in \cref{lem:conditioning-close-pairs}.
Unlike the Fekete argument, this does not assume a separation bound of order $d^{-1/2}$.

\begin{theorem}
\label{th:optimal-conditioning-riesz}
Let $C_0>0$ and $d\ge2$. If $f_d$ is a degree-$d$ polynomial with simple roots and $\condtotal{f_d}\le C_0\sqrt d$, its spherical root configuration $\omega_d=\{\hat{z}_1,\ldots,\hat{z}_d\}\subset\Sphere^2$ satisfies
\begin{align*}
E_s(\omega_d)&=\frac{2^{1-s}}{2-s}\,d^2
+O_{C_0,s}\bigl(d^{1+s/2}\bigr),&&0<s<2,
\\
E_2(\omega_d)&=\frac14\,d^2\log d+O_{C_0}(d^2).
\end{align*}
The constants are independent of $d$ and the polynomial. 
\end{theorem}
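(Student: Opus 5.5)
The plan is to derive the Riesz estimates from \cref{lem:log-to-riesz-transfer}. As described in the introduction, that lemma turns two inputs into a bound on the Riesz energy: an upper bound on the logarithmic energy of the form $\Energy(\omega_d)\le\kappa d^2-\frac12 d\log d+O(d)$, and control of close pairs. The lower bounds need nothing beyond positivity of the kernels, since they hold for arbitrary configurations as in \cref{cor:fekete-riesz-second-order}. So only the upper bounds on $E_s(\omega_d)-\frac{2^{1-s}}{2-s}d^2$, and on $E_2(\omega_d)-\frac14d^2\log d$, require work.

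\textbf{First input: the logarithmic energy.} I would invoke \cite[Theorem~1.5 and equation~(9)]{Etayo2021}. It states that $\condtotal{f_d}\le C_0\sqrt d$ implies
\[
\Energy(\omega_d)\le\kappa d^2-\tfrac12 d\log d+O_{C_0}(d).
\]
An alternative route goes through \cref{eq:norm_cond} and \cref{prop:optcond_Id}. The bound $\cond{f_d}{z_i}\le C_0\sqrt d$ gives
\[
P_i\ge\frac{\sqrt{d(d+1)}}{2C_0\sqrt d}\,I_d^{1/2}\gtrsim_{C_0}\sqrt d\,e^{-\kappa d}.
\]
Summing $-\log P_i$ over $i$ then yields the log-energy bound directly. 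Either route produces a configuration whose logarithmic energy is within $O_{C_0}(d)$ of the minimum in \cref{eq:asymp_min_energy}. This part is routine.

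\textbf{Second input: close pairs.} For Fekete configurations the close-pair control came from second variation, through the bound $U_{4,i}\le(d-1)^2/16$. Here I would use \cref{lem:conditioning-close-pairs} instead. The expected shape of its conclusion is a bound on the number of pairs at chordal distance below $t$ (or on a truncated $U_{4}$-type sum) that is $\lesssim_{C_0} d^2t^2$ for $t\gtrsim d^{-1/2}$, with only sub-dominant contributions below that scale.

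The mechanism I expect is the following. The lower bound on $P_i$ rules out many roots clustering near $\hat z_i$. In a cap $B_{\rm ch}(\hat z_i,t)$, each root at distance $r$ contributes a factor $r$ to $P_i$ instead of a typical factor of size $e^{O(1)}$. Meanwhile the remaining factors are controlled from above: the maximum of $H$ is bounded via $I_d$ and a Bernstein/Markov-type inequality, so $\max H\lesssim d\,I_d^{1/2}$, and \cref{prop:optcond_Id} applies. Comparing the lower bound on $P_i$ with this upper bound forces $\sum_{j:\|\hat z_i-\hat z_j\|<t}\log\frac{1}{\|\hat z_i-\hat z_j\|}$ to be bounded in a way that controls the singular part of the Riesz kernel. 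This step is the main obstacle, because there is no a priori separation, and I would rely on the lemma as stated.

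\textbf{Conclusion.} Feeding both inputs into \cref{lem:log-to-riesz-transfer}, with constants depending on $C_0$, gives
\[
E_s(\omega_d)\le\frac{2^{1-s}}{2-s}d^2+O_{C_0,s}(d^{1+s/2})\quad(0<s<2),\qquad E_2(\omega_d)\le\tfrac14d^2\log d+O_{C_0}(d^2).
\]
For $0<s<2$, the smooth-kernel mechanism I expect inside the lemma runs as follows. Write the Riesz kernel as the logarithmic kernel averaged against a positive definite family of kernels at scale $\varepsilon$. Lose $O(d^2\varepsilon^{2-s})$ from positivity and gain the logarithmic remainder $O(d)$ times $\varepsilon^{-s}$. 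Optimizing at $\varepsilon\asymp d^{-1/2}$ yields $d^{1+s/2}$. At $s=2$ the same scale produces $d^2$ after the log term is absorbed. Combining these upper bounds with the lower bounds completes the proof.
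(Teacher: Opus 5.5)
Your proposal is correct and follows the paper's proof. The lower bound $P_i\ge\frac{\sqrt{d+1}}{2C_0}I_d^{1/2}\ge\frac{\sqrt{d+1}}{2C_0}e^{-\kappa d}$, from \cref{eq:norm_cond} and \cref{lem:product-jensen}, gives \cref{eq:transfer-log-hypothesis} with $A=\log(2C_0)$; \cref{lem:conditioning-close-pairs} gives $T_d\le C_{C_0}d^2$; and \cref{lem:log-to-riesz-transfer} then yields both two-sided estimates. Your guesses about how those lemmas work inside (a Markov-type comparison for close roots, smoothing at scale $\varepsilon$) are not the paper's arguments, which use a localized zero-counting lemma on charts of scale $d^{-1/2}$ and a power-series expansion in $x_{ij}$ against the positive definite kernels $K_k$, but since you invoke the lemmas as stated this does not affect correctness.
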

\nopagebreak[4]
\noindent
See the \hyperref[proof:optimal-conditioning-riesz]{proof}
on page~\pageref{proof:optimal-conditioning-riesz}.

\begin{theorem}\label{th:local-condition-implies-log-balanced}
Let $\{f_d\}_{d\in\mathcal D}$ be a family of complex polynomials with simple roots, written as
\[
f_d(z)=a_d\prod_{i=1}^d(z-z_i).
\]
Let $\{\omega_d\}_{d\in\mathcal D}$ be their spherical root
configurations, with
$\omega_d=\{\hat{z}_1,\ldots,\hat{z}_d\}\subset\Sphere^2$.
If there exist constants $0<c\le C<\infty$ such that, for every
$d\in\mathcal D$,
\[
c\sqrt d
\le
\cond{f_d}{z_i}
\le
C\sqrt d,
\qquad 1\le i\le d,
\]
then $\{\omega_d\}_{d\in\mathcal D}$ is log-balanced.
\end{theorem}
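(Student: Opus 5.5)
The plan is to read off the nodal products directly from the exact identity \cref{eq:norm_cond}, and then pin down the common factor $I_d^{1/2}$ with \cref{prop:optcond_Id}. First, the condition number $\cond{f}{z_i}$ in \cref{eq:condition-bombieri} is invariant under multiplying $f$ by the nonzero constant $a_d$, since both $|f'(z_i)|$ and $\|f\|_{\mathrm{BW}}$ scale by $|a_d|$. So we may replace $f_d$ by the monic polynomial $f_{\omega_d}$. All roots are finite, so \cref{eq:norm_cond} applies at every root. Solving it for $P_i$ gives
\[
P_i=\frac{\sqrt{d(d+1)}}{2}\,\frac{I_d^{1/2}}{\cond{f_d}{z_i}},\qquad 1\le i\le d .
\]

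Second, insert the two-sided hypothesis $c\sqrt d\le\cond{f_d}{z_i}\le C\sqrt d$. Using $\sqrt{d(d+1)}\asymp d$, this yields, uniformly in $i$ and $d$,
\[
\frac{1}{2C}\sqrt{d}\,I_d^{1/2}\;\le\;\frac{\sqrt{d(d+1)}}{2C\sqrt d}\,I_d^{1/2}\;\le\;P_i\;\le\;\frac{\sqrt{d(d+1)}}{2c\sqrt d}\,I_d^{1/2}\;\le\;\frac{1}{\sqrt2\,c}\sqrt d\,I_d^{1/2}.
\]
Here the upper bound on $P_i$ uses the lower bound $c$ on the condition numbers, and vice versa.

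Third, the upper hypothesis gives $\condtotal{f_d}\le C\sqrt d$. Hence \cref{prop:optcond_Id} applies and yields $e^{-\kappa d}\le I_d^{1/2}\le C'e^{-\kappa d}$, with $C'$ depending only on $C$. Substituting into the previous display gives
\[
\frac{1}{2C}\sqrt d\,e^{-\kappa d}\le P_i\le\frac{C'}{\sqrt2\,c}\sqrt d\,e^{-\kappa d},
\]
which is exactly \cref{def:log-balanced} with constants depending only on $c$ and $C$.

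I expect no serious obstacle. All the analytic content sits in \cref{prop:optcond_Id}, namely the upper bound $I_d^{1/2}\lesssim e^{-\kappa d}$ coming from optimal global conditioning. The only points to check are the two already noted: scale invariance of $\mu$, so that a non-monic leading coefficient $a_d$ does no harm, and the fact that \cref{eq:norm_cond} needs every root to be finite, which holds because $f_d$ is a genuine degree-$d$ polynomial.
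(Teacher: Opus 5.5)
Your proposal is correct and follows essentially the same route as the paper's proof. Both solve \cref{eq:norm_cond} for $P_i$, use the upper local bound to invoke \cref{prop:optcond_Id} for $I_d^{1/2}\asymp e^{-\kappa d}$, and let the two-sided condition-number hypothesis give $P_i\asymp\sqrt d\,e^{-\kappa d}$ uniformly in $i$ and $d$. Your added remarks on the scale invariance of $\mu$ and the finiteness of the roots are harmless extra checks that the paper leaves implicit.
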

\nopagebreak[4]
\noindent
See the \hyperref[proof:local-condition-implies-log-balanced]{proof}
on page~\pageref{proof:local-condition-implies-log-balanced}.
\begin{corollary}\label{cor:log-balanced-local-lower-bound}
Let $\{\omega_d\}_{d\in\mathcal D}$ be log-balanced, and let
$\{f_d\}_{d\in\mathcal D}$ be any family of polynomials with
the corresponding spherical roots. Then, uniformly in $d$
and $1\le i\le d$,
\[
\cond{f_d}{z_i}
\asymp
\sqrt d\,e^{\kappa d}I_d^{1/2}.
\]
In particular,
\[
\sqrt d\lesssim\cond{f_d}{z_i},
\qquad d\in\mathcal D,\quad 1\le i\le d.
\]
All implicit constants depend only on the log-balancedness
bounds of the family.
\end{corollary}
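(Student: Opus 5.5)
The plan is to read everything off the exact identity \cref{eq:norm_cond}. For any polynomial with roots $z_i$ and spherical configuration $\omega_d$,
\[
\cond{f_d}{z_i}=\frac{\sqrt{d(d+1)}}{2}\,\frac{I_d^{1/2}}{P_i}.
\]
This identity involves $f_d$ only through its roots. The Bombieri--Weyl norm and $|f'(z_i)|$ both scale linearly in the leading coefficient, so the ratio in \cref{eq:condition-bombieri} is scale invariant. It is also rotation invariant, as noted after \cref{eq:norm_cond}. The conclusion therefore holds for any family of polynomials with the given roots, including when a root corresponds to $N$; in that case we use the homogeneous form and rotate.

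First, by log-balancedness (\cref{def:log-balanced}) there are constants $c,C>0$ with $c\sqrt d\,e^{-\kappa d}\le P_i\le C\sqrt d\,e^{-\kappa d}$ for all $d\in\mathcal D$ and $1\le i\le d$. Substituting into the identity gives
\[
\frac{\sqrt{d(d+1)}}{2C\sqrt d}\,e^{\kappa d}I_d^{1/2}
\le\cond{f_d}{z_i}\le
\frac{\sqrt{d(d+1)}}{2c\sqrt d}\,e^{\kappa d}I_d^{1/2}.
\]
Since $\sqrt{d(d+1)}/\sqrt d=\sqrt{d+1}\asymp\sqrt d$, this is exactly $\cond{f_d}{z_i}\asymp\sqrt d\,e^{\kappa d}I_d^{1/2}$. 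The constants are $1/(2C)$ and $1/c$ up to the factor $\sqrt{(d+1)/d}\in[1,\sqrt{3/2}]$, so they depend only on $c$ and $C$.

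For the ``in particular'' part, it remains to bound $e^{\kappa d}I_d^{1/2}$ below by an absolute constant. I would prove $I_d^{1/2}\ge e^{-\kappa d}$ for every configuration, not only well-conditioned ones; this is the lower bound already appearing in \cref{prop:optcond_Id} and \cref{th:fekete-exponential-scale-integrals}. The argument is Jensen's inequality for the probability measure $\sigma$:
\[
\log I_d=\log\int H_{\omega_d}^2\,\mathrm d\sigma\ \ge\ 2\int\log H_{\omega_d}\,\mathrm d\sigma=2\sum_{j}\int\log\|p-\hat z_j\|\,\mathrm d\sigma(p)=-2\kappa d,
\]
by \cref{def:kappa}. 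Hence $e^{\kappa d}I_d^{1/2}\ge1$, and so $\cond{f_d}{z_i}\gtrsim\sqrt d$ uniformly.

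There is no real obstacle. The only points to check are the invariance of \cref{eq:norm_cond} under the choice of representative polynomial, and that the Jensen step applies because $\log H_{\omega_d}$ is integrable (its singularities are logarithmic).
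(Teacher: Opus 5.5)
Your proposal is correct and follows essentially the same route as the paper: substitute the log-balancedness bounds for $P_i$ into the identity \cref{eq:norm_cond}, then use Jensen's inequality (\cref{lem:product-jensen}) to get $I_d^{1/2}\ge e^{-\kappa d}$ for the ``in particular'' part. Your additional remarks, that the identity is insensitive to scaling and rotation and that $\sqrt{d+1}\asymp\sqrt d$, are harmless clarifications of steps the paper leaves implicit.
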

\nopagebreak[4]
\noindent
See the \hyperref[proof:log-balanced-local-lower-bound]{proof}
on page~\pageref{proof:log-balanced-local-lower-bound}.

For Fekete configurations the discrete integral formula provides a particularly simple interpretation through weights.

\begin{proposition}\label{prop:condition-weights}
Let $f$ be a degree-$d$ polynomial with simple roots, $d\ge2$, and put
\[
\mu_i:=\cond{f}{z_i},
\qquad
w_i:=\frac{P_i^2}{\sum_{j=1}^dP_j^2}.
\]
For every such polynomial,
\begin{equation}\label{eq:condition-weight-identity}
w_i=\frac{\mu_i^{-2}}{\sum_{j=1}^d\mu_j^{-2}}.
\end{equation}
If its spherical root configuration is elliptic Fekete, then
\begin{equation}\label{eq:fekete-condition-weights}
\sum_{j=1}^d\mu_j^{-2}\asymp1,
\qquad
\mu_i^2\asymp\frac1{w_i},
\end{equation}
with universal constants. Consequently, for a Fekete family $\{\omega_d\}_{d\in\mathcal D}$ with $d\ge2$,
\begin{equation}\label{eq:global-condition-weights}
\condtotal{f_d}=O(\sqrt d)
\quad\Longleftrightarrow\quad
\min_{1\le i\le d}w_i\gtrsim\frac1d.
\end{equation}
The constants in this family equivalence may depend on the family.
\end{proposition}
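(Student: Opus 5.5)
The identity \cref{eq:condition-weight-identity} follows directly from \cref{eq:norm_cond}. Write $K:=\frac{\sqrt{d(d+1)}}{2}I_d^{1/2}$, which is the same for every root. Then $\mu_i=K/P_i$, so $\mu_i^{-2}=P_i^2/K^2$. Summing over $j$ and dividing, the factor $K^2$ cancels, and we get
\[
\frac{\mu_i^{-2}}{\sum_j\mu_j^{-2}}=\frac{P_i^2}{\sum_jP_j^2}=w_i.
\]
This uses only simple roots and the rotation invariance already noted. Roots at infinity are handled by first rotating, since $P_i$, $I_d$ and $\mu_i$ are all unchanged by rotations.

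For the Fekete part, I would use the same formula as a sum:
\[
\sum_j\mu_j^{-2}=\frac{\sum_jP_j^2}{K^2}=\frac{4\sum_jP_j^2}{d(d+1)I_d}.
\]
The first comparison in \cref{th:fekete-discrete-Id-Ii} gives $I_d\asymp d^{-2}\sum_jP_j^2$. Since $d(d+1)\asymp d^2$ for $d\ge2$, this yields $\sum_j\mu_j^{-2}\asymp1$ with universal constants. Rewriting the identity as $\mu_i^{2}=w_i^{-1}\bigl(\sum_j\mu_j^{-2}\bigr)^{-1}$ then gives $\mu_i^2\asymp1/w_i$ with the same constants. This step has no real obstacle: all the analytic work sits in \cref{th:fekete-discrete-Id-Ii}, which we take as given.

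For \cref{eq:global-condition-weights}, note that $\condtotal{f_d}^2=\max_i\mu_i^2\asymp\max_i w_i^{-1}=(\min_iw_i)^{-1}$. Hence $\condtotal{f_d}\le C\sqrt d$ implies $\min_iw_i\ge c'/(C^2d)$. Conversely, $\min_iw_i\ge c/d$ implies $\condtotal{f_d}\le C'\sqrt{d/c}$. The constants depend on the family only through $C$ or $c$, together with the universal constants from \cref{eq:fekete-condition-weights}. The only point needing care is that $\mu_i$ does not depend on the choice of polynomial with the given roots: $\mu$ is invariant under scaling $f$, and under rotation by the discussion after \cref{eq:norm_cond}. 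So "the corresponding polynomials $f_d$" is unambiguous.
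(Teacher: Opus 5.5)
Your proposal is correct and follows the paper's proof essentially step for step. The identity comes from dividing the expressions for $\mu_i^{-2}$ given by \cref{eq:norm_cond}. The bound $\sum_j\mu_j^{-2}\asymp1$ comes from the first comparison in \cref{th:fekete-discrete-Id-Ii}, and the family equivalence from taking maxima in $\mu_i^2\asymp1/w_i$.
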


\nopagebreak[4]
\noindent
See the \hyperref[proof:condition-weights]{proof}
on page~\pageref{proof:condition-weights}.

The following proposition collects the equivalent criteria for
log-balancedness within the Fekete class.

\begin{proposition}\label{prop:fekete-log-balanced-deletion}
Let $\{\omega_d\}_{d\in\mathcal D}$ be a family of elliptic Fekete
configurations, and let $\{f_d\}_{d\in\mathcal D}$ be polynomials
with the corresponding spherical roots. With the weights of
\cref{prop:condition-weights},
\begin{equation}\label{eq:balanced-condition-weights}
\begin{aligned}
&\{\omega_d\}_{d\in\mathcal D}\text{ is log-balanced}\\
&\quad\Longleftrightarrow\quad w_i\asymp d^{-1}\ \text{uniformly in }i,d\\
&\quad\Longleftrightarrow\quad \cond{f_d}{z_i}\asymp\sqrt d\ \text{uniformly in }i,d.
\end{aligned}
\end{equation}
These conditions are also equivalent to the existence of $C>0$,
independent of $d,i,j$, such that
\[
\left|
\Energy\!\left(\omega_d\setminus\{\hat z_i\}\right)
-\Energy\!\left(\omega_d\setminus\{\hat z_j\}\right)
\right|\le C,
\qquad 1\le i,j\le d.
\]
The constants in these family equivalences may depend on the family.
\end{proposition}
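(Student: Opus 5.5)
The three equivalences in \cref{eq:balanced-condition-weights} follow from \cref{prop:condition-weights} and the Fekete geometric mean \cref{eq:fekete-geometric-mean}; the deletion-energy criterion is an exact algebraic rewriting. We prove the chain log-balanced $\Rightarrow$ $w_i\asymp d^{-1}$ $\Rightarrow$ $\cond{f_d}{z_i}\asymp\sqrt d$ $\Rightarrow$ log-balanced, and then the equivalence with the deletion condition.

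\emph{From log-balanced to weights.} If $P_i\asymp\sqrt d\,e^{-\kappa d}$ uniformly, then $\sum_jP_j^2\asymp d^2e^{-2\kappa d}$. Hence $w_i=P_i^2/\sum_jP_j^2\asymp d^{-1}$.

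\emph{From weights to conditioning.} By \cref{eq:fekete-condition-weights}, $\mu_i^2\asymp 1/w_i\asymp d$ with universal constants. This gives $\cond{f_d}{z_i}\asymp\sqrt d$.

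\emph{From conditioning back to log-balancedness.} Two-sided local conditioning of order $\sqrt d$ gives log-balancedness directly by \cref{th:local-condition-implies-log-balanced}, which holds for arbitrary polynomial families. To see it self-containedly in the Fekete case, note that by \cref{eq:norm_cond} the ratios satisfy $P_i/P_j=\mu_j/\mu_i\asymp1$. So every $P_i$ is comparable to the geometric mean $(\prod_jP_j)^{1/d}$, which equals $\sqrt d\,e^{-\kappa d-C_{\log}+o(1)}\asymp\sqrt d\,e^{-\kappa d}$ by \cref{eq:fekete-geometric-mean}.

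\emph{The deletion criterion.} Write $\omega^{(i)}:=\omega_d\setminus\{\hat z_i\}$. Since energies count ordered pairs,
\[
\Energy(\omega^{(i)})=\Energy(\omega_d)+2\log P_i .
\]
The key step is this observation: removing node $i$ deletes the pairs $(i,j)$ and $(j,i)$, each contributing $-\log\|\hat z_i-\hat z_j\|$, and $\sum_{j\ne i}\log\|\hat z_i-\hat z_j\|=\log P_i$. Consequently
\[
\Energy(\omega^{(i)})-\Energy(\omega^{(j)})=2\log(P_i/P_j),
\]
so the deletion condition is equivalent to $P_i/P_j\asymp1$ uniformly in $i,j,d$. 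As in the previous step, combining this with the exact geometric mean from \cref{eq:fekete-geometric-mean} shows that this ratio condition is equivalent to log-balancedness. Conversely, log-balancedness trivially gives $|\log(P_i/P_j)|\le\log(C/c)$.

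There is no real obstacle. The only care needed is to track which constants depend on the family: those from \cref{prop:condition-weights}, and the $o(1)$ term in \cref{eq:fekete-geometric-mean}, which is uniform over Fekete configurations. All resulting comparison constants depend only on the family's bounds, as the statement allows.
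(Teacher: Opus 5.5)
Your proposal is correct and follows essentially the same route as the paper. Both arguments use three ingredients:
\begin{itemize}
\item the identification $w_i\asymp d^{-1}\Leftrightarrow\cond{f_d}{z_i}\asymp\sqrt d$ via \cref{eq:fekete-condition-weights};
\item the passage from uniform ratios $P_i/P_j\asymp1$ to $P_i\asymp M_d^{1/d}\asymp\sqrt d\,e^{-\kappa d}$ through the Fekete geometric mean;
\item the exact deletion identity $\Energy(\omega_d\setminus\{\hat z_i\})=\Energy(\omega_d)+2\log P_i$.
\end{itemize}
The only difference is cosmetic: you close the cycle from the conditioning side (via $P_i/P_j=\mu_j/\mu_i$, or directly by \cref{th:local-condition-implies-log-balanced}), while the paper closes it from the weights side. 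These are the same step, since $w_i/w_j=(\mu_j/\mu_i)^2$.
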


\nopagebreak[4]
\noindent
See the \hyperref[proof:fekete-log-balanced-deletion]{proof}
on page~\pageref{proof:fekete-log-balanced-deletion}.

Optimal global conditioning therefore gives a lower bound for every weight, whereas log-balancedness requires two-sided comparability with $1/d$. The condition $\sum_iw_i=1$ alone does not bridge this gap. 

There is nevertheless an unconditional restriction on the concentration of the Fekete weights. 
The next proposition bounds the ratio between the largest
and smallest nodal products and gives a lower bound for
the condition number at every root.

\begin{proposition}\label{prop:fekete-oscillation-local-condition}
There are universal constants $c,C>0$ such that every elliptic Fekete configuration $\omega_d=\{\hat{z}_1,\ldots,\hat{z}_d\}\subset\Sphere^2$, $d\ge2$, satisfies
\begin{equation*}
\frac{\max_{1\le i\le d}P_i}{\min_{1\le i\le d}P_i}\le C\sqrt d.
\end{equation*}
If $f_d$ is a polynomial with the corresponding spherical roots, then
\begin{equation}\label{eq:fekete-local-condition-log}
\cond{f_d}{z_i}\ge c\sqrt{\log(d+1)},
\qquad 1\le i\le d.
\end{equation}
\end{proposition}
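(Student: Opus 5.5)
The first claim follows from two Fekete properties: the polarization comparison of \cref{prop:fekete-polarization-max-product}, and the fact that in a Fekete configuration each node maximizes the product of distances to the other nodes. The second claim then follows from the weight description of \cref{prop:condition-weights}.

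\emph{Step 1 (nodal maximality).} Since $\omega_d$ minimizes $\Energy$, moving $\hat z_i$ to any other point $p$ cannot increase $\prod_{j\ne i}\|\hat z_i-\hat z_j\|$. Hence
\[
P_i=\max_{p\in\Sphere^2}H_{\omega_d\setminus\{\hat z_i\}}(p)
\]
for every $i$. This is the interpretation recalled in \cref{sec:EFP_FC}.

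\emph{Step 2 (oscillation bound).} Let $i_0$ be a node with $P_{i_0}=\min_iP_i$. For every $p\in\Sphere^2$,
\[
H_{\omega_d}(p)=\|p-\hat z_{i_0}\|\,H_{\omega_d\setminus\{\hat z_{i_0}\}}(p)\le 2P_{i_0},
\]
so $\mathcal Q(\omega_d)\le2\min_iP_i$. By \cref{prop:fekete-polarization-max-product},
\[
\max_iP_i\lesssim\sqrt d\,\mathcal Q(\omega_d)\le 2\sqrt d\,\min_iP_i,
\]
which is the first claim.

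\emph{Step 3 (setting up the condition-number bound).} By \cref{eq:fekete-condition-weights}, $\mu_i^2\asymp 1/w_i$. It therefore suffices to show $w_i\lesssim1/\log(d+1)$; for bounded $d$ this is trivial since $w_i\le1$. Let $q$ be a point where $H_{\omega_d}$ attains $\mathcal Q(\omega_d)$. Applying Step 1 at $p=q$ gives, for every $j$,
\[
P_j\ge H_{\omega_d\setminus\{\hat z_j\}}(q)=\frac{\mathcal Q(\omega_d)}{\|q-\hat z_j\|}.
\]
Here $q\notin\omega_d$, because $\mathcal Q(\omega_d)>0$. Summing,
\[
\sum_jP_j^2\ge\mathcal Q(\omega_d)^2\sum_j\|q-\hat z_j\|^{-2}.
\]
On the other hand, \cref{prop:fekete-polarization-max-product} gives $P_i^2\lesssim d\,\mathcal Q(\omega_d)^2$. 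Combining the two,
\[
w_i\lesssim\frac{d}{\sum_j\|q-\hat z_j\|^{-2}}.
\]

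\emph{Step 4 (main obstacle: the lower bound $\sum_j\|q-\hat z_j\|^{-2}\gtrsim d\log d$).} I would prove this with \cref{thm:fekete-cap-discrepancy} applied at the center $q$. Fix a large constant $K$, and consider dyadic annuli $A_k=\{2^{k}r_0<d_\sigma(\cdot,q)\le2^{k+1}r_0\}$ with $r_0=K/\sqrt d$ and $2^{k+1}r_0\le1$. Each $A_k$ has $\sigma$-measure $\asymp 4^kr_0^2$. By \cref{eq:fekete-local-count}, $A_k$ contains at least
\[
c\,d\,4^kr_0^2-C(1+2^{k+1}r_0\sqrt d)
\]
points. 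Choosing $K$ large makes this $\gtrsim d\,4^kr_0^2$ for all such $k$. Each point of $A_k$ contributes $\gtrsim(4^kr_0^2)^{-1}$ to the sum, so each annulus contributes $\gtrsim d$. There are $\asymp\log d$ such annuli, which gives $\sum_j\|q-\hat z_j\|^{-2}\gtrsim d\log d$.

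Hence $w_i\lesssim1/\log d$, and therefore $\mu_i\ge c\sqrt{\log(d+1)}$ for $d$ large. Small $d$ is absorbed into the constant $c$, using the universal bound $\mu_i\ge\sqrt{d(d+1)}/2\cdot I_{d,i}^{1/2}/P_i\cdot(I_d/I_{d,i})^{1/2}$ from \cref{cor:fekete-norm-comparisons}, or simply the general positivity lower bound for $\mu_i$.
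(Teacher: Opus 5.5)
Your proof is correct, but it takes a different route from the paper's.

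\textbf{The paper's route.} The paper derives both claims from the pairwise comparison in \cref{lem:fekete-pairwise-products}. The upper bound $P_j/P_i\le C\sqrt d\,\|\hat z_i-\hat z_j\|\le 2C\sqrt d$ gives the oscillation bound directly. Squaring the lower bound $P_j/P_i\gtrsim(\sqrt d\,\|\hat z_i-\hat z_j\|)^{-1}$ and summing over $j\ne i$ gives $1/w_i\gtrsim U_{2,i}(\omega_d)/d$. The nodal critical estimate \cref{eq:fekete-local-riesz-two} then supplies $U_{2,i}\gtrsim d\log d$.

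\textbf{Your route.} You instead work through the polarization maximum $\mathcal Q(\omega_d)$.
\begin{itemize}
\item For the oscillation bound, you combine the trivial bound $\mathcal Q\le 2\min_iP_i$ with $\max_iP_i\lesssim\sqrt d\,\mathcal Q$.
\item For the weights, you evaluate nodal maximality at the maximizer $q$. This gives the exact, constant-free inequality $P_j\ge\mathcal Q/\|q-\hat z_j\|$, which replaces the paper's pairwise lower bound with a single common center.
\item The cost is that your inverse-square sum is centered at the non-node $q$. So \cref{eq:fekete-local-riesz-two} cannot be quoted, and you must redo its lower half with dyadic annuli from \cref{thm:fekete-cap-discrepancy}.
\end{itemize}
That annulus argument is sound. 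With $r_0=K/\sqrt d$, the count in $A_k$ is at least $c\,4^kK^2-C(1+2^{k+1}K)\gtrsim4^kK^2$ for $K$ large. Each annulus then contributes $\gtrsim d$, and there are $\asymp\log d$ annuli. The argument uses only the counting estimate, with no separation input, although separation still enters your proof through \cref{eq:fekete-condition-weights}.

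\textbf{Common ingredients and trade-offs.} Both routes ultimately rest on \cref{lem:fekete-lagrange-local-stability}; yours uses it through the direction $\max_iP_i\lesssim\sqrt d\,\mathcal Q$ of \cref{prop:fekete-polarization-max-product}. Both also rest on \cref{eq:fekete-condition-weights}. The paper's version is shorter in context and yields the finer nodewise relation $1/w_i\gtrsim U_{2,i}/d$. Yours bounds all weights at once by $d/\sum_j\|q-\hat z_j\|^{-2}$ and makes the role of the polarization maximizer explicit.

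\textbf{Small point.} Your closing remark about small $d$ is unnecessary. The displayed ``universal bound'' is only a rewriting of \cref{eq:norm_cond}, not a lower bound. The observation $w_i\le1$ in Step~3, combined with $\mu_i^2\asymp1/w_i$, already handles the bounded degrees.
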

\nopagebreak[4]
\noindent
See the \hyperref[proof:fekete-oscillation-local-condition]{proof}
on page~\pageref{proof:fekete-oscillation-local-condition}.

In the notation of \cref{prop:condition-weights}, this also yields
\[
w_i\lesssim\frac1{\log(d+1)}.
\]
The constants are universal. 
The existence of a log-balanced Fekete family is not established here; see \cref{prob:log-balanced-existence}.
The following corollary describes the integral estimates and
local conditioning that such a family would satisfy.

\begin{corollary}\label{cor:log-balanced+Fekete-local-lower-bound}
Suppose that $\{\omega_d\}_{d\in\mathcal D}$ is a log-balanced
family of elliptic Fekete configurations with $d\ge2$.
Let $\{f_d\}_{d\in\mathcal D}$ be any family of polynomials
with the corresponding spherical roots. Then
\[
I_d^{1/2}\asymp e^{-\kappa d},
\qquad
I_{d,i}^{1/2}\asymp\sqrt{\log d}\,e^{-\kappa d},
\qquad
\cond{f_d}{z_i}\asymp\sqrt d,
\]
uniformly in $d$ and $1\le i\le d$. The constants may depend on the log-balancedness bounds of the family, but not on the node or degree.
\end{corollary}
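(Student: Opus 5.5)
The plan is to substitute the log-balancedness hypothesis $P_j\asymp\sqrt d\,e^{-\kappa d}$ into the two discrete formulas of \cref{th:fekete-discrete-Id-Ii}, which turns both into explicit sums; the conditioning estimate then follows from \cref{eq:norm_cond}.

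\emph{The global integral.} By \cref{th:fekete-discrete-Id-Ii},
\[
I_d\asymp d^{-2}\sum_jP_j^2\asymp d^{-2}\cdot d\cdot d\,e^{-2\kappa d}=e^{-2\kappa d}.
\]
This gives $I_d^{1/2}\asymp e^{-\kappa d}$, consistent with the lower bound $I_d^{1/2}\ge e^{-\kappa d}$ of \cref{th:fekete-exponential-scale-integrals}.

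\emph{The local integrals.} Again by \cref{th:fekete-discrete-Id-Ii},
\[
I_{d,i}\asymp \frac{P_i^2}{d}+\frac1{d^2}\sum_{j\ne i}\frac{P_j^2}{\|\hat z_i-\hat z_j\|^2}\asymp e^{-2\kappa d}+\frac{e^{-2\kappa d}}{d}\,U_{2,i}(\omega_d).
\]
By \cref{eq:fekete-local-riesz-two}, uniformly in $i$, $U_{2,i}=\tfrac14 d\log d+O(d)$, so $U_{2,i}\asymp d\log d$ for $d\ge3$. Hence $I_{d,i}\asymp e^{-2\kappa d}(1+\log d)\asymp \log d\,e^{-2\kappa d}$. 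The small values of $d$ (say $d=2$, where $\log d$ is still positive) are finitely many, so they can be absorbed into the constants.

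\emph{Conditioning.} By \cref{eq:norm_cond},
\[
\cond{f_d}{z_i}=\tfrac{\sqrt{d(d+1)}}2\,I_d^{1/2}/P_i\asymp d\cdot e^{-\kappa d}/(\sqrt d\,e^{-\kappa d})=\sqrt d .
\]
Alternatively, this is exactly \cref{cor:log-balanced-local-lower-bound} combined with $I_d^{1/2}\asymp e^{-\kappa d}$.

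There is no real obstacle here. The only points needing care are that the $O(d)$ error in \cref{eq:fekete-local-riesz-two} is uniform in $i$ and in the Fekete configuration, and the handling of small $d$. The resulting constants depend only on the log-balancedness constants together with the universal constants of the cited theorems.
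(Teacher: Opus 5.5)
Your proposal is correct and follows the paper's own proof essentially step for step: you substitute $P_j^2\asymp d\,e^{-2\kappa d}$ into \cref{th:fekete-discrete-Id-Ii}, use \cref{eq:fekete-local-riesz-two} to get $1+U_{2,i}(\omega_d)/d\asymp\log d$ uniformly in $i$, and read off the conditioning from \cref{eq:norm_cond}. One small imprecision: the asymptotic $U_{2,i}=\tfrac14 d\log d+O(d)$ yields the lower bound $U_{2,i}\gtrsim d\log d$ only for sufficiently large $d$, not for all $d\ge3$. Your absorption of the finitely many remaining degrees (for instance via $U_{2,i}\ge(d-1)/4$) already fixes this.
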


\nopagebreak[4]
\noindent
See the \hyperref[proof:log-balanced+Fekete-local-lower-bound]{proof}
on page~\pageref{proof:log-balanced+Fekete-local-lower-bound}.

Thus the logarithmic gap in \cref{cor:average-Ii-Id-log-gap} holds at every node under log-balancedness, not only after averaging.

Optimal-order global conditioning does not imply log-balancedness; the lower local bound in \cref{th:local-condition-implies-log-balanced} cannot be omitted.
The following counterexample is obtained by analyzing the first
parallel in the construction of Beltr\'an and Lizarte \cite{BeltranLizarte2022}.

\begin{theorem}\label{thm:optimal-condition-not-log-balanced}
There exists a family
$\{f_d\}_{d\in\mathcal D}$ of degree-$d$ polynomials with
simple roots, where
\[
\mathcal D:=\{4M^2:M\in\N,\ M\ge5\},
\]
such that
\[
\condtotal{f_d}
\le
\frac{19}{2}\sqrt{d+1},
\]
but the associated family of spherical root configurations
$\{\omega_d\}_{d\in\mathcal D}$ is not log-balanced. Indeed, there exist indices $i_M$ and a universal constant $c>0$ such that
\[
\frac{P_{i_M}}{\sqrt{d}\,e^{-\kappa d}}
\ge
cd^{1/6}\longrightarrow\infty.
\]
Moreover, the corresponding local condition numbers satisfy
\[
\cond{f_d}{z_{i_M}}\lesssim d^{1/3}.
\]
\end{theorem}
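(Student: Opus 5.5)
The plan is to take the family of Beltr\'an and Lizarte \cite{BeltranLizarte2022} directly, rather than build a new one. For $d=4M^2$ their construction places the roots on $2M$ parallels, symmetric under $x_3\mapsto-x_3$. Each parallel carries a rotated set of roots of unity: the parallel at height $h_k$ carries $r_k$ equally spaced points, with $r_k$ roughly linear in $k$ and $\sum_k r_k=d$. The resulting polynomial is, up to normalization, $\prod_k\bigl(z^{r_k}-\rho_k^{r_k}e^{\mathrm i\theta_k}\bigr)$. Their main theorem supplies the global bound $\condtotal{f_d}\le\frac{19}{2}\sqrt{d+1}$; I would quote it, checking only that the indexing $d=4M^2$, $M\ge5$, matches their hypotheses. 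So the whole task is the nodal products at the first parallel, the one nearest a pole, where only $O(1)$ points sit.

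\textbf{Step 1 (exact product formula).} Fix a node $\hat z_{i_M}$ on the first parallel. Group the factors of $P_{i_M}$ by parallel. For a circle of $r$ equally spaced points, the chordal formula after \cref{eq:inverse-stereographic-projection} gives
\[
\prod_{\zeta^r=1}|z-\rho\zeta e^{\mathrm i\theta/r}|=|z^r-\rho^re^{\mathrm i\theta}|,
\]
together with explicit factors $(1+|z|^2)^{r/2}(1+\rho^2)^{r/2}$. This yields a closed form
\[
\log P_{i_M}=\sum_k\Bigl[r_k\log\tfrac{2}{\sqrt{(1+\rho_1^2)(1+\rho_k^2)}}+\log\bigl|\rho_1^{r_k}e^{\mathrm i\phi_k}-\rho_k^{r_k}\bigr|\Bigr],
\]
with the $k=1$ term replaced by the derivative-type product $r_1\rho_1^{r_1-1}$ over the remaining points of the same parallel. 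For $k\ge2$ the ratios $\rho_1/\rho_k$ are bounded away from $1$ geometrically fast once $r_k$ is large. Hence $\log|\rho_1^{r_k}-\cdots|$ equals $r_k\log\max(\rho_1,\rho_k)$ up to an error that decays exponentially in $r_k$. Summing these errors gives $O(1)$.

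\textbf{Step 2 (comparison with $-\kappa d+\frac12\log d$).} What remains is a Riemann sum in the heights $h_k$ of the potential $\log\frac1{\|p-q\|}$ of $\sigma$ evaluated at $q=\hat z_{i_M}$. By \cref{def:kappa}, the integral of this potential gives $-\kappa d$. I would evaluate the discretization error by Euler--Maclaurin in $k$, using the explicit heights from \cite{BeltranLizarte2022}. The expected outcome is
\[
\log P_{i_M}=-\kappa d+\tfrac12\log d+\tfrac16\log d+O(1).
\]
The extra $\frac16\log d$ should come from the mismatch between the area each parallel receives and $1/d$, near the pole, where the first parallel's radius and point count are not at the equilibrium scale. \textbf{This bookkeeping is the main obstacle.} One has to track the boundary terms at $k=1$ precisely enough to isolate a positive $\frac16\log d$, and the $O(1)$ constants do not matter, so only the logarithmic boundary contributions need care. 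I would first try to write $\sum_k r_k\log(1+\rho_k^2)$ and $\sum_k r_k\log\max(\rho_1,\rho_k)$ as $d\int(\cdot)\,\mathrm d\sigma$ plus explicit endpoint corrections. Exponentiating then gives $P_{i_M}\ge c\,d^{1/6}\sqrt d\,e^{-\kappa d}$, which contradicts the upper half of \cref{def:log-balanced}.

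\textbf{Step 3 (local condition number).} By \cref{eq:norm_cond},
\[
\cond{f_d}{z_{i_M}}=\frac{\sqrt{d(d+1)}}2\,\frac{I_d^{1/2}}{P_{i_M}}.
\]
The global bound from \cite{BeltranLizarte2022} lets me apply \cref{prop:optcond_Id}, which gives $I_d^{1/2}\lesssim e^{-\kappa d}$. Combining this with Step 2 yields
\[
\cond{f_d}{z_{i_M}}\lesssim \frac{d\,e^{-\kappa d}}{d^{1/6}\sqrt d\,e^{-\kappa d}}=d^{1/3}.
\]
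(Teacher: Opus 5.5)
\textbf{Gap: Step~2 is asserted, not proved.} Your route otherwise matches the paper's. You use the same Beltr\'an--Lizarte polynomial ($4j$ roots on the parallel at height $h_j=1-j^2/M^2$, $4M$ on the equator) and quote the global bound from \cite[Theorem~1.3]{BeltranLizarte2022}. You reduce $\log P_{i_M}$ to the parallel-averaged potential $S_d(q)=\sum_k r_k\mathcal A_{h_k}(q)$ plus a same-parallel correction, where $\mathcal A_h(q)$ is the mean of $\log\|q-p\|$ over the parallel at height $h$. Your Step~3 is identical to the paper's.

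Your Step~1 is sound. Up to $O(1)$ it reproduces what the paper takes from \cite[Proposition~6.3]{BeltranLizarte2022}: comparing your $k=1$ term with $r_1\mathcal A_{h_1}(q)$ leaves $\log\frac{r_1(1+\rho_1^2)}{2\rho_1}=\log(2\sqrt2\,M)+o(1)$, which is the $\frac12\log d$.

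The problem is Step~2. The bound $S_d(q)+\kappa d\ge\frac16\log d-O(1)$, uniform in $M$, is the entire content of the failure of log-balancedness. You only announce it as the ``expected outcome''. Nothing in the proposal excludes that this discretization error is $O(1)$, or even negative of order $\log d$. The paper does not compute it either. It imports \cite[Proposition~4.3]{BeltranLizarte2022} with $\ell=1$, namely $S_d(q)+\kappa d\ge-1+\frac13\log\frac{M+1}{2}$ for $q$ on the first northern parallel.

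If you want to prove it yourself, organize it as follows. For $q$ at height $x=h_1$,
\[
\mathcal A_h(q)=\frac12\log\bigl((1+\max\{x,h\})(1-\min\{x,h\})\bigr).
\]
Every parallel lies at height at most $x$, and $\frac d4(1-x)=1$, so
\[
S_d(q)+\kappa d=\frac12\sum_k r_k\log(1-h_k)-\frac d4\int_{-1}^1\log(1-h)\,\mathrm{d}h+O(1).
\]
The southern parallels contribute $O(1)$ by Euler--Maclaurin, because there the summand $2j\log(2-j^2/M^2)$ is smooth in $j$. The northern part, equator included, equals $\sum_{j=1}^{M}4j\log\frac jM+M^2$. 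The hyperfactorial asymptotics $\sum_{j\le M}j\log j=\bigl(\frac{M^2}2+\frac M2+\frac1{12}\bigr)\log M-\frac{M^2}4+O(1)$ turn this into $\frac13\log M+O(1)=\frac16\log d+O(1)$.

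A first-order (trapezoidal) comparison of sum and integral does not see this gain. The $+2\log M$ from the empty polar cap cancels against the half-weight endpoint term $-2\log M$ at $j=1$. What survives is the $\frac1{12}$-order Euler--Maclaurin term at $j=1$, multiplied by the four points per unit of $j$. So this step cannot be left as heuristic boundary bookkeeping. With this computation, or with the citation, your Steps~1 and~3 complete the proof.
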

\nopagebreak[4]
\noindent
See the \hyperref[proof:optimal-condition-not-log-balanced]{proof}
on page~\pageref{proof:optimal-condition-not-log-balanced}.


\section{\texorpdfstring{Fekete configurations on $\mathbb{PC}^1$}{Fekete configurations on the projective line}}\label{sec:EFP_FC}

We recall the line-bundle formulation of Fekete points on the Riemann
sphere. We identify \(\mathbb{PC}^1\) with \(\mathbb C\cup\{\infty\}\) by writing
\([u:v]\in\mathbb{PC}^1\) and using the affine coordinate \(z=v/u\) on the chart
\(u\neq 0\); the point \([0:1]\) corresponds to \(z=\infty\). Composing this identification with the inverse stereographic
projection in \cref{eq:inverse-stereographic-projection}
gives the identification \(\mathbb{PC}^1\simeq\Sphere^2\),
explicitly
\[
[u:v]\longmapsto
\frac{1}{|u|^2+|v|^2}
\bigl(
2\operatorname{Re}(\overline u v),
2\operatorname{Im}(\overline u v),
|v|^2-|u|^2
\bigr).
\]
Let \(\mathcal O(1)\) denote the hyperplane line bundle over
\(\mathbb{PC}^1\), and set
\(\mathcal O(k):=\mathcal O(1)^{\otimes k}\).
For \(k\ge0\), the space of global holomorphic sections
\[
H^0(\mathbb{PC}^1,\mathcal O(k))
\]
is naturally identified with the space of homogeneous
polynomials of degree \(k\) in two variables \(U,V\), with basis
\[
S_\ell(U,V)=U^{k-\ell}V^\ell,\qquad \ell=0,\ldots,k.
\]
We endow \(\mathcal O(1)\) with the Fubini--Study Hermitian metric, whose induced metric on \(\mathcal O(k)\) is denoted by \(k\phi_{\rm FS}\); in homogeneous coordinates this means that, for a homogeneous polynomial \(S\) of degree \(k\),
\[
|S([u:v])|_{k\phi_{\rm FS}}
=
\frac{|S(u,v)|}{\bigl(|u|^2+|v|^2\bigr)^{k/2}}.
\]
Here $|S(u,v)|$ denotes the usual complex modulus; homogeneity
ensures that the quotient is independent of the chosen
representative of $[u:v]$.
For $A\in\mathrm{SU}(2)$, set $S_A:=S\circ A^{-1}$ and
write $Ap=[A(u,v)]$ when $p=[u:v]$.
Since $A$ preserves the Euclidean norm on $\C^2$,
\begin{equation}\label{eq:fubini-study-unitary-invariance}
|S_A(Ap)|_{k\phi_{\rm FS}}
=
|S(p)|_{k\phi_{\rm FS}}.
\end{equation}
The induced rotation of $\Sphere^2$ preserves $\sigma$.
Consequently, this simultaneous transformation of the section
and its argument preserves its supremum and $L^2(\sigma)$ norms.

A Fekete configuration for \((\mathcal O(k),k\phi_{\rm FS})\) is a set of \(d=k+1=\dim H^0(\mathbb{PC}^1,\mathcal O(k))\) points maximizing the Fubini--Study weighted Vandermonde determinant 
\[
\left|
\det\bigl(S_\ell(p_j)\bigr)_{\ell=0,\ldots,k;\,j=1,\ldots,d}
\right|_{k\phi_{\rm FS}}.
\]
This notion is independent of the chosen basis, since a change of basis only
multiplies the determinant by a non-zero constant.

\begin{lemma}\label{lem:elliptic-fekete-as-line-bundle-fekete}
Let $\omega_d=\{\hat{z}_1,\ldots,\hat{z}_d\}\subset\Sphere^2$,
and let $p_j\in\mathbb{PC}^1$ correspond to $\hat{z}_j$
for $j=1,\ldots,d$. Set $k=d-1$.
Then $\omega_d$ is an elliptic Fekete configuration if and
only if $\{p_1,\ldots,p_d\}$ is a Fekete configuration for
$(\mathcal O(k),k\phi_{\rm FS})$.
\end{lemma}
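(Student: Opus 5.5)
The plan is to compute the weighted Vandermonde determinant explicitly and show that it equals a constant times the square root of the product of pairwise chordal distances, that is, a constant times $\exp(-\Energy(\omega_d)/4)$. Since the constant does not depend on the points, the two maximization problems have the same maximizers.

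First I would choose representatives $(u_j,v_j)$ of the points $p_j$ and use the monomial basis $S_\ell=U^{k-\ell}V^\ell$. The determinant $\det(u_j^{k-\ell}v_j^\ell)$ is a homogeneous Vandermonde determinant. Factoring $u_j^k$ out of each column when all $u_j\neq0$, and using the standard identity otherwise, gives
\[
\det\bigl(u_j^{k-\ell}v_j^\ell\bigr)=\pm\prod_{1\le i<j\le d}(u_iv_j-u_jv_i).
\]
This holds as a polynomial identity in the $2d$ variables, so it covers points at infinity as well.

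Next I would apply the Fubini--Study weight. The $j$-th column carries the factor $(|u_j|^2+|v_j|^2)^{-k/2}$, and each index $j$ appears in exactly $k=d-1$ of the pairs $\{i,j\}$. Distributing these factors over the pairs gives
\[
\bigl|\det\bigr|_{k\phi_{\rm FS}}
=\prod_{i<j}\frac{|u_iv_j-u_jv_i|}{\sqrt{|u_i|^2+|v_i|^2}\sqrt{|u_j|^2+|v_j|^2}}.
\]
Each factor is independent of the chosen representatives, since both numerator and denominator are homogeneous of degree one in each of $(u_i,v_i)$ and $(u_j,v_j)$.

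The main step is identifying each factor with $\tfrac12\|\hat z_i-\hat z_j\|$.
\begin{itemize}
\item When both points are finite, take $u=1$ and $v=z$. The factor becomes $|z_j-z_i|/\sqrt{(1+|z_i|^2)(1+|z_j|^2)}$, which equals $\tfrac12\|\hat z_i-\hat z_j\|$ by the chordal-distance formula from stereographic projection.
\item When one point is $[0:1]$, the factor becomes $|u_i|/\sqrt{|u_i|^2+|v_i|^2}$. This equals $1/\sqrt{1+|z_i|^2}$, and a direct check with \cref{eq:inverse-stereographic-projection} gives $\|\hat z_i-N\|=2/\sqrt{1+|z_i|^2}$.
\item Alternatively, the point at infinity can be handled by continuity, or by the rotation invariance \cref{eq:fubini-study-unitary-invariance}.
\end{itemize}

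Combining the steps, the weighted determinant equals
\[
2^{-d(d-1)/2}\prod_{i<j}\|\hat z_i-\hat z_j\|=2^{-d(d-1)/2}\exp\bigl(-\Energy(\omega_d)/2\bigr)^{1/2},
\]
because $\Energy$ counts ordered pairs. The right-hand side is a strictly increasing function of $-\Energy$, so maximizing the determinant is equivalent to minimizing $\Energy$.

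Coincident points make both quantities vanish, so they are excluded from the maximizers on either side. The lemma follows.

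The only delicate point is the bookkeeping of the homogeneous Vandermonde identity and of the weight exponents (each point appears in $d-1$ pairs). Both are routine checks.
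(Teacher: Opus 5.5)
Your argument is correct and is essentially the paper's proof: the homogeneous Vandermonde identity, distributing the Fubini--Study factors over the $d-1$ pairs containing each point, and the identification $\|\hat z_i-\hat z_j\|=2[p_i,p_j]$, which you verify explicitly in charts, including the point at infinity, where the paper simply asserts it. One harmless slip: since $\Energy$ counts ordered pairs, $\prod_{i<j}\|\hat z_i-\hat z_j\|=\exp(-\Energy(\omega_d)/2)$, not $\exp(-\Energy(\omega_d)/4)$; your conclusion uses only monotonicity, so nothing changes.
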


\nopagebreak[4]
\noindent
See the \hyperref[proof:elliptic-fekete-as-line-bundle-fekete]{proof}
on page~\pageref{proof:elliptic-fekete-as-line-bundle-fekete}.

In the affine coordinate $z$, the Fubini--Study weight is
$\phi_{\rm FS}(z)=\frac12\log(1+|z|^2)$, and
\[
i\partial\bar\partial\phi_{\rm FS}
=\frac{i\,dz\wedge d\bar z}{2(1+|z|^2)^2}>0.
\]
Thus $(\mathcal O(1),\phi_{\rm FS})$ is positive.
The bundle
\(\mathcal O(k)=\mathcal O(1)^{\otimes k}\) carries the induced metric
\(k\phi_{\rm FS}\), whose curvature is \(k\,i\partial\bar\partial\phi_{\rm FS}\).

\section{Proofs}\label{sec:proofs}

\subsection{Technical lemmas}\label{sec:proofs:elementary}

We first verify the line-bundle correspondence, then isolate three ingredients used throughout: one-point extremality, the mean of the logarithmic kernel, and the norm comparison for a section on the Riemann sphere.

\begin{proof}[Proof of \cref{lem:elliptic-fekete-as-line-bundle-fekete}]
\leavevmode\phantomsection
\label{proof:elliptic-fekete-as-line-bundle-fekete}
Since the ordered product in the definition of \(M_d\) is the square of the
unordered product, maximizing
\[
\prod_{\substack{1\le i,j\le d\\ i\neq j}}
\|\hat{z}_i-\hat{z}_j\|
\]
is equivalent to maximizing
\[
\prod_{1\le i<j\le d}\|\hat{z}_i-\hat{z}_j\|.
\]
Let \(p_j=[u_j:v_j]\in\mathbb{PC}^1\) be the point corresponding to
\(\hat{z}_j\), and write
\[
\|(u,v)\|:=\bigl(|u|^2+|v|^2\bigr)^{1/2}.
\]
The projective chordal distance on \(\mathbb{PC}^1\) is
\[
[p_i,p_j]
:=
\frac{|u_i v_j-u_j v_i|}
{\|(u_i,v_i)\|\,\|(u_j,v_j)\|}.
\]
Under the identification \(\mathbb{PC}^1\simeq\Sphere^2\), this satisfies
\begin{equation}\label{eq:projective-chordal-distance}
\|\hat{z}_i-\hat{z}_j\|
=2[p_i,p_j]
=\frac{2|u_i v_j-u_j v_i|}
{\|(u_i,v_i)\|\,\|(u_j,v_j)\|}.
\end{equation}
Therefore maximizing the product of Euclidean chordal distances on
\(\Sphere^2\) is equivalent to maximizing
\[
\prod_{1\le i<j\le d}[p_i,p_j].
\]
Now set \(k=d-1\). Consider the monomial basis of
\(H^0(\mathbb{PC}^1,\mathcal O(k))\),
\[
S_\ell(U,V)=U^{k-\ell}V^\ell,
\qquad
\ell=0,\ldots,k.
\]
For \(d=k+1\) points \(p_1,\ldots,p_d\), the homogeneous Vandermonde determinant is,
up to sign,
\[
\det\bigl(S_\ell(u_j,v_j)\bigr)_{\ell=0,\ldots,k;\ j=1,\ldots,d}
=
\prod_{1\le i<j\le d}(u_i v_j-u_j v_i).
\]
The Fubini--Study metric on \(\mathcal O(k)\) gives
\[
\left|
\det\bigl(S_\ell(p_j)\bigr)
\right|_{k\phi_{\rm FS}}
=
\frac{
\displaystyle
\prod_{1\le i<j\le d}|u_i v_j-u_j v_i|
}{
\displaystyle
\prod_{j=1}^d \|(u_j,v_j)\|^k
}.
\]
Since \(k=d-1\), each factor \(\|(u_j,v_j)\|\) occurs exactly \(d-1\) times
in
\[
\prod_{1\le i<j\le d}
\|(u_i,v_i)\|\,\|(u_j,v_j)\|.
\]
Hence
\[
\left|
\det\bigl(S_\ell(p_j)\bigr)
\right|_{k\phi_{\rm FS}}
=
\prod_{1\le i<j\le d}[p_i,p_j].
\]
Thus the maximizers of the Fubini--Study weighted Vandermonde determinant for
\(\mathcal O(k)\) are exactly the elliptic Fekete configurations, up to the
irrelevant constant factor \(2^{d(d-1)/2}\) coming from
\(\|\hat{z}_i-\hat{z}_j\|=2[p_i,p_j]\).
\end{proof}

\begin{lemma}\label{lem:fekete-node-is-max}
Let $\omega_d=\{\hat{z}_1,\ldots,\hat{z}_d\}\subset\Sphere^2$ be an elliptic Fekete configuration. Then, for every $1\le i\le d$,
\[
\mathcal Q(\omega_d\setminus\{\hat{z}_i\})
=
H_{\omega_d\setminus\{\hat{z}_i\}}(\hat{z}_i)
=
P_i.
\]
\end{lemma}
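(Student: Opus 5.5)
The plan is a one-point competitor argument that uses only the global maximality of the Fekete product. Fix $i$ and write $\omega':=\omega_d\setminus\{\hat z_i\}$. The second equality $H_{\omega'}(\hat z_i)=P_i$ is just the definition of $P_i$, so it remains to show that $\hat z_i$ maximizes $H_{\omega'}$ over $\Sphere^2$. Since $H_{\omega'}$ is continuous on the compact sphere, the maximum $\mathcal Q(\omega')$ exists, and trivially $\mathcal Q(\omega')\ge H_{\omega'}(\hat z_i)=P_i$.

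For the reverse inequality I will split the ordered product according to whether a pair involves the index $i$. For any $p\in\Sphere^2$, let $\omega^{(p)}$ be the configuration obtained by replacing $\hat z_i$ with $p$. Then
\[
\prod_{\substack{k\ne l}}\|\hat z_k-\hat z_l\|\Big|_{\omega^{(p)}}
=H_{\omega'}(p)^2\prod_{\substack{k\ne l\\ k,l\ne i}}\|\hat z_k-\hat z_l\|.
\]
The factor is squared because both ordered pairs $(i,j)$ and $(j,i)$ contribute. The second factor does not depend on $p$, and it is strictly positive because the points of $\omega_d$ are distinct.

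Now choose $p$ with $H_{\omega'}(p)=\mathcal Q(\omega')$. If $p$ coincides with some $\hat z_j$, $j\neq i$, then $H_{\omega'}(p)=0$; this is impossible, since $\mathcal Q(\omega')\ge P_i>0$. Hence $\omega^{(p)}$ consists of $d$ distinct points and is an admissible competitor in the definition of $M_d$. Maximality of $\omega_d$ then gives
\[
H_{\omega'}(p)^2\cdot\mathrm{const}\le M_d=P_i^2\cdot\mathrm{const},
\]
so $\mathcal Q(\omega')\le P_i$. Together with the trivial bound, this yields $\mathcal Q(\omega')=P_i$.

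The only delicate point is admissibility of the competitor, which requires positivity of $P_i$ and of the remaining product; both follow from distinctness. Even if coincident points were allowed in the definition of $M_d$, the argument would be unaffected, since such configurations give product zero.
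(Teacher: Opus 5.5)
Your proposal is correct and follows the same approach as the paper: replace $\hat z_i$ by an arbitrary $p$, use maximality of the Fekete product, and take the supremum over $p$. You make explicit the details the paper leaves implicit, namely the squared factor from ordered pairs, the positivity of the $p$-independent factor, and the admissibility of the competitor.
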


\begin{proof}
\leavevmode\phantomsection
\label{proof:fekete-node-is-max}
Fix $p\in\Sphere^2$. Replacing $\hat{z}_i$ by $p$ in the Fekete configuration and using the extremality of $\omega_d$, we get
\[
H_{\omega_d\setminus\{\hat{z}_i\}}(p)
\le
H_{\omega_d\setminus\{\hat{z}_i\}}(\hat{z}_i)
=
P_i.
\]
Taking the supremum over $p\in\Sphere^2$ gives the claim.
\end{proof}

\begin{lemma}\label{lem:product-jensen}
For any configuration of $d$ spherical points, with repetitions allowed,
\[
\int_{\Sphere^2}\log H_{\omega_d}(p)\,\mathrm{d}\sigma(p)=-\kappa d,
\qquad
I_d^{1/2}\ge e^{-\kappa d},
\qquad
\mathcal Q(\omega_d)\ge e^{-\kappa d}.
\]
\end{lemma}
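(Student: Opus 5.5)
The plan is to prove the integral identity first and then read off the two inequalities from it, using Jensen's inequality and the fact that an average is at most a maximum. All three statements reduce to the single-point identity
\[
\int_{\Sphere^2}\log\|p-q\|\,\mathrm{d}\sigma(p)=-\kappa,\qquad q\in\Sphere^2,
\]
which is the definition \cref{def:kappa}. It is independent of $q$ by rotation invariance of $\sigma$.

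First, I would justify \cref{def:kappa} with the standard pushforward computation. Under $\sigma$, the chordal distance $t=\|p-q\|$ has density $t/2$ on $[0,2]$; this is the same fact used in \cref{eq:continuous-riesz-potential}. Hence
\[
\int_{\Sphere^2}\log\|p-q\|\,\mathrm{d}\sigma(p)=\frac12\int_0^2 t\log t\,\mathrm{d}t=\frac12\bigl(2\log2-1\bigr)=\log2-\frac12=-\kappa.
\]
The integrand $\log\|p-q\|$ is integrable, since $t\log t$ is integrable near $0$. Then, because $\log H_{\omega_d}(p)=\sum_{j}\log\|p-\hat z_j\|$ is a finite sum of integrable functions, linearity gives
\[
\int\log H_{\omega_d}\,\mathrm{d}\sigma=-\kappa d.
\]
Repeated points cause no difficulty, since each factor is treated separately. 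The set where $H_{\omega_d}=0$ is finite and therefore $\sigma$-null.

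Second, for $I_d$ I apply Jensen's inequality to the convex function $\exp$ and the probability measure $\sigma$:
\[
I_d=\int H_{\omega_d}^2\,\mathrm{d}\sigma=\int e^{2\log H_{\omega_d}}\,\mathrm{d}\sigma\ge \exp\Bigl(2\int\log H_{\omega_d}\,\mathrm{d}\sigma\Bigr)=e^{-2\kappa d}.
\]
Taking square roots gives $I_d^{1/2}\ge e^{-\kappa d}$. Jensen is legitimate here because $\log H_{\omega_d}$ is integrable, with value $-\infty$ only on a null set.

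Third, for $\mathcal Q(\omega_d)$ I use $\log\mathcal Q(\omega_d)\ge\int\log H_{\omega_d}\,\mathrm{d}\sigma=-\kappa d$, since $\sigma$ is a probability measure and $\log H_{\omega_d}\le\log\mathcal Q(\omega_d)$ pointwise. Alternatively, it follows from $\mathcal Q(\omega_d)^2\ge I_d$. The maximum defining $\mathcal Q$ exists because $H_{\omega_d}$ is continuous on the compact sphere.

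There is no real obstacle. The only point requiring care is integrability of the logarithmic singularity, so that linearity and Jensen apply with possibly infinite values on a null set, and this is settled by the explicit radial density.
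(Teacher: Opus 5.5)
Your proof is correct and follows essentially the paper's argument: linearity applied to the single-point identity \cref{def:kappa}, Jensen's inequality for $I_d$, and the bound $\mathcal Q(\omega_d)\ge I_d^{1/2}$, which the paper uses and you give as an alternative to your equally valid primary route $\log\mathcal Q(\omega_d)\ge\int\log H_{\omega_d}\,\mathrm{d}\sigma$. Your explicit check of $\kappa=\frac12-\log2$ via the radial density $t/2$ is a small addition that the paper takes as given.
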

\begin{proof}
\leavevmode\phantomsection
\label{proof:product-jensen}
The logarithmic singularity is integrable. By \cref{def:kappa}, each term $\log\|p-\hat{z}_j\|$ has integral $-\kappa$. Summing proves the identity. Jensen's inequality gives
\[
\log I_d\ge\int_{\Sphere^2}\log H_{\omega_d}(p)^2\,\mathrm{d}\sigma(p)=-2\kappa d.
\]
Finally, since $H_{\omega_d}(p)\le\mathcal Q(\omega_d)$
and $\sigma(\Sphere^2)=1$,
\[
I_d=\int_{\Sphere^2}H_{\omega_d}(p)^2\,\mathrm{d}\sigma(p)
\le\mathcal Q(\omega_d)^2.
\]
Thus $\mathcal Q(\omega_d)\ge I_d^{1/2}\ge e^{-\kappa d}$.
\end{proof}

\begin{lemma}\label{lem:spherical-nikolskii}
For any configuration $\omega_d$ of $d\ge1$ spherical points,
possibly repeated,
\[
\mathcal Q(\omega_d)\le\sqrt{d+1}\,I_d^{1/2}.
\]
The constant $\sqrt{d+1}$ is optimal, and equality holds
if and only if all the points coincide.
\end{lemma}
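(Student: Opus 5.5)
We work with the homogeneous form and exploit rotation invariance. Fix a point $p_0\in\Sphere^2$ where $H_{\omega_d}$ attains $\mathcal Q(\omega_d)$. By \cref{eq:fubini-study-unitary-invariance}, a rotation preserves both $\mathcal Q$ and $I_d$, so we may assume that $p_0$ is the south pole, which corresponds to $z=0$, i.e.\ $[1:0]$. Write $F=F_{\omega_d}$ with representatives normalized so that $\|(u_j,v_j)\|=1$. By \cref{eq:projective-chordal-distance},
\[
H_{\omega_d}([u:v])=2^d\,\frac{|F(u,v)|}{(|u|^2+|v|^2)^{d/2}}=2^d\,|F([u:v])|_{d\phi_{\rm FS}}.
\]
Hence $I_d=4^d\int|F|^2_{d\phi_{\rm FS}}\,\mathrm d\sigma$ and $\mathcal Q(\omega_d)=2^d|F(1,0)|$. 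The inequality therefore reduces to a Nikolskii-type bound for sections of $\mathcal O(d)$, evaluated at a single point.

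We expand $F(U,V)=\sum_{k=0}^d a_kU^{d-k}V^k$. In the affine chart $z=v/u$ we have $\mathrm d\sigma=\frac{\mathrm dA(z)}{\pi(1+|z|^2)^2}$. The monomials are orthogonal in $L^2(\sigma)$ with the weight $(1+|z|^2)^{-d}$, and a Beta integral gives
\[
\int_{\C}\frac{|z|^{2k}}{(1+|z|^2)^{d+2}}\,\frac{\mathrm dA}{\pi}=\frac{1}{(d+1)\binom dk}.
\]
Consequently
\[
\int|F|^2_{d\phi_{\rm FS}}\,\mathrm d\sigma=\frac{1}{d+1}\sum_{k}\frac{|a_k|^2}{\binom dk}=\frac{\|F\|_{\rm BW}^2}{d+1}.
\]
On the other hand, $|F(1,0)|=|a_0|\le\|F\|_{\rm BW}$ because $\binom d0=1$. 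Combining the two gives $\mathcal Q(\omega_d)^2\le 4^d\|F\|_{\rm BW}^2=(d+1)I_d$, which is the claimed inequality.

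For the equality case, equality forces $a_k=0$ for every $k\ge1$. Then $F=a_0U^d$, so all $d$ roots sit at the single point $[0:1]$, the antipode of $p_0$. Conversely, if all points coincide at some $q$, then $H=\|p-q\|^d$. A direct check using $\int\|p-q\|^{2d}\,\mathrm d\sigma=\frac12\int_0^2t^{2d+1}\,\mathrm dt=\frac{4^d}{d+1}$ and $\max H=2^d$ gives equality, which also shows that the constant $\sqrt{d+1}$ is optimal.

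The only delicate step is the normalization bookkeeping. One must track the factor $2^d$ from the chordal metric, the factor $1/\pi$ in $\sigma$, and the Beta-function evaluation $\int_0^\infty r^{2k}(1+r^2)^{-d-2}\,2r\,\mathrm dr=B(k+1,d-k+1)$. These must combine to exactly $\frac{1}{(d+1)\binom dk}$, and this is what produces the sharp constant. The rest of the argument is just Cauchy–Schwarz in its most trivial form, namely bounding a single coefficient by the full norm.
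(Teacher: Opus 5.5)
Your proposal is correct and follows essentially the paper's argument. You rotate a maximizing point to $[1:0]$, use the Beta integrals to get $\int_{\Sphere^2}|F|_{d\phi_{\rm FS}}^2\,\mathrm{d}\sigma=\|F\|_{\mathrm{BW}}^2/(d+1)\ge|a_0|^2/(d+1)$, and read off the equality case from $a_k=0$ for $k\ge1$.

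The differences are cosmetic. You track the factor $2^d$ explicitly, and you verify the converse of the equality case directly via $\int\|p-q\|^{2d}\,\mathrm{d}\sigma=4^d/(d+1)$. The paper additionally notes that the constant remains optimal among distinct points, by approximating the coincident configuration.
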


\begin{proof}
\leavevmode\phantomsection
\label{proof:spherical-nikolskii}
Let
\[
F(U,V):=\prod_{j=1}^d(u_jV-v_jU),
\]
where $[u_j:v_j]$ represents $\hat z_j$.
By \cref{eq:projective-chordal-distance}, $H_{\omega_d}$
is a positive constant multiple of $|F|_{d\phi_{\rm FS}}$.
It therefore suffices to compare the supremum and $L^2$
norms of this section.

Choose a point $p_*$ where $|F|_{d\phi_{\rm FS}}$ attains
its maximum, and choose $A\in\mathrm{SU}(2)$ with
$Ap_*=[1:0]$. Apply the induced rotation to the entire
configuration and replace $F$ by $F\circ A^{-1}$.
By \cref{eq:fubini-study-unitary-invariance} and invariance
of $\sigma$, both norms are unchanged.
Denote the transformed section again by $F$.

In the affine coordinate $z=v/u$, write
\[
g(z):=F(1,z)=\sum_{k=0}^d a_kz^k.
\]
The normalized spherical measure is
$\mathrm{d}A(z)/(\pi(1+|z|^2)^2)$, where $\mathrm{d}A$ is planar area.
Writing $z=re^{i\theta}$, integration over $\theta$
eliminates the mixed terms in $|g(z)|^2$.
The substitution $u=r^2$ and evaluation of the resulting
beta integrals give
\begin{align*}
\int_{\Sphere^2}|F|_{d\phi_{\rm FS}}^2\,\mathrm{d}\sigma
&=\frac1\pi\int_{\C}
\frac{|g(z)|^2}{(1+|z|^2)^{d+2}}\,\mathrm{d}A(z)\\
&=\sum_{k=0}^d |a_k|^2
\int_0^\infty\frac{u^k}{(1+u)^{d+2}}\,\mathrm{d}u\\
&=\frac1{d+1}\sum_{k=0}^d
\frac{|a_k|^2}{\binom dk}
\ge\frac{|a_0|^2}{d+1}.
\end{align*}
The same computation gives
\begin{equation}\label{eq:bombieri-spherical-norm}
\int_{\Sphere^2}|F|_{d\phi_{\rm FS}}^2\,\mathrm{d}\sigma
=\frac{\|F\|_{\mathrm{BW}}^2}{d+1}.
\end{equation}
Since $[1:0]$ is a maximizing point,
\[
\max_{p\in\Sphere^2}|F(p)|_{d\phi_{\rm FS}}=|a_0|.
\]
The preceding inequality proves the desired estimate.

In these same coordinates, equality holds if and only if
$a_k=0$ for every $k\ge1$, so that $F(U,V)=a_0U^d$.
This is equivalent to all roots coinciding at $[0:1]$,
the antipode of the maximizing point.
Undoing the rotation shows that equality holds precisely
when all spherical points coincide.
The constant remains optimal for distinct points, since
such configurations can approach a configuration with all
points coincident.
\end{proof}

\begin{lemma}\label{lem:fekete-lagrange-local-stability}
There exist universal constants $a,b>0$ such that, for every elliptic Fekete configuration $\omega_d=\{\hat{z}_1,\ldots,\hat{z}_d\}$ with $d\ge2$,
\[
H_{\omega_d\setminus\{\hat{z}_j\}}(p)\ge aP_j
\qquad\text{whenever}\qquad
\|p-\hat{z}_j\|\le \frac{b}{\sqrt d}.
\]
\end{lemma}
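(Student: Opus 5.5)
The plan is to obtain the estimate from \cref{lem:fekete-node-is-max} alone, by a Cauchy-estimate (Bernstein-type) argument for the section representing $H_{\omega_d\setminus\{\hat z_j\}}$. By that lemma, $H:=H_{\omega_d\setminus\{\hat z_j\}}$ attains its maximum $P_j$ on $\Sphere^2$ at $\hat z_j$. The key point is that a product of $k=d-1$ chordal distances which is maximal at a point cannot drop by more than a constant factor within a neighbourhood of radius $\asymp k^{-1/2}$ of that point. Fekete optimality is used only through this maximality.

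\emph{Step 1: normalisation.} As in the proof of \cref{lem:spherical-nikolskii}, I write $H$ as a constant multiple of $|F|_{k\phi_{\rm FS}}$ with $F(U,V)=\prod_{l\ne j}(u_lV-v_lU)$. I then rotate by some $A\in\mathrm{SU}(2)$ so that $\hat z_j$ corresponds to $[1:0]$, i.e. to the affine point $z=0$. By \cref{eq:fubini-study-unitary-invariance} this does not change the values of $H$ at corresponding points. After rescaling $F$ by a positive constant, in the affine chart we have
\[
H(\hat z)=\frac{|g(z)|}{(1+|z|^2)^{k/2}},\qquad g(z)=F(1,z),
\]
with $|g(0)|=P_j$ and $\sup_{\C}|g(z)|(1+|z|^2)^{-k/2}=P_j$.

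\emph{Step 2: bound $g$ on a disc of radius $R=k^{-1/2}$.} For $|z|\le R$ we have $|g(z)|\le P_j(1+1/k)^{k/2}\le\sqrt e\,P_j$. Apply the Schwarz lemma to the function $(g-g(0))/(2\sqrt e\,P_j)$, which vanishes at $0$ and has modulus at most $1$ on $|z|\le R$. This gives
\[
|g(z)-g(0)|\le 2\sqrt e\,P_j\,\sqrt k\,|z|,\qquad |z|\le R.
\]
Hence, for $|z|\le \tau/\sqrt k$ with $\tau:=1/(4\sqrt e)$, we get $|g(z)|\ge P_j/2$.

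\emph{Step 3: control the weight and convert to chordal distance.} For $|z|\le 1/\sqrt k$ the weight satisfies
\[
(1+|z|^2)^{-k/2}\ge e^{-k|z|^2/2}\ge e^{-1/2}.
\]
Therefore $H(\hat z)\ge \tfrac12 e^{-1/2}P_j$ whenever $|z|\le\tau/\sqrt k$. The chordal distance to $\hat z_j$, located at affine coordinate $0$, is $\|\hat z-\hat z_j\|=2|z|/\sqrt{1+|z|^2}$. For $|z|\le1$ this is at least $\sqrt2|z|$, so $|z|\le\|\hat z-\hat z_j\|/\sqrt2$. It therefore suffices to require $\|p-\hat z_j\|\le \sqrt2\,\tau/\sqrt d$, using $k\le d$. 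We may thus take $b=\sqrt2\tau$ and $a=\tfrac12e^{-1/2}$, both universal.

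The only delicate point is the bookkeeping between the disc radius, the weight factor and the chordal metric, which must keep all constants independent of $d$. Since the Schwarz-lemma step is scale-invariant once the disc radius is set to $k^{-1/2}$, I do not expect a genuine obstacle. The case $d=2$, where $k=1$, is covered by the same computation.
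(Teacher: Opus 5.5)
Your argument is correct and is essentially the paper's proof: you rotate $\hat z_j$ to $[1:0]$, use \cref{lem:fekete-node-is-max} to bound the affine polynomial by $e^{1/2}$ times its modulus at the node on the disc of radius $(d-1)^{-1/2}$, deduce a lower bound on a smaller concentric disc, and then handle the Fubini--Study weight and the chordal conversion. The only difference is that you apply the Schwarz lemma to $g-g(0)$, whereas the paper uses Cauchy coefficient estimates and a geometric series, which gives different but equally universal constants; in your last step, $|z|\le1$ holds automatically because $t\mapsto 2t/\sqrt{1+t^2}$ is increasing and your chordal radius is below $\sqrt2$.
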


\begin{proof}
\leavevmode\phantomsection
\label{proof:fekete-lagrange-local-stability}
Fix $j$ and set $n=d-1$. Since rotations preserve chordal
distances, we may rotate the entire configuration and the
evaluation point so that $\hat z_j$ corresponds to $[1:0]$.
We keep the same notation for the rotated configuration.

Let $p(z)\in\Sphere^2$ correspond to $[1:z]$, so that
$p(0)=\hat z_j$. For $k\ne j$, choose homogeneous
representatives $[u_k:v_k]$ of $\hat z_k$.
The nodes are distinct, so $v_k\ne0$. Define
\[
g(z):=\prod_{k\ne j}\left(1-\frac{u_k}{v_k}z\right).
\]
This polynomial has degree at most $n$ and satisfies $g(0)=1$.
By \cref{eq:projective-chordal-distance,lem:fekete-node-is-max},
\[
\frac{H_{\omega_d\setminus\{\hat z_j\}}(p(z))}{P_j}
=
\frac{|g(z)|}{(1+|z|^2)^{n/2}}
\le1,
\qquad z\in\C.
\]
To obtain bounds independent of $n$, put
$h(w):=g(w/\sqrt n)$. For $|w|\le1$,
\[
|h(w)|
\le\left(1+\frac{|w|^2}{n}\right)^{n/2}
\le e^{1/2}.
\]
Write $h(w)=1+\sum_{m=1}^n c_mw^m$.
By Cauchy's integral formula, for $1\le m\le n$,
\[
c_m=\frac{1}{2\pi i}\int_{|\zeta|=1}
\frac{h(\zeta)}{\zeta^{m+1}}\,\mathrm{d}\zeta.
\]
Since $|\zeta|=1$ and $|h(\zeta)|\le e^{1/2}$ on this circle,
\[
|c_m|
\le\frac1{2\pi}\int_{|\zeta|=1}|h(\zeta)|\,|\mathrm{d}\zeta|
\le e^{1/2}.
\]
Hence, for $|w|\le1/4$,
\[
|h(w)-1|
\le e^{1/2}\sum_{m=1}^{\infty}4^{-m}
=\frac{e^{1/2}}3<\frac23,
\]
and therefore $|h(w)|\ge1/3$.

Returning to the variable $z$, whenever
$|z|\le(4\sqrt n)^{-1}$ we have $|g(z)|\ge1/3$ and
\[
(1+|z|^2)^{n/2}
\le e^{n|z|^2/2}
\le e^{1/32}.
\]
Consequently,
\[
\frac{H_{\omega_d\setminus\{\hat z_j\}}(p(z))}{P_j}
\ge\frac13e^{-1/32}.
\]
Finally,
\[
\|p(z)-p(0)\|=\frac{2|z|}{\sqrt{1+|z|^2}}.
\]
Thus the disk $|z|\le(4\sqrt n)^{-1}$ corresponds to the
closed chordal cap centered at $\hat z_j$ with radius
\[
\frac2{\sqrt{16n+1}}
=\frac2{\sqrt{16d-15}}
\ge\frac1{2\sqrt d}.
\]
Undoing the rotation proves the lemma with
$a=\frac13e^{-1/32}$ and $b=\frac12$, for every $d\ge2$.
\end{proof}

\begin{lemma}\label{lem:fekete-pairwise-products}
There are universal $c,C>0$ such that, for every elliptic Fekete configuration with $d\ge2$ and every $i\ne j$,
\begin{equation}\label{eq:fekete-pairwise-products}
\frac{c}{\sqrt d\,\|\hat{z}_i-\hat{z}_j\|}
\le\frac{P_j}{P_i}
\le C\sqrt d\,\|\hat{z}_i-\hat{z}_j\|.
\end{equation}
\end{lemma}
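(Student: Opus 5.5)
The plan is to compare $P_i$ and $P_j$ through the two-point factorization, using \cref{lem:fekete-node-is-max} for the maximality of $P_i$ and \cref{lem:fekete-lagrange-local-stability} for the size of $H_{\omega_d\setminus\{\hat z_j\}}$ near $\hat z_j$. Fix $i\ne j$ and let $a,b$ be the constants of \cref{lem:fekete-lagrange-local-stability}, with $b=\tfrac12$. I will prove the upper bound in \eqref{eq:fekete-pairwise-products}. The lower bound then follows by exchanging the roles of $i$ and $j$: the upper bound gives $P_i/P_j\le C\sqrt d\,\|\hat z_i-\hat z_j\|$, which is the stated lower bound with $c=1/C$.

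For the upper bound, choose a point $p\in\Sphere^2$ with $\|p-\hat z_j\|=b/\sqrt d$; this is possible because $b/\sqrt d<2$. For every $p$ we have the identity
\[
H_{\omega_d\setminus\{\hat z_i\}}(p)
=H_{\omega_d\setminus\{\hat z_j\}}(p)\,\frac{\|p-\hat z_j\|}{\|p-\hat z_i\|},
\]
valid whenever $p\ne\hat z_i$. The choice $p\ne\hat z_i$ is guaranteed if $\|\hat z_i-\hat z_j\|>b/\sqrt d$, which separation gives below. The three ingredients are then:
\begin{itemize}
\item \cref{lem:fekete-node-is-max} gives $P_i\ge H_{\omega_d\setminus\{\hat z_i\}}(p)$.
\item \cref{lem:fekete-lagrange-local-stability} gives $H_{\omega_d\setminus\{\hat z_j\}}(p)\ge aP_j$.
\item The triangle inequality gives $\|p-\hat z_i\|\le\|\hat z_i-\hat z_j\|+b/\sqrt d$.
\end{itemize}
Combining them yields
\[
P_i\ge aP_j\,\frac{b/\sqrt d}{\|\hat z_i-\hat z_j\|+b/\sqrt d}.
\]
Rearranging, this is
\[
\frac{P_j}{P_i}\le\frac{\sqrt d}{ab}\Bigl(\|\hat z_i-\hat z_j\|+\frac{b}{\sqrt d}\Bigr).
\]

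The only remaining point, and the one place where more than bookkeeping is needed, is to absorb the additive term $b/\sqrt d$ into $\|\hat z_i-\hat z_j\|$. For this I use separation. A Fekete configuration is a global, hence local, minimizer of $\Energy$. By \cref{th:local-minimum-inverse-fourth} (or Dragnev's bound),
\[
\|\hat z_i-\hat z_j\|\ge\delta(\omega_d)\ge\frac{2}{(d^2-3d+3)^{1/4}}\ge\frac{2}{\sqrt d}>\frac{b}{\sqrt d}.
\]
This shows that $p\ne\hat z_i$. It also shows that $b/\sqrt d\le\tfrac b2\|\hat z_i-\hat z_j\|$. Hence $P_j/P_i\le C\sqrt d\,\|\hat z_i-\hat z_j\|$ with the universal constant $C=(1+b/2)/(ab)$.
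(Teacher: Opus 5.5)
Your proof is correct and is essentially the paper's argument: the same factorization of $H_{\omega_d\setminus\{\hat z_i\}}$ through $H_{\omega_d\setminus\{\hat z_j\}}$, combined with \cref{lem:fekete-node-is-max}, \cref{lem:fekete-lagrange-local-stability}, the triangle inequality, and an exchange of $i$ and $j$. The paper proves the lower bound directly and swaps, whereas you prove the upper bound and swap. The only substantive difference is the separation input. You import $\delta(\omega_d)\ge 2/\sqrt d$ from \cref{th:local-minimum-inverse-fourth}. The paper instead reads off $\delta(\omega_d)>b/\sqrt d$ from \cref{lem:fekete-lagrange-local-stability} itself, since $H_{\omega_d\setminus\{\hat z_j\}}(\hat z_i)=0$, and then works at the smaller radius $\eta/\sqrt d$ with $\eta<b/2$. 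This keeps the paper's lemma independent of the second-variation argument.
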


\begin{proof}
\leavevmode\phantomsection
\label{proof:fekete-pairwise-products}
Let $a,b>0$ be the constants in
\cref{lem:fekete-lagrange-local-stability}.
Since
$H_{\omega_d\setminus\{\hat z_j\}}(\hat z_i)=0$
for $i\ne j$, that lemma implies
$\delta(\omega_d)>b/\sqrt d$.
Fix $0<\eta<\min\{b/2,1\}$ and choose $p\in\Sphere^2$
with $\|p-\hat z_i\|=\eta/\sqrt d$.
Then $p\ne\hat z_j$ and
$\eta/\sqrt d<\|\hat z_i-\hat z_j\|/2$.
Hence
\begin{align*}
P_j
&\ge H_{\omega_d\setminus\{\hat{z}_j\}}(p)\\
&=H_{\omega_d\setminus\{\hat{z}_i\}}(p)
\frac{\|p-\hat{z}_i\|}{\|p-\hat{z}_j\|}\\
&\ge aP_i\frac{\eta/\sqrt d}{\|\hat{z}_i-\hat{z}_j\|+\eta/\sqrt d}
\ge\frac{2a\eta}{3}\frac{P_i}{\sqrt d\,\|\hat{z}_i-\hat{z}_j\|}.
\end{align*}
The first inequality is \cref{lem:fekete-node-is-max}. Interchanging $i$ and $j$ gives the other side of \cref{eq:fekete-pairwise-products}.
\end{proof}

\subsection{Separation and Riesz estimates}\label{sec:proofs:geometry}

\begin{proof}[Proof of \cref{th:local-minimum-inverse-fourth}]
\leavevmode\phantomsection
\label{proof:local-minimum-inverse-fourth}
Set $n=d-1$. For each node $\hat z_i$, choose a unit
representative $v_i\in\C^2$ of its projective point and a vector
$w_i$ such that $(v_i,w_i)$ is an orthonormal basis with
$\det(v_i,w_i)=1$. 
We describe nearby configurations using the independent local
coordinates $[v_i+\zeta_iw_i]$, where
$\zeta=(\zeta_1,\ldots,\zeta_d)\in\C^d$.
Put
\[
G(\zeta):=\prod_{i<j}
\det(v_i+\zeta_iw_i,v_j+\zeta_jw_j).
\]
Since $\|v_i+\zeta_iw_i\|^2=1+|\zeta_i|^2$ and each node
occurs in $n$ unordered pairs,
\cref{eq:projective-chordal-distance} gives the product
of pairwise chordal distances in the perturbed configuration as
\[
2^{\binom d2}
\frac{|G(\zeta)|}
{\displaystyle\prod_{i=1}^d(1+|\zeta_i|^2)^{n/2}}.
\]
The logarithmic energy is minus twice the logarithm of this
product. Therefore, local minimality implies that
\[
\Phi(\zeta):=\log|G(\zeta)|
-\frac n2\sum_{i=1}^d\log(1+|\zeta_i|^2)
\]
has a local maximum at $\zeta=0$.

The nodes are distinct, hence $G(0)\ne0$, and we may choose a holomorphic logarithm $L=\log G$ near $0$.
Let $A=(A_{ij})_{i,j=1}^d$ be the complex symmetric matrix
\[
A_{ij}:=\partial_{\zeta_i}\partial_{\zeta_j}L(0).
\]
For each fixed $\zeta\in\C^d$, the real-valued function
$t\mapsto\Phi(t\zeta)$ has a local maximum at $t=0$.
Its first derivative therefore vanishes, and Taylor's
formula gives
\[
\Phi(t\zeta)-\Phi(0)
=\frac{t^2}{2}
\left(\operatorname{Re}(\zeta^TA\zeta)-n\|\zeta\|^2\right)
+o(t^2),
\qquad t\to0,\quad t\in\R.
\]
Here $T$ denotes ordinary transpose, while $\|\cdot\|$ denotes the standard Hermitian norm on $\mathbb C^d$.
Since the left-hand side is nonpositive for sufficiently
small $t$, dividing by $t^2$ and letting $t\to0$ yields
\[
\operatorname{Re}(\zeta^TA\zeta)-n\|\zeta\|^2\le0.
\]
Applying this inequality to $e^{i\theta}\zeta$ and choosing
$\theta$ so that $e^{2i\theta}\zeta^TA\zeta$ is real and
nonnegative gives
\[
|\zeta^TA\zeta|\le n\|\zeta\|^2.
\]
For unit vectors $u,v\in\C^d$, symmetry of $A$ gives
\[
4u^TAv=(u+v)^TA(u+v)-(u-v)^TA(u-v).
\]
Consequently,
\[
|u^TAv|\le\frac n4
\bigl(\|u+v\|^2+\|u-v\|^2\bigr)=n.
\]
Let $e_i$ be the $i$-th standard basis vector. If $Ae_i\ne0$,
choose
\[
v=e_i,
\qquad
u=\frac{\overline{Ae_i}}{\|Ae_i\|}.
\]
Then $u$ and $v$ are unit vectors and
\[
u^T A e_i
=
\frac{(\overline{Ae_i})^T Ae_i}{\|Ae_i\|}
=
\|Ae_i\|.
\]
Hence $\|Ae_i\|\le n$.
The same bound is immediate when $Ae_i=0$. 
Hence every column satisfies
\[
\sum_{j=1}^d|A_{ji}|^2=\|Ae_i\|^2\le n^2.
\]
It remains to compute the off-diagonal entries. Fix $i<j$.
Only the factor
\[
D(\zeta_i,\zeta_j):=
\det(v_i+\zeta_iw_i,v_j+\zeta_jw_j)
\]
contributes to $A_{ij}$. In the basis $(v_i,w_i)$, write
\[
v_j=\alpha v_i+\beta w_i,
\qquad
w_j=-\overline\beta\,v_i+\overline\alpha\,w_i,
\qquad |\alpha|^2+|\beta|^2=1.
\]
The expression for $w_j$ follows from orthonormality and
$\det(v_j,w_j)=1$. Expanding the determinant gives
\[
D(\zeta_i,\zeta_j)
=\beta-\alpha\zeta_i+\overline\alpha\,\zeta_j
+\overline\beta\,\zeta_i\zeta_j,
\qquad \beta=\det(v_i,v_j)\ne0.
\]
Differentiating its logarithm at the origin, we obtain
\[
A_{ij}
=\left.
\frac{D\,\partial_{\zeta_i}\partial_{\zeta_j}D
-(\partial_{\zeta_i}D)(\partial_{\zeta_j}D)}{D^2}
\right|_{(0,0)}
=\frac{|\beta|^2+|\alpha|^2}{\beta^2}
=\frac1{\det(v_i,v_j)^2}.
\]
By symmetry this holds for every $i\ne j$.
Since the representatives are unit vectors,
\cref{eq:projective-chordal-distance} now yields
\[
|A_{ij}|=\frac4{\|\hat z_i-\hat z_j\|^2},
\qquad i\ne j.
\]
The diagonal term $|A_{ii}|^2$ is nonnegative, so omitting it
from the squared norm of the $i$-th column gives
\[
16U_{4,i}(\omega_d)
=\sum_{j\ne i}|A_{ji}|^2
\le\sum_{j=1}^d|A_{ji}|^2
\le n^2=(d-1)^2.
\]
This proves \cref{eq:local-minimum-inverse-fourth}.

Finally, choose a pair $i,j$ attaining $\delta(\omega_d)$.
The other $d-2$ distances from $\hat z_i$ are at most $2$,
so each contributes at least $1/16$ to $U_{4,i}(\omega_d)$.
Therefore,
\[
\frac1{\delta(\omega_d)^4}+\frac{d-2}{16}
\le U_{4,i}(\omega_d)\le\frac{(d-1)^2}{16}.
\]
Rearranging gives
\[
\delta(\omega_d)\ge\frac2{(d^2-3d+3)^{1/4}},
\]
which is \cref{eq:local-minimum-separation}.
\end{proof}

\begin{lemma}\label{lem:line-bundle-fekete-local-count}
There exist universal constants $A,C>0$ with the following
property. Let $d\ge2$, set $k=d-1$, and let
$\omega_d\subset\Sphere^2$ be an elliptic Fekete configuration.
For every closed geodesic ball $B_{\rm geo}(x,r)$ in the unit
sphere $\Sphere^2$, with $x\in\Sphere^2$, $0<r\le\pi$, and
$r\sqrt k\ge A$,
\[
\left|
\frac{\#(\omega_d\cap B_{\rm geo}(x,r))}{d}
-\sigma(B_{\rm geo}(x,r))
\right|
\le
C\,\frac{\sigma(B_{\rm geo}(x,r))}{r\sqrt k}.
\]
\end{lemma}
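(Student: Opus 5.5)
The plan is to deduce this lemma directly from the local counting estimate of Lev and Ortega-Cerd\`a \cite[Theorem~1]{LevOrtegaCerda2016}. That theorem concerns Fekete configurations for a positive line bundle $(L,\phi)$ on a compact complex manifold, here of dimension one. It states that there are constants $A,C$, depending only on the manifold and the metric, such that for Fekete points of $(L^k,k\phi)$ and every ball $B(x,r)$ with $r\sqrt k\ge A$, the normalized counting measure and the normalized equilibrium (curvature) measure of the ball differ by at most $C$ times the measure of a boundary layer of width about $1/\sqrt k$, relative to the measure of the ball.

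First step: by \cref{lem:elliptic-fekete-as-line-bundle-fekete}, $\omega_d$ is a Fekete configuration for $(\mathcal O(k),k\phi_{\rm FS})$ with $k=d-1$, and the number of points is $d=\dim H^0(\mathbb{PC}^1,\mathcal O(k))$. This puts us in their setting.

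Second step: identify the objects in their theorem with ours.
\begin{itemize}
\item The curvature form $i\partial\bar\partial\phi_{\rm FS}=\frac{i\,dz\wedge d\bar z}{2(1+|z|^2)^2}$ has total mass $\pi$ on $\mathbb{PC}^1$. After normalization it becomes $\mathrm dA/(\pi(1+|z|^2)^2)$, which is exactly $\sigma$ under stereographic projection, as used in \cref{lem:spherical-nikolskii}.
\item The Kähler metric induced by this form is a fixed constant multiple of the round metric on $\Sphere^2$. So their metric balls are geodesic caps $B_{\rm geo}(x,cr)$ for a fixed $c$, and rescaling $r$ only changes $A$ and $C$ by universal factors.
\item Their normalization of the counting measure is $1/\dim H^0=1/d$, which matches ours.
\end{itemize}

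Third step: convert their error term to the form stated here. If the error is written as $C\,\sigma(B_{\rm geo}(x,r+1/\sqrt k)\setminus B_{\rm geo}(x,r-1/\sqrt k))$, compute directly on the sphere. We have $\sigma(B_{\rm geo}(x,r))=(1-\cos r)/2$, so the shell has measure at most about $\sin(r)/\sqrt k$, up to constants. For $r\le\pi/2$ we have $\sin r\lesssim r$ and $\sigma(B)\asymp r^2$, hence $\sin r/\sqrt k\lesssim \sigma(B)/(r\sqrt k)$. For $r\ge\pi/2$ we have $\sigma(B)\ge1/2$ and $r\asymp1$, so the shell measure $\lesssim 1/\sqrt k\lesssim\sigma(B)/(r\sqrt k)$. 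Caps with $r$ near $\pi$ could instead be handled through the complementary cap, using that the total count is exactly $d$.

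Uniformity in $x$ requires no separate argument: it is part of their theorem, and it is also clear here from rotation invariance, since $\mathrm{SU}(2)$ acts transitively and preserves both the Fekete property and $\sigma$.

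The main obstacle I expect is bookkeeping rather than substance. We must check that the hypotheses of \cite[Theorem~1]{LevOrtegaCerda2016} (smooth positive metric, Fekete points for $L^k$, radius range $r\ge A/\sqrt k$) apply verbatim with $n=1$. We must also check that their error term, which may be stated as $C r^{2n-1}/\sqrt k$ or as a boundary-layer measure, translates into $C\sigma(B)/(r\sqrt k)$ uniformly for all $r$ up to $\pi$, not only small $r$. If their statement only covers $r$ below some fixed $r_0$, I would treat $r\ge r_0$ by covering: write the cap or its complement as a union of boundedly many small caps, or note that for $r\ge r_0$ the desired bound is just $O(1/\sqrt k)$. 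That bound follows from summing the small-cap estimate over a bounded cover, after first using the estimate to control the count in thin shells.
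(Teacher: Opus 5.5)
Your proposal is correct and follows essentially the same route as the paper: identify $\omega_d$ with line-bundle Fekete points via \cref{lem:elliptic-fekete-as-line-bundle-fekete}, identify the normalized curvature measure with $\sigma$, and read off the multiplicative estimate $(1+O((r\sqrt k)^{-1}))\,\sigma(B)$ from Lev--Ortega-Cerd\`a. That estimate already has the stated form, so your boundary-layer conversion and covering fallback are not needed.

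The remaining differences are minor. The paper takes the round metric directly as the (arbitrary) auxiliary metric, rather than rescaling radii from the curvature metric. It also includes two bookkeeping steps you omit: it enlarges $A$ to absorb any threshold $k\ge k_0$ (since $r\le\pi$ and $r\sqrt k\ge A$ force $k\ge A^2/\pi^2$), and it passes from the open balls of the cited theorem to closed caps by applying the estimate on $B^{\circ}_{\rm geo}(x,r_n)$ with $r_n\downarrow r$ and using continuity from above of $\sigma$ and of the counting measure.
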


\begin{proof}
\leavevmode\phantomsection
\label{proof:line-bundle-fekete-local-count}
Set $k=d-1$, and let $\mathcal F_k\subset\mathbb{CP}^1$
be the configuration corresponding to $\omega_d$ under
$\mathbb{CP}^1\simeq\Sphere^2$.
By \cref{lem:elliptic-fekete-as-line-bundle-fekete},
$\mathcal F_k$ is a Fekete configuration for
$(\mathcal O(k),k\phi_{\mathrm{FS}})$.
The auxiliary Hermitian metric in
\cite[Theorem~1]{LevOrtegaCerda2016} may be chosen arbitrarily.
We choose the pullback of the round metric on $\Sphere^2$,
so that its open metric balls are precisely
$B_{\rm geo}^{\circ}(x,r)$.
Moreover, the normalized curvature measure
\[
\nu:=
\frac{i\partial\bar\partial\phi_{\mathrm{FS}}}
{\displaystyle\int_{\mathbb{CP}^1}
i\partial\bar\partial\phi_{\mathrm{FS}}}
\]
corresponds to $\sigma$: it is an $\mathrm{SU}(2)$-invariant
probability measure, and hence is the normalized spherical
area measure under the above identification.
The normalized counting estimate in
\cite[Theorem~1]{LevOrtegaCerda2016} therefore gives
\[
\frac{\#(\mathcal F_k\cap B_{\rm geo}^{\circ}(x,r))}
{\#\mathcal F_k}
=
\left(1+O\bigl((r\sqrt{k})^{-1}\bigr)\right)
\nu(B_{\rm geo}^{\circ}(x,r)).
\]
Consequently, there exist universal constants $A,C>0$ such that
\[
\left|
\frac{\#(\omega_d\cap B_{\rm geo}^{\circ}(x,r))}{d}
-\sigma(B_{\rm geo}^{\circ}(x,r))
\right|
\le
C\,\frac{\sigma(B_{\rm geo}^{\circ}(x,r))}{r\sqrt{k}}
\]
whenever $0<r<\pi$ and $r\sqrt{k}\ge A$.
The constants are independent of the center, the radius,
the degree, and the chosen Fekete configuration.
Any fixed lower bound on $k$ required by the asymptotic
estimate can be enforced by increasing $A$, since
$r\le\pi$ and $r\sqrt{k}\ge A$ imply $k\ge A^2/\pi^2$.

Let $0<r<\pi$ satisfy $r\sqrt{k}\ge A$. For $n\ge1$, set
\[
r_n:=r+\frac{\pi-r}{n+1},
\qquad
B_n:=B_{\rm geo}^{\circ}(x,r_n).
\]
Then $r<r_n<\pi$ for every $n\ge1$, and
$\lim_{n\to\infty}r_n=r$. Moreover,
\[
B_{n+1}\subset B_n,
\qquad
\bigcap_{n=1}^{\infty}B_n=B_{\rm geo}(x,r).
\]
Indeed, a point belongs to every $B_n$ if and only if its
geodesic distance from $x$ is at most $r$.
By continuity from above for the finite measures $\sigma$
and $\sum_{p\in\omega_d}\delta_p$, we obtain
\[
\lim_{n\to\infty}\sigma(B_n)
=\sigma(B_{\rm geo}(x,r)),
\qquad
\lim_{n\to\infty}\#(\omega_d\cap B_n)
=\#(\omega_d\cap B_{\rm geo}(x,r)).
\]
Since $r_n\sqrt{k}\ge r\sqrt{k}\ge A$, the open-ball estimate
applies to every $B_n$, with the same constant $C$:
\[
\left|
\frac{\#(\omega_d\cap B_n)}{d}-\sigma(B_n)
\right|
\le C\,\frac{\sigma(B_n)}{r_n\sqrt{k}}.
\]
Taking $n\to\infty$ in this inequality, and using $r>0$, gives
\[
\left|
\frac{\#(\omega_d\cap B_{\rm geo}(x,r))}{d}
-\sigma(B_{\rm geo}(x,r))
\right|
\le C\,\frac{\sigma(B_{\rm geo}(x,r))}{r\sqrt{k}}.
\]
For $r=\pi$, we have $B_{\rm geo}(x,\pi)=\Sphere^2$,
so both normalized quantities on the left are equal to $1$.
\end{proof}

\begin{proof}[Proof of \cref{thm:fekete-cap-discrepancy}]
\leavevmode\phantomsection
\label{proof:fekete-cap-discrepancy}
By \cref{lem:elliptic-fekete-as-line-bundle-fekete}, we can apply \cref{lem:line-bundle-fekete-local-count} with $k=d-1$. Since $\sigma(B_{\rm geo}(q,r))=(1-\cos r)/2\le r^2/4$, for $r\sqrt k\ge A$ its conclusion gives
\[
\left|\#(\omega_d\cap B_{\rm geo}(q,r))-d\,\sigma(B_{\rm geo}(q,r))\right|
\le C\frac{d r}{\sqrt k}\le C' r\sqrt d.
\]
Suppose next that $r\sqrt k<A$ and $k$ is sufficiently large that $r_0:=A/\sqrt k<\pi$. Applying the same lemma at $r_0$ shows that the larger cap contains at most a constant depending only on $A$. Also $d\,\sigma(B_{\rm geo}(q,r))\le dA^2/(4k)=O(1)$. 
Since $r<r_0$, we have
$B_{\rm geo}(q,r)\subset B_{\rm geo}(q,r_0)$, and hence
\begin{align*}
\left|
\#(\omega_d\cap B_{\rm geo}(q,r))
-d\,\sigma(B_{\rm geo}(q,r))
\right|
&\le
\#(\omega_d\cap B_{\rm geo}(q,r))
+d\,\sigma(B_{\rm geo}(q,r))\\
&\le
\#(\omega_d\cap B_{\rm geo}(q,r_0))
+\frac{dA^2}{4k}
=O(1).
\end{align*}
The finitely many remaining degrees are covered by increasing the universal constant. This proves \cref{eq:fekete-local-count} for all radii, including $r=0$.

Finally, $B_{\rm ch}(q,t)=B_{\rm geo}(q,2\arcsin(t/2))$ and the geodesic radius is at most $\pi$. Dividing \cref{eq:fekete-local-count} by $d$ gives \cref{eq:fekete-cap-discrepancy}.
\end{proof}

\begin{proof}[Proof of \cref{th:covering-radius-fekete}]
\leavevmode\phantomsection
\label{proof:covering-radius-fekete}
The $d$ closed chordal caps of radius $\rho(\omega_d)$ centered at the nodes cover $\Sphere^2$. Their normalized areas are $\rho(\omega_d)^2/4$, so
\[
1\le\frac{d\,\rho(\omega_d)^2}{4}.
\]
This proves the lower bound for every configuration.

For the upper bound, take a fixed $A>0$ and $r=A/\sqrt d\le1$. Since $\sigma(B_{\rm geo}(q,r))\ge c r^2$ for $0\le r\le1$, \cref{eq:fekete-local-count} yields
\[
\#(\omega_d\cap B_{\rm geo}(q,r))\ge cA^2-C(1+A).
\]
Choose $A$ universally large enough to make the right-hand side positive. Every such cap contains a node, uniformly in its center. The chordal distance does not exceed the geodesic distance, so $\rho(\omega_d)\le A/\sqrt d$ for these degrees. The finitely many remaining degrees are absorbed into the constant using $\rho(\omega_d)\le2$.
\end{proof}

\begin{proof}[Proof of \cref{cor:fekete-discrepancy-distance-energy}]
\leavevmode\phantomsection
\label{proof:fekete-discrepancy-distance-energy}
By the definitions of the discrepancies and
$\sigma(\Sphere^2)=1$,
\[
D_2(\omega_d)^2
\le D_\infty(\omega_d)^2
\int_{\Sphere^2}\int_0^2 t\,\mathrm{d}t\,\mathrm{d}\sigma(q)
=2D_\infty(\omega_d)^2
\lesssim d^{-1}.
\]
With this normalization, Stolarsky's invariance principle
\cite{Stolarsky1973} reads 
\[
4D_2(\omega_d)^2
=\int_{\Sphere^2}\int_{\Sphere^2}\|p-q\|\,\mathrm{d}\sigma(p)\,\mathrm{d}\sigma(q)
-\frac{E_{-1}(\omega_d)}{d^2}.
\]
Rotational invariance gives the double integral as
\[
\frac12\int_{-1}^1\sqrt{2-2t}\,\mathrm{d}t=\frac43.
\]
Consequently $0\le\frac43d^2-E_{-1}(\omega_d)=4d^2D_2(\omega_d)^2\lesssim d$.
\end{proof}

\begin{proof}[Proof of \cref{cor:fekete-separation}]
\leavevmode\phantomsection
\label{proof:fekete-separation}
By \cref{th:covering-radius-fekete}, fix a universal $\beta\ge2$
such that $\rho(\omega_d)\le\beta/\sqrt d$. Suppose first that
$d>\beta^2$, set $R=\beta/\sqrt d<1$, and choose a pair
$\hat z_i,\hat z_j$ attaining $\delta(\omega_d)$.
The cap $B_{\rm ch}(\hat z_i,2R)$ has normalized area $R^2$,
whereas the union of the radius-$R$ caps about $\hat z_i$ and
$\hat z_j$ has area at most $R^2/2$. There is therefore a point
$p$ in the former cap and outside the latter two. 
By covering, $p$ lies within distance $R$ of a node $\hat z_k$ with $k\ne i,j$, and $\|\hat z_i-\hat z_k\|\le3R$.
This, together with \cref{eq:local-minimum-inverse-fourth}, give us
\[
\frac{1}{\delta(\omega_d)^4}+\frac{d^2}{81\beta^4}
\le U_{4,i}(\omega_d)
\le\frac{(d-1)^2}{16}\le\frac{d^2}{16}.
\]
Hence, for $d>\beta^2$,
\[
\sqrt d\,\delta(\omega_d)\ge
\alpha_\beta:=2\left(1-\frac{16}{81\beta^4}\right)^{-1/4}>2.
\]
For the finitely many remaining degrees, \cref{eq:local-minimum-separation}
gives
\[
\sqrt d\,\delta(\omega_d)
\ge2\left(\frac{d^2}{d^2-3d+3}\right)^{1/4}>2.
\]
Taking the minimum of these finitely many bounds and
$\alpha_\beta$ proves the claim with a universal $\alpha>2$.
\end{proof}


\subsubsection{Transfer from logarithmic to Riesz energy}
\label{sec:proofs:energy-transfer}
\leavevmode
Brauchart \cite[Section~3.2]{Brauchart2026} relates logarithmic
and critical Riesz energies by partial summation. Here we combine
the logarithmic-energy bound with explicit control of the short-range
tail to obtain upper bounds as well.

\begin{lemma}\label{lem:log-to-riesz-transfer}
Let $A\in\R$, $B>0$, and let $\omega_d=\{\hat{z}_1,\ldots,\hat{z}_d\}\subset\Sphere^2$
consist of $d\ge2$ distinct points. Write
\[
x_{ij}:=\frac{1+\langle\hat z_i,\hat z_j\rangle}{2}
=1-\frac{\|\hat z_i-\hat z_j\|^2}{4}.
\]
Suppose that
\begin{equation}\label{eq:transfer-log-hypothesis}
\Energy(\omega_d)\le\kappa d^2-\frac12d\log d+Ad
\end{equation}
and
\begin{equation}\label{eq:transfer-tail-hypothesis}
T_d:=
\sum_{i\ne j}
\frac{x_{ij}^{d+1}}{\|\hat z_i-\hat z_j\|^2}
\le Bd^2.
\end{equation}
Then
\begin{align*}
E_s(\omega_d) & =\frac{2^{1-s}}{2-s}d^2
+O_{A,B,s}(d^{1+s/2}),\qquad 0<s<2,\\
E_2(\omega_d) & =\frac14d^2\log d+O_{A,B}(d^2).
\end{align*}
More precisely, if these hypotheses hold with the same $A,B$
for all sufficiently large degrees in a family
$\{\omega_d\}_{d\in\mathcal D}$, then
\begin{align}
\liminf_{d\to\infty}
R_s(\omega_d)
&\ge c_s^-,\notag\\
\limsup_{d\to\infty}
R_s(\omega_d)
&\le c_s^+(A,B),\qquad 0<s<2,
\notag\\
\liminf_{d\to\infty}
R_2(\omega_d)
&\ge\frac{\gamma-1}{4},\label{eq:transfer-critical-lower}\\
\limsup_{d\to\infty}
R_2(\omega_d)
&\le\frac{A+\gamma-\log2}{2}+B.
\label{eq:transfer-critical-upper}
\end{align}
The constants $c_s^-,c_s^+(A,B)$ are given in
\cref{eq:transfer-second-order-constants}.
\end{lemma}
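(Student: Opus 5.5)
The plan is to expand every kernel in powers of $x=x_{ij}$ and use the fact that $x^n$ is a positive definite zonal kernel for each $n$. Since $\|\hat z_i-\hat z_j\|^2=4(1-x_{ij})$, with $a=s/2$ we have
\[
\|\hat z_i-\hat z_j\|^{-s}=2^{-s}(1-x_{ij})^{-a}=2^{-s}\sum_{n\ge0}\frac{(a)_n}{n!}x_{ij}^n,
\qquad
-\log\|\hat z_i-\hat z_j\|=-\log2+\frac12\sum_{n\ge1}\frac{x_{ij}^n}{n}.
\]
Here $x=(1+\langle p,q\rangle)/2$ is a nonnegative combination of powers of $\langle p,q\rangle$. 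Hence each $x^n$ has a Gegenbauer (Funk--Hecke) expansion with nonnegative coefficients. Its constant term is $\int x^n\,\mathrm d\sigma=1/(n+1)$, because under $\sigma$ the variable $x$ is uniform on $[0,1]$. Including the diagonal, this gives $\sum_{i,j}x_{ij}^n\ge d^2/(n+1)$. I therefore define the nonnegative excesses
\[
e_n:=\sum_{i\ne j}x_{ij}^n-\frac{d^2}{n+1}+d\ \ge0 .
\]
All the estimates will be phrased in terms of the $e_n$.

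First I rewrite $\Energy$ as a series in the $e_n$. The terms $-d/n$ diverge individually, so I truncate at $N=\lambda d$ with $\lambda\asymp1$, to be optimized later. For $n\le N$ I use the exact expansion, which produces $\kappa d^2$, a harmonic sum $-\tfrac d2 H_N$, a constant times $d$, and $\tfrac12\sum_{n\le N}e_n/n$. For $n>N$ I use
\[
\sum_{n>N}\frac{x^n}{n}\le\frac1{N}\cdot\frac{x^{N+1}}{1-x}=\frac{4x^{N+1}}{N\|\hat z_i-\hat z_j\|^2},
\]
which is nonnegative and, for $N=d$, is at most $4T_d/d\le4Bd$ by \cref{eq:transfer-tail-hypothesis}; for general $\lambda$ one uses $x^{N+1}\le x^{d+1}$ when $N\ge d$, or redoes the bookkeeping. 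Since $H_N=\log d+\log\lambda+\gamma+o(1)$, the $-\tfrac12 d\log d$ term in \cref{eq:transfer-log-hypothesis} cancels. This leaves
\[
\sum_{n\le N}\frac{e_n}{n}\le\bigl(\eta(A)+\text{explicit }\lambda\text{-terms}\bigr)d+O(B)d+o(d),
\]
which is where $\eta(A)=2A+\gamma+1-2\log 2$ should arise. Nonnegativity of the discarded tail is what keeps this an honest upper bound on the weighted excess.

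Next I substitute into $E_s$. I write $c_n=2^{-s}(a)_n/n!\sim2^{-s}n^{a-1}/\Gamma(a)$ and split the sum at $N$. The terms $d^2c_n/(n+1)$ summed over all $n$ give exactly $\tfrac{2^{1-s}}{2-s}d^2$, with the part $n>N$ recovered from $\sum_{i\ne j}x^n$ only in an averaged sense; I handle this by keeping the tail exactly and comparing. The term $-d\sum_{n\le N}c_n\approx-2^{-s}dN^a/(\Gamma(a)a)$ is the negative second-order contribution of size $d^{1+a}$, and it yields $c_s^-$ after the continuous tail correction. For the lower bound I drop $\sum c_ne_n\ge0$ and the nonnegative discrete tail; this holds for any configuration. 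For the upper bound I use $c_ne_n\le(\max_{m\le N}mc_m)\,e_n/n\lesssim N^a e_n/n$ together with the previous step, and I bound the tail by
\[
\sum_{n>N}c_nx^n\lesssim N^{a-1}\frac{x^{N+1}}{1-x},
\]
so that, summed over pairs, it is $\lesssim d^{a-1}T_d\le Bd^{1+a}$. This gives $c_s^+(A,B)$ after choosing $\lambda$; the form $c_s^-+\tfrac{2^{-s}}{\Gamma(a)}(\eta(A)+4B)$ suggests $N=d$ with the weight bound $mc_m\le 2^{-s}N^a/\Gamma(a)$. At $s=2$ we have $c_n=\tfrac14$, and the same scheme yields $\tfrac14 d^2H_N$, hence the $\tfrac14d^2\log d$ term and the constants in \cref{eq:transfer-critical-lower,eq:transfer-critical-upper}.

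The main obstacle is the precise bookkeeping in the second-order constants. The truncation point, the asymptotics $(a)_n/n!\sim n^{a-1}/\Gamma(a)$ with explicit error, and the weight bound $\sup_{n\le N} nc_n/N^a$ must combine to give exactly $c_s^\pm$ and not merely the order $d^{1+s/2}$. I would first establish the $O(d^{1+s/2})$ statements, which are robust, and then obtain the $\limsup$ and $\liminf$ constants by letting $d\to\infty$ at fixed $\lambda$ using Riemann-sum limits of the truncated series.
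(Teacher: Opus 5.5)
Your proposal is correct and follows the paper's proof essentially step for step. Your $e_n$ are exactly the paper's moment discrepancies $B_n$; you obtain $e_n\ge0$ from nonnegative Legendre coefficients, whereas the paper uses the embedding $K_k(p,q)=\langle v(p)^{\otimes k},v(q)^{\otimes k}\rangle$. With $N=d$, your ingredients reproduce exactly $c_s^\pm$ and the critical constants: the weight bound $\max_{m\le d}mc_m=dc_d$, the tail bound $4c_{d+1}T_d$, and the two negative terms $-d\sum_{n\le d}c_n$ and $-d^2\sum_{n>d}c_n/(n+1)$.

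One correction: the logarithmic step needs only $L_d\ge0$, and the paper stresses that it uses neither the tail hypothesis nor any upper bound on $L_d$. The $O(B)d$ term in your bound for $\sum_{n\le N}e_n/n$ should therefore be dropped. If you kept it, you would not obtain the exact constants $c_s^+(A,B)$ and $\frac{A+\gamma-\log2}{2}+B$.
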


\begin{proof}
\leavevmode\phantomsection
\label{proof:log-to-riesz-transfer}
We expand the logarithmic and Riesz kernels in powers of $x_{ij}$.
The positivity of the resulting moment discrepancies allows us to
control the truncated Riesz sums using
\cref{eq:transfer-log-hypothesis}, while
\cref{eq:transfer-tail-hypothesis} controls the Riesz tails.

For each integer $k\ge0$, set
\begin{equation}\label{eq:positive-kernel-moments}
K_k(p,q):=\left(\frac{1+\langle p,q\rangle}{2}\right)^k,
\qquad
B_k:=\sum_{i,j=1}^d x_{ij}^k-\frac{d^2}{k+1}.
\end{equation}
We use the convention $x^0=1$, including at $x=0$, so that $B_0=0$.
For each fixed $p\in\Sphere^2$, rotational invariance gives
\[
\int_{\Sphere^2}K_k(p,q)\,\mathrm{d}\sigma(q)
=\frac12\int_{-1}^1
\left(\frac{1+t}{2}\right)^k\mathrm{d}t
=\frac1{k+1}.
\]
Hence, for the signed measure
$\nu_d:=\sum_{i=1}^d\delta_{\hat z_i}-d\sigma$,
\[
\iint K_k(p,q)\,\mathrm{d}\nu_d(p)\,\mathrm{d}\nu_d(q)
=\sum_{i,j=1}^d x_{ij}^k-\frac{2d^2}{k+1}
+\frac{d^2}{k+1}
=B_k.
\]
Writing $v(p):=(1,p)/\sqrt2\in\R^4$, we also have
\[
K_k(p,q)
=\langle v(p)^{\otimes k},v(q)^{\otimes k}\rangle.
\]
It follows that
\[
B_k
=\left\|\int v(p)^{\otimes k}\,\mathrm{d}\nu_d(p)\right\|^2
\ge0.
\]
Since $x_{ii}=1$, removing the diagonal terms yields,
for every $k\ge0$,
\[
\sum_{i\ne j}x_{ij}^k
=B_k+\frac{d^2}{k+1}-d.
\]
For an integer $m\ge1$, let
$\mathfrak h_m:=\sum_{k=1}^m1/k$
and define the logarithmic and inverse-square tails
\begin{equation}\label{eq:transfer-tails}
L_m:=\sum_{i\ne j}\sum_{k>m}\frac{x_{ij}^k}{k},
\qquad
T_m:=\frac14\sum_{i\ne j}\frac{x_{ij}^{m+1}}{1-x_{ij}}.
\end{equation}
Both tails are nonnegative, and $T_d$ agrees with the quantity in
\cref{eq:transfer-tail-hypothesis} because
$\|\hat z_i-\hat z_j\|^2=4(1-x_{ij})$.
Using $x_{ij}<1$ for $i\ne j$ and the series for $-\log(1-x)$,
we obtain
\begin{align*}
2\Energy(\omega_d)+2d(d-1)\log2
&=\sum_{i\ne j}-\log(1-x_{ij})\\
&=\sum_{k=1}^m\frac1k
\left(B_k+\frac{d^2}{k+1}-d\right)+L_m.
\end{align*}
Since
\[
\sum_{k=1}^m\frac1{k(k+1)}
=\sum_{k=1}^m\left(\frac1k-\frac1{k+1}\right)
=1-\frac1{m+1},
\]
rearranging gives the exact identity
\begin{equation}\label{eq:logarithmic-truncation-identity}
\sum_{k=1}^m\frac{B_k}{k}+L_m
=2\Energy(\omega_d)+2d(d-1)\log2
+d\mathfrak h_m-d^2+\frac{d^2}{m+1}.
\end{equation}
Using $2\kappa=1-2\log2$ and
\cref{eq:transfer-log-hypothesis}, the right-hand side is
at most $d\eta_{d,m}(A)$, where
\[
\eta_{d,m}(A):=
2A+\mathfrak h_m-\log d-2\log2+\frac{d}{m+1}.
\]
Taking $m=d$ and discarding $L_d\ge0$ therefore gives
\begin{equation}\label{eq:positive-kernel-moment-bound}
0\le\sum_{k=1}^d\frac{B_k}{k}
\le d\eta_{d,d}(A)
=\bigl(\eta(A)+o(1)\bigr)d.
\end{equation}
Here we used $\mathfrak h_d=\log d+\gamma+O(d^{-1})$ and
$\eta(A)=2A+\gamma+1-2\log2$.
This is the moment bound used below to transfer the logarithmic
estimate to Riesz energies. It requires neither $A>0$ nor the
tail hypothesis.

We first treat the critical case $s=2$.
The finite geometric series gives
\[
\frac1{4(1-x)}
=\frac14\sum_{k=0}^m x^k
+\frac{x^{m+1}}{4(1-x)},
\qquad 0\le x<1.
\]
Summing over $i\ne j$ and removing the diagonal terms as above
yields
\begin{equation}\label{eq:critical-energy-truncation}
E_2(\omega_d)
=\frac{d^2}{4}\mathfrak h_{m+1}
-\frac{d(m+1)}4
+\frac14\sum_{k=1}^mB_k+T_m.
\end{equation}
For $m=d$, positivity and
\cref{eq:positive-kernel-moment-bound} imply
\[
0\le\sum_{k=1}^dB_k
=\sum_{k=1}^d k\frac{B_k}{k}
\le d\sum_{k=1}^d\frac{B_k}{k}
\le d^2\eta_{d,d}(A).
\]
Together with $0\le T_d\le Bd^2$, this gives
\[
\begin{aligned}
0
&\le E_2(\omega_d)
-\left(\frac{d^2}{4}\mathfrak h_{d+1}
-\frac{d(d+1)}4\right)\\
&\le\frac{d^2}{4}\eta_{d,d}(A)+Bd^2.
\end{aligned}
\]
Moreover,
\[
\frac{d^2}{4}\mathfrak h_{d+1}-\frac{d(d+1)}4
=\frac14d^2\log d
+\frac{\gamma-1}{4}d^2+o(d^2).
\]
These bounds prove the critical order estimate.
For a family satisfying the hypotheses with fixed $A,B$,
they also give
\[
\liminf_{d\to\infty}R_2(\omega_d)
\ge\frac{\gamma-1}{4},
\]
and
\[
\limsup_{d\to\infty}R_2(\omega_d)
\le\frac{\gamma-1}{4}+\frac{\eta(A)}4+B
=\frac{A+\gamma-\log2}{2}+B,
\]
which are
\cref{eq:transfer-critical-lower,eq:transfer-critical-upper}.

Now let $0<s<2$, put $a=s/2$, and write
\[
(1-x)^{-a}=\sum_{k=0}^\infty b_kx^k,
\qquad
b_k:=\frac{\Gamma(k+a)}{\Gamma(a)\Gamma(k+1)}.
\]
The coefficients are positive and satisfy
$(k+1)b_{k+1}=(k+a)b_k$.
Since $0<a<1$, this recurrence shows that $b_k$ is decreasing
and $kb_k$ is increasing for $k\ge1$.
For each fixed $a\in(0,1)$, Stirling's formula gives
\[
b_k=\frac{k^{a-1}}{\Gamma(a)}
\left(1+O_a(k^{-1})\right),
\qquad k\to\infty.
\]
The recurrence also yields
\[
\sum_{k=0}^d b_k=\frac{d+a}{a}\,b_d,
\qquad
\sum_{k>d}\frac{b_k}{k+1}=\frac{b_{d+1}}{1-a}.
\]
The first identity follows by induction; the second follows
by summing
$b_k-b_{k+1}=(1-a)b_k/(k+1)$ and using $b_k\to0$.
Consequently, as $d\to\infty$ with $a$ fixed,
\begin{equation}\label{eq:binomial-coefficient-asymptotics}
\begin{aligned}
\sum_{k=0}^d b_k
&=\frac{d^a}{\Gamma(a)a}
\left(1+O_a(d^{-1})\right),\\
\sum_{k>d}\frac{b_k}{k+1}
&=\frac{d^{a-1}}{\Gamma(a)(1-a)}
\left(1+O_a(d^{-1})\right),\\
db_d
&=\frac{d^a}{\Gamma(a)}
\left(1+O_a(d^{-1})\right),\\
b_{d+1}
&=\frac{d^{a-1}}{\Gamma(a)}
\left(1+O_a(d^{-1})\right).
\end{aligned}
\end{equation}
Termwise integration, justified by nonnegativity, gives
\[
\sum_{k=0}^\infty\frac{b_k}{k+1}
=\int_0^1(1-x)^{-a}\,\mathrm{d}x
=\frac1{1-a}.
\]
Thus
$2^{-s}\sum_{k\ge0}b_k/(k+1)=2^{1-s}/(2-s)$.

Define the remainder
\[
R_d:=\sum_{i\ne j}\sum_{k>d}b_kx_{ij}^k.
\]
Since $b_k\le b_{d+1}$ for $k>d$, the geometric series and
\cref{eq:transfer-tail-hypothesis} imply
\[
0\le R_d
\le b_{d+1}\sum_{i\ne j}\frac{x_{ij}^{d+1}}{1-x_{ij}}
=4b_{d+1}T_d
\le4Bb_{d+1}d^2.
\]
Using
$\|\hat z_i-\hat z_j\|^{-s}=2^{-s}(1-x_{ij})^{-a}$
and removing the diagonal terms, we obtain
\[
E_s(\omega_d)
=2^{-s}\left(
\sum_{k=0}^d b_k
\left(B_k+\frac{d^2}{k+1}-d\right)+R_d
\right).
\]
Subtracting the full continuous contribution
$2^{-s}d^2\sum_{k\ge0}b_k/(k+1)$ and using $B_0=0$ gives
\begin{equation}\label{eq:subcritical-energy-truncation}
\begin{aligned}
E_s(\omega_d)-\frac{2^{1-s}}{2-s}d^2
=2^{-s}\Bigl(&\sum_{k=1}^d b_kB_k
-d\sum_{k=0}^d b_k\\
&-d^2\sum_{k>d}\frac{b_k}{k+1}+R_d\Bigr).
\end{aligned}
\end{equation}
The first and last terms in parentheses are nonnegative.
To bound the first, use the monotonicity of $kb_k$ and
\cref{eq:positive-kernel-moment-bound}:
\[
0\le\sum_{k=1}^d b_kB_k
=\sum_{k=1}^d(kb_k)\frac{B_k}{k}
\le db_d\sum_{k=1}^d\frac{B_k}{k}
\le d^2b_d\eta_{d,d}(A).
\]
Together with the bound for $R_d$ and
\cref{eq:binomial-coefficient-asymptotics}, this shows that
each of the four terms on the right-hand side of
\cref{eq:subcritical-energy-truncation} is
$O_{A,B,s}(d^{1+a})$.
Since $a=s/2$, the subcritical order estimate follows.

For the explicit bounds along a family with fixed $A,B$,
note that
\[
\begin{aligned}
\frac{
d\sum_{k=0}^d b_k
+d^2\sum_{k>d}b_k/(k+1)
}{d^{1+a}}
&\longrightarrow
\frac1{\Gamma(a)}
\left(\frac1a+\frac1{1-a}\right)\\
&=\frac1{\Gamma(a)a(1-a)},
\end{aligned}
\]
whereas the two nonnegative terms satisfy
\[
0\le
\frac{\sum_{k=1}^d b_kB_k+R_d}{d^{1+a}}
\le\frac{\eta(A)+4B}{\Gamma(a)}+o(1).
\]
Dividing \cref{eq:subcritical-energy-truncation}
by $d^{1+a}$ therefore yields
\[
\liminf_{d\to\infty}R_s(\omega_d)
\ge-\frac{2^{-s}}{\Gamma(a)a(1-a)}
=c_s^-,
\]
and
\[
\limsup_{d\to\infty}R_s(\omega_d)
\le c_s^-+\frac{2^{-s}}{\Gamma(a)}
\bigl(\eta(A)+4B\bigr)
=c_s^+(A,B).
\]
\end{proof}

Since $\eta(A)=2A+\gamma+1-2\log2$, decreasing $A$ improves the constant in \cref{eq:positive-kernel-moment-bound}, without changing the order of the bound in $d$.
In particular, $\eta(C_{\rm conj})=0.079710\ldots>0$.
Even at $A=C_{\rm conj}$, the upper bound in
\cref{eq:transfer-critical-upper} is $C_{2,\rm conj}+B$, not $C_{2,\rm conj}$.
For local logarithmic minimizers, the next lemma improves the critical estimate by retaining both tails together.

\begin{lemma}\label{lem:local-minimum-critical-transfer}
For every local logarithmic minimizer $\omega_d\subset\Sphere^2$,
$d\ge2$, the tail in \cref{eq:transfer-tail-hypothesis} satisfies
\begin{equation}\label{eq:inverse-fourth-tail-bound}
T_d\le\frac{4}{ed}E_4(\omega_d)
\le\frac{(d-1)^2}{4e}.
\end{equation}
If a family $\{\omega_d\}_{d\in\mathcal D}$ of such minimizers satisfies \cref{eq:transfer-log-hypothesis} with a fixed $A\in\R$ for all sufficiently large $d$, then
\begin{equation}\label{eq:critical-refined-transfer}
\limsup_{d\to\infty}
R_2(\omega_d)
\le\Phi_A(t),\qquad t\ge1,
\end{equation}
where $\Phi_A$ is defined in \cref{eq:critical-transfer-function}.
\end{lemma}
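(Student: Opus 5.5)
The first inequality in \cref{eq:inverse-fourth-tail-bound} will be proved pair by pair, and the second follows from \cref{th:local-minimum-inverse-fourth}. Since $\|\hat z_i-\hat z_j\|^2=4(1-x_{ij})$, the pairwise inequality
\[
\frac{x^{d+1}}{4(1-x)}\le\frac{4}{ed}\cdot\frac{1}{16(1-x)^2}
\]
is equivalent to $x^{d+1}(1-x)\le 1/(ed)$ for $0\le x<1$. The left side is maximized at $x=(d+1)/(d+2)$. Put $n=d+2$; the maximum value is $(1-1/n)^{n-1}/n$. Using $\log(1-1/n)\le-1/n$ gives $(1-1/n)^{n-1}\le e^{-1}e^{1/n}$, and $e^{1/n}\le 1+2/(n-2)=n/(n-2)$. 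Hence the maximum is at most $1/(e(n-2))=1/(ed)$. Summing over ordered pairs gives $T_d\le\frac{4}{ed}E_4(\omega_d)$. Then \cref{eq:local-minimum-inverse-fourth} gives $E_4(\omega_d)=\sum_iU_{4,i}\le d(d-1)^2/16$, which yields $T_d\le (d-1)^2/(4e)$.

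For \cref{eq:critical-refined-transfer}, I will rerun the proof of \cref{lem:log-to-riesz-transfer} with the truncation index $m:=\lfloor td\rfloor$ instead of $m=d$, and keep the logarithmic tail $L_m$ rather than discarding it. Start from \cref{eq:critical-energy-truncation} with this $m$. Bound $\sum_{k\le m}B_k\le m\sum_{k\le m}B_k/k$ by positivity of the $B_k$. Then apply \cref{eq:logarithmic-truncation-identity} together with \cref{eq:transfer-log-hypothesis}:
\[
\sum_{k\le m}\frac{B_k}{k}\le d\,\eta_{d,m}(A)-L_m.
\]
Substituting, we obtain
\[
E_2(\omega_d)\le \frac{d^2}{4}\mathfrak h_{m+1}-\frac{d(m+1)}{4}+\frac{md}{4}\eta_{d,m}(A)
+\Bigl(T_m-\frac m4L_m\Bigr).
\]
The bracket is a sum over pairs of
\[
g_m(x):=\frac14\sum_{k>m}\Bigl(1-\frac mk\Bigr)x^k\ge0.
\]
This is the point of retaining both tails together: the combined tail is much smaller than $T_m$ alone.

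Next, control the combined tail through the inverse-fourth-power bound. Write $g_m(x)\le \frac{\lambda_m}{16(1-x)^2}$ with
\[
\lambda_m:=\sup_{0\le x<1}16(1-x)^2g_m(x),
\]
so that
\[
\sum_{i\ne j}g_m(x_{ij})\le\lambda_m E_4(\omega_d)\le\lambda_m\frac{d(d-1)^2}{16}.
\]
Use $x^k\le e^{-k(1-x)}$ and set $u=m(1-x)$. A Riemann-sum comparison then gives
\[
m\,\lambda_m\lesssim \sup_{u>0}4u^2\int_1^\infty\Bigl(1-\frac1v\Bigr)e^{-uv}\,\mathrm{d}v+o(1),
\]
where the integral equals $e^{-u}/u-J(u)$. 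Evaluating or bounding this supremum, with $m\approx td$ so that $d^3\lambda_m/16\approx d^2\cdot(\text{function of }t)$, is what should produce the $e^{-t}-tJ(t)$ terms in $\Phi_A$.

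Finally, collect the asymptotics with $m=\lfloor td\rfloor$:
\begin{gather*}
\mathfrak h_{m+1}=\log d+\log t+\gamma+o(1),\qquad
\frac{d(m+1)}{4}=\frac{t}{4}d^2+o(d^2),\\
\eta_{d,m}(A)\to 2A+\log t+\gamma-2\log2+\frac1t.
\end{gather*}
Then divide by $d^2$ after subtracting $\frac14d^2\log d$. This should give
\[
\frac14\Bigl[\gamma+\log t+1+t\bigl(\eta(A)+\log t-2\bigr)\Bigr]
\]
plus the tail contribution.

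I expect the main obstacle to be the sharp pairwise estimate of the combined tail $g_m$. It must hold uniformly in $x$, including $x$ close to $1$, where the separation bound from \cref{th:local-minimum-inverse-fourth} is what keeps the $(1-x)^{-2}$ weight harmless. It must also be evaluated exactly enough in the scaling limit to match $e^{-t}-tJ(t)$ rather than a cruder constant. I would first try to organize the sum over $k$ exactly via $\sum_{k>m}(1-m/k)x^k$, and only then pass to the limit $u=m(1-x)$. If the supremum does not reproduce $\Phi_A$ exactly, I would adjust how $E_4$ enters, for instance by splitting the pairs according to whether $1-x_{ij}\lessgtr t/m$.
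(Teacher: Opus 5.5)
Your proof of \cref{eq:inverse-fourth-tail-bound} is correct. Maximizing $x^{d+1}(1-x)$ exactly works as well as the paper's route via $x_{ij}^{d+1}\le e^{-dr_{ij}^2/4}$ and $ye^{-dy/4}\le 4/(ed)$. For \cref{eq:critical-refined-transfer}, your setup matches the paper's: $m=\lfloor td\rfloor$, keeping $L_m$, writing $T_m-\frac m4L_m=\sum_{i\ne j}g_m(x_{ij})$, and the final asymptotics. The gap is in the tail estimate, which as you set it up cannot produce $\Phi_A(t)$.

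Put $u=m(1-x)$ and $F(u):=u^2\int_u^\infty e^{-y}y^{-2}\,\mathrm{d}y=ue^{-u}-u^2J(u)$. Your expression $4u^2\int_1^\infty(1-1/v)e^{-uv}\,\mathrm{d}v$ equals $4F(u)$, so your bound reads $m\lambda_m\le 4\sup_{u>0}F(u)+o(1)$. Because $\lambda_m$ is a supremum over all $x\in[0,1)$, this is a constant independent of $t$. But $F$ increases from $0$ (indeed $F'(u)=e^{-u}-2uJ(u)\to1$ as $u\to0^+$) and peaks near $u^*\approx0.61$, with $F(u^*)\approx0.166$. For $t\ge1$, on the other hand, $F(t)\le F(1)=e^{-1}-J(1)\approx0.148$. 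So your argument gives only $\Phi_A(t)+\frac1{4t}\bigl(\sup_{u>0}F(u)-F(t)\bigr)$, which is strictly weaker than the claim for every $t\ge1$. The term $\frac14(e^{-t}-tJ(t))=F(t)/(4t)$ in $\Phi_A$ is the value of $F$ at scale $t$, not its global maximum.

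Two ingredients are missing. The first is separation. Bounding each summand of $U_{4,i}$ in \cref{eq:local-minimum-inverse-fourth} by $(d-1)^2/16$ gives $r_{ij}^2\ge4/(d-1)$. Hence every pair has $u_{ij}=mr_{ij}^2/4\ge q_d:=m/(d-1)>t\ge1$, using $\lfloor td\rfloor>t(d-1)$, so only scales $u\ge q_d$ enter. Separation is not needed to tame the weight $(1-x)^{-2}$; that is already harmless, since $(1-x)^2g_m(x)\to0$ as $x\to1$. Its role is to remove the scales $u<1$ where $F$ is largest, and it also makes your proposed split at $1-x_{ij}=t/m$ vacuous.

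The second is monotonicity: you need $F$ nonincreasing on $[1,\infty)$. The paper gets this from $F'(u)=e^{-u}-2uJ(u)$ and Jensen's inequality $J(u)=e^{-u}\int_0^\infty e^{-v}(u+v)^{-1}\,\mathrm{d}v\ge e^{-u}/(u+1)$. Then $\sum_{i\ne j}g_m(x_{ij})\le\frac4mF(q_d)E_4(\omega_d)\le\frac{d(d-1)^2}{4m}F(q_d)$, and $q_d\to t$ gives exactly $F(t)/(4t)$.

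Finally, to keep the constant sharp, replace your Riemann-sum step (the $\lesssim$ with an $o(1)$ error) by a pointwise inequality. The identity $\frac{x^{m+1}}{1-x}-m\sum_{k>m}\frac{x^k}{k}=\int_0^x\frac{v^m}{(1-v)^2}\,\mathrm{d}v$, together with $(1-y/m)^m\le e^{-y}$, gives $4g_m(x)\le\frac{m}{u^2}F(u)$ with no error term.
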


\begin{proof}
\leavevmode\phantomsection
\label{proof:local-minimum-critical-transfer}
Throughout this proof, write
$r_{ij}:=\|\hat z_i-\hat z_j\|$.
Summing the inverse-fourth-power bound in
\cref{eq:local-minimum-inverse-fourth} over the nodes,
and bounding each summand individually, gives
\begin{equation}\label{eq:local-minimum-fourth-energy-separation}
E_4(\omega_d)\le\frac{d(d-1)^2}{16},
\qquad
r_{ij}^2\ge\frac4{d-1}\quad(i\ne j).
\end{equation}
Since $x_{ij}=1-r_{ij}^2/4$, we have
$x_{ij}^{d+1}\le e^{-dr_{ij}^2/4}$.
Also,
\[
y e^{-dy/4}\le\frac4{ed},
\qquad y\ge0.
\]
Consequently,
\[
\frac{x_{ij}^{d+1}}{r_{ij}^2}
\le
\frac{r_{ij}^2e^{-dr_{ij}^2/4}}{r_{ij}^4}
\le\frac4{ed\,r_{ij}^4}.
\]
Summing over $i\ne j$, we obtain
\[
T_d\le\frac4{ed}E_4(\omega_d)
\le\frac{(d-1)^2}{4e},
\]
which proves \cref{eq:inverse-fourth-tail-bound}.

For the second assertion, consider a family satisfying
\cref{eq:transfer-log-hypothesis} with a fixed $A$,
and take $d$ sufficiently large that this bound holds.
Fix $t\ge1$ and set $m=\lfloor td\rfloor\ge d$.
Use the notation of
\cref{eq:positive-kernel-moments,eq:transfer-tails}.
By \cref{eq:logarithmic-truncation-identity} and the
logarithmic-energy hypothesis,
\[
\sum_{k=1}^m\frac{B_k}{k}+L_m
\le d\eta_{d,m}(A).
\]
Since $B_k\ge0$, it follows that
\[
\sum_{k=1}^mB_k
\le m\sum_{k=1}^m\frac{B_k}{k}
\le md\eta_{d,m}(A)-mL_m.
\]
Substituting this bound into
\cref{eq:critical-energy-truncation} gives
\begin{equation}\label{eq:critical-combined-tail-estimate}
E_2(\omega_d)\le
\frac{d^2}{4}\mathfrak h_{m+1}
-\frac{d(m+1)}4
+\frac{md}{4}\eta_{d,m}(A)
+T_m-\frac m4L_m.
\end{equation}
We retain the negative term $-mL_m/4$ and estimate it
together with $T_m$ to exploit the cancellation between
the two tails.

For $0\le x<1$, we have
\[
\frac{x^{m+1}}{1-x}
-m\sum_{k>m}\frac{x^k}{k}
=
\int_0^x\frac{v^m}{(1-v)^2}\,\mathrm{d}v.
\]
Indeed, both sides vanish at $x=0$ and have derivative
$x^m/(1-x)^2$.
Set $u=m(1-x)>0$.
The substitution $y=m(1-v)$ gives
\[
\begin{aligned}
\int_0^x\frac{v^m}{(1-v)^2}\,\mathrm{d}v
&=
m\int_u^m\frac{(1-y/m)^m}{y^2}\,\mathrm{d}y\\
&\le
m\int_u^\infty\frac{e^{-y}}{y^2}\,\mathrm{d}y
=\frac{m}{u^2}F(u),
\end{aligned}
\]
where
\[
F(u):=
u^2\int_u^\infty\frac{e^{-y}}{y^2}\,\mathrm{d}y,
\qquad u>0.
\]
Here we used $(1-y/m)^m\le e^{-y}$ for $0\le y\le m$,
and then enlarged the interval of integration.
This integral representation shows that $F(u)\ge0$.
For $u\ge1$, integration by parts yields
\[
F(u)=u e^{-u}-u^2J(u),
\qquad
J(u)=\int_u^\infty\frac{e^{-y}}{y}\,\mathrm{d}y,
\]
and hence
\[
F'(u)=e^{-u}-2uJ(u).
\]
To determine the sign of this derivative, write
\[
J(u)=e^{-u}\int_0^\infty
\frac{e^{-v}}{u+v}\,\mathrm{d}v.
\]
The measure $e^{-v}\,\mathrm{d}v$ on $[0,\infty)$ is a
probability measure with mean $1$.
Since $v\mapsto(u+v)^{-1}$ is convex, Jensen's inequality gives
\[
J(u)\ge\frac{e^{-u}}{u+1}.
\]
Therefore,
\[
F'(u)
\le e^{-u}\left(1-\frac{2u}{u+1}\right)
=e^{-u}\frac{1-u}{u+1}
\le0,
\qquad u\ge1.
\]
Thus $F$ is nonnegative and nonincreasing on $[1,\infty)$.
For each pair $i\ne j$, set
\[
u_{ij}:=m(1-x_{ij})=\frac{mr_{ij}^2}{4}.
\]
By \cref{eq:local-minimum-fourth-energy-separation},
\[
u_{ij}\ge\frac{m}{d-1}=:q_d>1.
\]
The identity and integral estimate above therefore imply
\begin{align*}
0\le T_m-\frac m4L_m
&=
\frac14\sum_{i\ne j}
\int_0^{x_{ij}}\frac{v^m}{(1-v)^2}\,\mathrm{d}v\\
&\le
\frac4m\sum_{i\ne j}\frac{F(u_{ij})}{r_{ij}^4}\\
&\le
\frac4mF(q_d)E_4(\omega_d)\\
&\le
\frac{d(d-1)^2}{4m}F(q_d).
\end{align*}
As $d\to\infty$, we have $m/d\to t$ and $q_d\to t$.
By continuity of $F$, it follows that
\begin{equation}\label{eq:critical-combined-tail-limit}
\limsup_{d\to\infty}
\frac{T_m-mL_m/4}{d^2}
\le\frac{F(t)}{4t}
=\frac14\bigl(e^{-t}-tJ(t)\bigr).
\end{equation}
Finally, the harmonic-number asymptotic and $m/d\to t$ give
\[
\mathfrak h_{m+1}-\log d
\longrightarrow\gamma+\log t,
\]
and
\[
\eta_{d,m}(A)
\longrightarrow
2A+\gamma+\log t-2\log2+\frac1t.
\]
Subtracting $\frac14d^2\log d$ from
\cref{eq:critical-combined-tail-estimate},
dividing by $d^2$, and using
\cref{eq:critical-combined-tail-limit}, we obtain
\[
\begin{aligned}
\limsup_{d\to\infty}R_2(\omega_d)
\le\frac14\Bigl[
&\gamma+\log t-t\\
&+t\left(
2A+\gamma+\log t-2\log2+\frac1t
\right)\\
&+e^{-t}-tJ(t)
\Bigr].
\end{aligned}
\]
Since $\eta(A)=2A+\gamma+1-2\log2$, the expression on
the right equals
\[
\frac14\Bigl[
\gamma+\log t+1
+t\bigl(\eta(A)+\log t-2\bigr)
+e^{-t}-tJ(t)
\Bigr]
=\Phi_A(t).
\]
This proves \cref{eq:critical-refined-transfer}.
\end{proof}

\subsubsection{Applications to Fekete configurations}

\begin{proof}[Proof of \cref{thm:fekete-riesz-potentials}]
\leavevmode\phantomsection
\label{proof:fekete-riesz-potentials}
Choose the separation constant $\alpha$ from \cref{cor:fekete-separation} and put $a_d=\alpha/\sqrt d$. For each node, define
\[
N_i(t):=\#\{j\ne i:\|\hat{z}_i-\hat{z}_j\|\le t\},\qquad0\le t\le2.
\]
Then $N_i(t)=0$ for $t<a_d$. A chordal cap of radius $t$ has normalized area $t^2/4$ and geodesic radius $2\arcsin(t/2)\le\pi t/2$. 
Since the cap is centered at $\hat z_i$,
\[
\#(\omega_d\cap B_{\rm ch}(\hat z_i,t))=1+N_i(t).
\]
Thus \cref{eq:fekete-local-count} gives
\[
\left|N_i(t)+1-\frac{d\,t^2}{4}\right|
\le C_1(1+t\sqrt d).
\]
Set $R_i(t):=N_i(t)-d\,t^2/4$. By the preceding estimate
and the triangle inequality,
\[
|R_i(t)|
\le C_1(1+t\sqrt d)+1
\le (C_1+1)(1+t\sqrt d).
\]
Thus, with $C=C_1+1$,
\begin{equation}\label{eq:radial-count-remainder}
N_i(t)=\frac{d\,t^2}{4}+R_i(t),
\qquad
|R_i(t)|\le C(1+t\sqrt d),
\end{equation}
uniformly in $i$, $d$, $t$, and the configuration.
For $s>0$ and $0<r\le2$, the identity
\[
r^{-s}=2^{-s}+s\int_r^2 t^{-s-1}\,\mathrm{d}t
\]
gives, by summing the finite collection of pairwise distances,
\begin{equation}\label{eq:riesz-radial-representation}
U_{s,i}(\omega_d)=(d-1)2^{-s}+s\int_{a_d}^2\frac{N_i(t)}{t^{s+1}}\,\mathrm{d}t.
\end{equation}
Suppose first that $0<s<2$. 
Substituting \cref{eq:radial-count-remainder} into
\cref{eq:riesz-radial-representation}, we obtain
\[
U_{s,i}(\omega_d)
=(d-1)2^{-s}
+\frac{sd}{4}\int_{a_d}^{2}t^{1-s}\,\mathrm{d}t
+s\int_{a_d}^{2}\frac{R_i(t)}{t^{s+1}}\,\mathrm{d}t.
\]
Since $0<s<2$, evaluating the first integral gives
\[
U_{s,i}(\omega_d)-\frac{2^{1-s}}{2-s}\,d
=
-2^{-s}
-\frac{sd}{4(2-s)}a_d^{2-s}
+s\int_{a_d}^{2}\frac{R_i(t)}{t^{s+1}}\,\mathrm{d}t.
\]
The lower-cutoff term is $O_s(d^{s/2})$, while
\[
\left|s\int_{a_d}^2\frac{R_i(t)}{t^{s+1}}\,\mathrm{d}t\right|
\le C_s\left(d^{s/2}+\sqrt d\int_{a_d}^2t^{-s}\,\mathrm{d}t\right).
\]
The last integral is $O_s(1)$ for $0<s<1$, $O(\log d)$ for $s=1$, and $O_s(d^{(s-1)/2})$ for $1<s<2$. These bounds prove \cref{eq:fekete-local-riesz-subcritical} with precisely the stated remainders.

At $s=2$, \cref{eq:riesz-radial-representation} instead yields
\[
U_{2,i}(\omega_d)
=\frac{d-1}{4}+\frac d2\log\frac2{a_d}
+2\int_{a_d}^2\frac{R_i(t)}{t^3}\,\mathrm{d}t.
\]
The error satisfies
\[
\left|2\int_{a_d}^2\frac{R_i(t)}{t^3}\,\mathrm{d}t\right|
\le2C\int_{a_d}^2(t^{-3}+\sqrt d\,t^{-2})\,\mathrm{d}t
\le C(\alpha^{-2}+2\alpha^{-1})d.
\]
Since $\frac d2\log(2/a_d)=\frac14d\log d+\frac d2\log(2/\alpha)$, this proves \cref{eq:fekete-local-riesz-two}. Every local estimate is uniform in the node and the configuration. Summing this critical estimate proves \cref{eq:fekete-riesz-two}.

For the sharper global subcritical remainder, apply
\cref{lem:log-to-riesz-transfer}. The expansion
\cref{eq:asymp_min_energy} gives
\cref{eq:transfer-log-hypothesis} with a universal $A$,
including the finitely many small degrees after increasing it.
The tail bound \cref{eq:inverse-fourth-tail-bound} gives
\cref{eq:transfer-tail-hypothesis} with $B=1/(4e)$.
This proves \cref{eq:fekete-riesz-subcritical}.
\end{proof}

\begin{proof}[Proof of \cref{cor:fekete-riesz-second-order}]
\leavevmode\phantomsection
\label{proof:fekete-riesz-second-order}
Fix $\varepsilon>0$. By
\cref{eq:asymp_min_energy,eq:Clog-bounds}, every Fekete
configuration satisfies \cref{eq:transfer-log-hypothesis}
with $A=C_{\rm conj}+\varepsilon$ for all sufficiently large
$d$. No sign for the $o(d)$ remainder is assumed.
By \cref{eq:inverse-fourth-tail-bound}, we can take $B=1/(4e)$.
Applying \cref{lem:log-to-riesz-transfer} and then letting
$\varepsilon\downarrow0$ gives
\cref{eq:fekete-second-order-lower,eq:fekete-second-order-upper,eq:fekete-critical-second-order-lower}.
For $a=s/2\in(0,1)$, the subcritical upper constant is negative because
\[
\eta(C_{\rm conj})+e^{-1}=0.447590\ldots
<4\le\frac1{a(1-a)}.
\]
Its sign also follows directly from $C_{\rm conj}<0$,
$\gamma<1$, $2\log2>1$, and $e^{-1}<1$.

For the critical upper bound, fix $t\ge1$ and apply
\cref{lem:local-minimum-critical-transfer} with the same $A$.
Letting $\varepsilon\downarrow0$ gives the bound
$\Phi_{C_{\rm conj}}(t)$. Taking the infimum over $t\ge1$
proves \cref{eq:fekete-critical-second-order-upper}.
In particular, $t=4/3$ yields the stated value of $c_{\rm up}$.
To check its sign without relying on the displayed decimal,
use $J(t)\ge e^{-t}/(t+1)$ and $C_{\rm conj}<0$:
\[
4c_{\rm up}\le
\gamma+\log\frac43+1
+\frac43\left(\gamma-2\log2+\log\frac43-1\right)
+\frac37e^{-4/3}<0.
\]
All estimates use only degree-dependent energy bounds and
universal local-minimum constants, so the remainders are
uniform over the choice of Fekete configuration.
\end{proof}

\begin{proof}[Proof of \cref{prop:minimal-riesz-upper-comparison}]
\leavevmode\phantomsection
\label{proof:minimal-riesz-upper-comparison}
For each $d$, choose an elliptic Fekete configuration $\omega_d$.
Since $\mathcal E_s(d)\le E_s(\omega_d)$,
\cref{cor:fekete-riesz-second-order} gives the first inequality in
\cref{eq:minimal-riesz-second-order-upper}.
It remains to compare the constants.

Put $a=s/2\in(0,1)$, $u=a(1-a)$, and
\[
K:=\eta(C_{\rm conj})+e^{-1}
=2C_{\rm conj}+\gamma+1-2\log2+e^{-1}.
\]
The gamma reflection formula gives
\begin{equation}\label{eq:difconstants}
C_{\rm TM}(s)-C_{\rm F}(s)
=\frac{2^{-s}}{\Gamma(a)}\bigl(\Psi(a)-K\bigr),
\qquad
\Psi(a):=\frac1u+
\frac{\pi a(1+a)}{\sin(\pi a)}\zeta(1-a).
\end{equation}
We prove that $\Psi(a)>1/2$ and $K<1/2$.

For $0<q<1$, Euler--Maclaurin summation gives
\[
\zeta(q)=\frac1{q-1}+\frac12+\frac q{12}
-\frac{q(q+1)(q+2)}{720}+\mathcal R(q).
\]
The fourth Bernoulli polynomial
\[
B_4(x)=x^4-2x^3+x^2-\frac1{30}
\]
satisfies $\sup_{0\le x\le1}|B_4(x)|=1/30$.
Consequently, the integral remainder satisfies
\[
\begin{aligned}
|\mathcal R(q)|
&\le
\frac{q(q+1)(q+2)(q+3)}{4!}\,
\frac1{30}\int_1^\infty t^{-q-4}\,\mathrm{d}t\\
&=\frac{q(q+1)(q+2)}{720}.
\end{aligned}
\]
Since $(q+1)(q+2)\le6$, it follows that
\[
\begin{aligned}
\zeta(q)
&\ge
\frac1{q-1}+\frac12+
q\left(\frac1{12}-\frac{(q+1)(q+2)}{360}\right)\\
&\ge
\frac1{q-1}+\frac12+\frac q{15}.
\end{aligned}
\]
Taking $q=1-a$ and weakening $1/15$ to $1/16$ to simplify
the algebra below, we obtain
\[
\zeta(1-a)\ge-\frac1a+\frac12+\frac{1-a}{16}.
\]
Next, the gamma product and reflection formulas yield
\[
p(a):=\frac{\sin(\pi a)}{\pi a(1-a)}
=\frac1{\Gamma(1+a)\Gamma(2-a)}
=\prod_{n=1}^{\infty}
\left(1+\frac{u}{n(n+1)}\right).
\]
Here the product identity uses
\[
1+\frac{u}{n(n+1)}
=\frac{(n+a)(n+1-a)}{n(n+1)}.
\]
Writing $q_n:=1/[n(n+1)]$, we have
\[
\sum_{n=1}^{\infty}q_n=1,
\qquad
\sum_{n=1}^{\infty}q_n^2
\le\frac12\sum_{n=1}^{\infty}q_n=\frac12.
\]
Hence
\[
\sum_{m<n}q_mq_n
=\frac12\left[
\left(\sum_{n=1}^{\infty}q_n\right)^2
-\sum_{n=1}^{\infty}q_n^2
\right]
\ge\frac14.
\]
Expanding the finite partial products, retaining the terms
of degrees zero, one, and two in $u$, and passing to the
limit gives
\[
p(a)\ge
1+u\sum_{n=1}^{\infty}q_n
+u^2\sum_{m<n}q_mq_n
\ge1+u+\frac{u^2}{4}.
\]
Since $0<u\le1/4$, we have $1-u/2>0$.
The preceding bounds therefore imply
\begin{align*}
u p(a)\left(\Psi(a)-\frac12\right)
&=p(a)\left(1-\frac u2\right)
+a(1+a)\zeta(1-a)\\
&\ge
\left(1+u+\frac{u^2}{4}\right)\left(1-\frac u2\right)
+a(1+a)\left(-\frac1a+\frac12+\frac{1-a}{16}\right)\\
&=\frac{u(1+a)}{16}
\bigl(1-2a(1-a)(2-a)\bigr)>0.
\end{align*}
The last inequality follows from
\[
2a(1-a)(2-a)\le\frac{2-a}{2}<1.
\]
As $u p(a)>0$, this proves $\Psi(a)>1/2$.

To prove $K<1/2$, use the expansion
\[
\log\Gamma(1/3)
=\log3-\frac\gamma3
+\sum_{n=2}^{\infty}
\frac{(-1)^n\zeta(n)}{n3^n}.
\]
The terms in this alternating series decrease strictly
in absolute value to zero, so
\[
\log\Gamma(1/3)
>\log3-\frac\gamma3+\frac{\zeta(2)}{18}
-\frac{\zeta(3)}{81}.
\]
The explicit formula for $C_{\rm conj}$ gives
\[
K=
3\log(2\pi)-\log3-6\log\Gamma(1/3)
+\gamma+1+e^{-1}.
\]
Consequently,
\[
K<
3\log(2\pi)-7\log3+3\gamma+1+e^{-1}
-\frac{\zeta(2)}3+\frac{2\zeta(3)}{27}
<\frac{106}{225}<\frac12.
\]
The second inequality follows from
\[
\log(2\pi)<1.838,\qquad
\log3>1.098,\qquad
\gamma<0.578,
\]
and
\[
e^{-1}<0.368,\qquad
\zeta(2)>1.644,\qquad
\zeta(3)<1.203,
\]
since substitution of these rational bounds gives
\[
3(1.838)-7(1.098)+3(0.578)+1+0.368
-\frac{1.644}{3}+\frac{2(1.203)}{27}
=\frac{106}{225}.
\]
Combining $\Psi(a)>1/2>K$ with \cref{eq:difconstants},
and using $\Gamma(a)>0$, proves
$C_{\rm TM}(s)>C_{\rm F}(s)$.
\end{proof}

\begin{proof}[Proof of \cref{cor:fekete-riesz-hypersingular}]
\leavevmode\phantomsection
\label{proof:fekete-riesz-hypersingular}
Use the notation from the proof of
\cref{thm:fekete-riesz-potentials}, so that
$a_d=\alpha/\sqrt d$ and
\[
N_i(t):=
\#\{j\ne i:\|\hat z_i-\hat z_j\|\le t\}.
\]
By \cref{eq:radial-count-remainder},
\[
N_i(t)\le\frac{dt^2}{4}+C(1+t\sqrt d),
\qquad 0\le t\le2,
\]
with a universal constant $C$.
For $t\ge a_d$, we have $t\sqrt d\ge\alpha$, and hence
\[
1+t\sqrt d
\le(\alpha^{-2}+\alpha^{-1})dt^2.
\]
Thus
\[
N_i(t)\le C'dt^2,
\qquad a_d\le t\le2,
\]
where $C':=1/4+C(\alpha^{-2}+\alpha^{-1})$ is universal.

Fix $s>2$. By \cref{eq:riesz-radial-representation},
\[
U_{s,i}(\omega_d)
\le(d-1)2^{-s}
+sC'd\int_{a_d}^2t^{1-s}\,\mathrm{d}t.
\]
Since
\[
\int_{a_d}^2t^{1-s}\,\mathrm{d}t
=\frac{a_d^{2-s}-2^{2-s}}{s-2}
\le\frac{\alpha^{2-s}}{s-2}d^{(s-2)/2},
\]
and $d\le d^{s/2}$, it follows that
\[
U_{s,i}(\omega_d)\le C_s d^{s/2},
\]
where $C_s$ depends only on $s$.

For the reverse bound, choose a universal $A\ge\alpha$
so large that
\[
\frac{A^2}{4}-C(1+A)\ge1.
\]
Fix any $d\ge2$ and any node $i$.
If $A/\sqrt d\le2$, then
\cref{eq:radial-count-remainder} gives
\[
N_i(A/\sqrt d)
\ge\frac{A^2}{4}-C(1+A)\ge1.
\]
If $A/\sqrt d>2$, every other node is at distance at most
$2<A/\sqrt d$ from $\hat z_i$.
In either case, there is a node $j\ne i$ such that
\[
\|\hat z_i-\hat z_j\|\le\frac A{\sqrt d}.
\]
Its contribution to the potential yields
\[
U_{s,i}(\omega_d)
\ge\|\hat z_i-\hat z_j\|^{-s}
\ge A^{-s}d^{s/2}.
\]
This proves the local comparison uniformly for all
$d\ge2$, all nodes, and all elliptic Fekete configurations.
Finally, summing over $i$ and using
\[
E_s(\omega_d)=\sum_{i=1}^d U_{s,i}(\omega_d)
\]
proves the total-energy comparison.
\end{proof}

\subsection{Discrete products and integrals}\label{sec:proofs:products}

\begin{proof}[Proof of \cref{th:fekete-discrete-Id-Ii}]
\leavevmode\phantomsection
\label{proof:fekete-discrete-Id-Ii}
\Cref{cor:fekete-separation,th:covering-radius-fekete} give universal constants $\alpha,\beta>0$ such that
\[
\delta(\omega_d)\ge\frac{\alpha}{\sqrt d},
\qquad
\rho(\omega_d)\le\frac{\beta}{\sqrt d}.
\]
Choose $0<\eta<\min\{b,\alpha/3\}$, where $b$ is given by \cref{lem:fekete-lagrange-local-stability}, and set
\[
B_j:=B_{\rm ch}\left(\hat{z}_j,\frac{\eta}{\sqrt d}\right),
\qquad
A_j:=B_{\rm ch}\left(\hat{z}_j,\frac{\eta}{\sqrt d}\right)
\setminus
B_{\rm ch}\left(\hat{z}_j,\frac{\eta}{2\sqrt d}\right).
\]
The separation estimate implies $B_j\subset V_j$. Moreover,
\[
\sigma(B_j)\asymp d^{-1},
\qquad
\sigma(A_j)\asymp d^{-1},
\]
and \cref{lem:fekete-lagrange-local-stability} yields
\[
H_{\omega_d\setminus\{\hat{z}_j\}}(p)\asymp P_j
\]
for $p\in B_j$, where the upper bound follows from the Fekete property.

On $V_j$ we have
\[
H_{\omega_d}(p)^2
=
\|p-\hat{z}_j\|^2H_{\omega_d\setminus\{\hat{z}_j\}}(p)^2.
\]
Since $H_{\omega_d\setminus\{\hat{z}_j\}}\le P_j$, $\|p-\hat{z}_j\|\le\rho(\omega_d)$, and $\sigma(V_j)\lesssim d^{-1}$,
\[
\int_{V_j}H_{\omega_d}(p)^2\,\mathrm{d}\sigma(p)
\lesssim
\frac{P_j^2}{d^2}.
\]
Conversely, on $A_j$ one has $H_{\omega_d\setminus\{\hat{z}_j\}}\gtrsim P_j$ and $\|p-\hat{z}_j\|\asymp d^{-1/2}$, so
\[
\int_{A_j}H_{\omega_d}(p)^2\,\mathrm{d}\sigma(p)
\gtrsim
\frac{P_j^2}{d^2}.
\]
The Voronoi cells cover $\Sphere^2$ and overlap only on sets
of $\sigma$-measure zero, whereas the annuli $A_j\subset V_j$
are pairwise disjoint. Summing the lower bounds over the
annuli and the upper bounds over the cells gives
\begin{align*}
\frac1{d^2}\sum_{j=1}^dP_j^2
&\lesssim
\sum_{j=1}^d\int_{A_j}H_{\omega_d}(p)^2\,\mathrm{d}\sigma(p)\\
&\le I_d
=\sum_{j=1}^d\int_{V_j}H_{\omega_d}(p)^2\,\mathrm{d}\sigma(p)\\
&\lesssim \frac1{d^2}\sum_{j=1}^dP_j^2.
\end{align*}
Fix now $i$. On $V_i$, the preceding local estimates give
\[
\int_{V_i}H_{\omega_d\setminus\{\hat{z}_i\}}(p)^2\,\mathrm{d}\sigma(p)
\asymp
\frac{P_i^2}{d}.
\]
If $j\ne i$, then on $V_j$,
\[
H_{\omega_d\setminus\{\hat{z}_i\}}(p)
=
H_{\omega_d\setminus\{\hat{z}_j\}}(p)\frac{\|p-\hat{z}_j\|}{\|p-\hat{z}_i\|}.
\]
Since $p\in V_j$,
\[
\|\hat{z}_i-\hat{z}_j\|
\le
\|p-\hat{z}_i\|+\|p-\hat{z}_j\|
\le
2\|p-\hat{z}_i\|.
\]
Together with $H_{\omega_d\setminus\{\hat{z}_j\}}\le P_j$, $\|p-\hat{z}_j\|\lesssim d^{-1/2}$, and $\sigma(V_j)\lesssim d^{-1}$, this gives
\[
\int_{V_j}H_{\omega_d\setminus\{\hat{z}_i\}}(p)^2\,\mathrm{d}\sigma(p)
\lesssim
\frac{P_j^2}{d^2\|\hat{z}_i-\hat{z}_j\|^2}.
\]
For the reverse inequality, restrict the integral to $A_j$. 
There
\[
H_{\omega_d\setminus\{\hat{z}_j\}}(p)\gtrsim P_j,
\qquad
\|p-\hat{z}_j\|\asymp d^{-1/2},
\]
and, by separation,
\[
\|p-\hat{z}_i\|
\le
\|\hat{z}_i-\hat{z}_j\|+\frac{\eta}{\sqrt d}
\lesssim \|\hat{z}_i-\hat{z}_j\|.
\]
Hence
\[
\int_{A_j}H_{\omega_d\setminus\{\hat{z}_i\}}(p)^2\,\mathrm{d}\sigma(p)
\gtrsim
\frac{P_j^2}{d^2\|\hat{z}_i-\hat{z}_j\|^2}.
\]
Summing over all Voronoi cells proves
\[
I_{d,i}
\asymp
\frac{P_i^2}{d}
+
\frac1{d^2}\sum_{j\ne i}\frac{P_j^2}{\|\hat{z}_i-\hat{z}_j\|^2}.
\]
\end{proof}

\begin{proof}[Proof of \cref{cor:average-Ii-Id-log-gap}]
\leavevmode\phantomsection
\label{proof:average-Ii-Id-log-gap}
By \cref{th:fekete-discrete-Id-Ii}, interchanging the finite
sums gives
\[
\sum_{i=1}^d I_{d,i}
\asymp
\frac1d\sum_{j=1}^d P_j^2
+\frac1{d^2}\sum_{j=1}^d P_j^2
\sum_{\substack{1\le i\le d\\i\ne j}}
\frac1{\|\hat z_i-\hat z_j\|^2}.
\]
By \cref{eq:fekete-local-riesz-two}, the inner sums are
comparable to $d\log d$, uniformly in $j$ and the configuration.
For the finitely many small degrees, the lower comparison
follows from $\|\hat z_i-\hat z_j\|\le2$, and the upper one
from separation. Therefore,
\[
\sum_{i=1}^d I_{d,i}
\asymp
\frac{\log d}{d}\sum_{j=1}^d P_j^2
\asymp d\log d\,I_d.
\]
A bound $I_{d,i}\le C I_d$ uniform in $i,d$ would instead give
$\sum_{i=1}^d I_{d,i}\le CdI_d$, contradicting this comparison
as $d\to\infty$.
\end{proof}

\begin{proof}[Proof of \cref{cor:fekete-norm-comparisons}]
\leavevmode\phantomsection
\label{proof:fekete-norm-comparisons}
By \cref{lem:fekete-node-is-max}, the deleted product has supremum $P_i$, hence $I_{d,i}^{1/2}\le P_i$. 
Applying \cref{lem:spherical-nikolskii} to that degree-$(d-1)$ product gives $P_i\le\sqrt d\,I_{d,i}^{1/2}$. Also \cref{th:fekete-discrete-Id-Ii} yields
\[
I_d\lesssim\frac1{d^2}\sum_{j=1}^dP_j^2\le\frac1d\max_jP_j^2.
\]
Finally, substitute the upper bound for $P_i$ into \cref{eq:norm_cond} to get the local conditioning inequality.
\end{proof}

\begin{lemma}\label{lem:maximal-vandermonde-growth}
For every $d\ge2$,
\[
M_{d+1}^{d-1}\le M_d^{d+1}.
\]
If $\omega_d$ is elliptic Fekete, then
\[
\mathcal Q(\omega_d)\le\left(\frac{M_{d+1}}{M_d}\right)^{1/2}
\le M_d^{1/(d-1)}.
\]
\end{lemma}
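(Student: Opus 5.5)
The plan is to derive both assertions from one extremality principle: adding a point to a Fekete configuration and comparing with $M_{d+1}$. Recall that $M_d$ is the maximal \emph{ordered} product, i.e.\ the square of the maximal unordered product $\prod_{i<j}\|\hat z_i-\hat z_j\|$.

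\textbf{Second assertion, first inequality.} Let $\omega_d$ be elliptic Fekete and $p\in\Sphere^2$. Adjoining $p$ to $\omega_d$ gives a configuration of $d+1$ points, possibly with $p$ equal to a node, in which case the product vanishes. Its ordered product is $M_d\,H_{\omega_d}(p)^2$, since each distance $\|p-\hat z_j\|$ appears twice. By maximality,
\[
M_d\,H_{\omega_d}(p)^2\le M_{d+1}.
\]
Taking the supremum over $p$ gives $\mathcal Q(\omega_d)\le (M_{d+1}/M_d)^{1/2}$.

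\textbf{First inequality $M_{d+1}^{d-1}\le M_d^{d+1}$.} I would use the standard monotonicity of normalized Vandermonde maxima. Let $\omega_{d+1}$ be Fekete with $d+1$ points and ordered product $M_{d+1}=\prod_{i\ne j}\|\hat z_i-\hat z_j\|$. For each $k$, deleting $\hat z_k$ gives a $d$-point configuration with ordered product $M_{d+1}/P_k^2$, where $P_k=\prod_{j\ne k}\|\hat z_k-\hat z_j\|$. Hence $M_{d+1}\le M_d\,P_k^2$ for every $k$. Multiplying over $k=1,\ldots,d+1$ and using $\prod_k P_k^2=M_{d+1}^2$ gives
\[
M_{d+1}^{d+1}\le M_d^{d+1}M_{d+1}^2,
\]
that is, $M_{d+1}^{d-1}\le M_d^{d+1}$.

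\textbf{Second assertion, last inequality.} The claim is $(M_{d+1}/M_d)^{1/2}\le M_d^{1/(d-1)}$, equivalently $M_{d+1}\le M_d^{1+2/(d-1)}=M_d^{(d+1)/(d-1)}$. This is exactly the first inequality raised to the power $1/(d-1)$, which is legitimate since $d\ge2$ and all quantities are positive.

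There is no real obstacle. The only points needing care are the ordered-versus-unordered bookkeeping (each pair counted twice) and the degenerate case $p=\hat z_j$, where the inequality holds trivially.
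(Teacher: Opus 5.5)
Your proof is correct and matches the paper's argument. The paper also proves $M_{d+1}^{d-1}\le M_d^{d+1}$ by deleting each point of a $(d+1)$-point Fekete configuration in turn; its observation that each ordered pair survives in exactly $d-1$ deletions is your identity $\prod_k P_k^2=M_{d+1}^2$. It then bounds $\mathcal Q(\omega_d)$ by adjoining a point $p$ and using $M_dH_{\omega_d}(p)^2\le M_{d+1}$, and rewrites the growth inequality to obtain the final bound, exactly as you do.
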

\begin{proof}
\leavevmode\phantomsection
\label{proof:maximal-vandermonde-growth}
Start with a $(d+1)$-point Fekete configuration, and delete each of its points in turn. 
Let $M(\omega_d^{(m)})$ be the ordered product for the configuration obtained by the $m$-th deletion. 
Each ordered pair of distinct original nodes survives in exactly $d-1$ of the deleted configurations. 
Therefore
\[
M_{d+1}^{d-1}=\prod_{m=1}^{d+1}M(\omega_d^{(m)})\le M_d^{d+1}.
\]
Next, adjoining any point $p$ to a $d$-point Fekete configuration gives
\[
M_dH_{\omega_d}(p)^2\le M_{d+1}.
\]
Taking the maximum proves the first bound for $\mathcal Q$. The preceding growth inequality implies $M_{d+1}/M_d\le M_d^{2/(d-1)}$, which proves the second.
\end{proof}

We prove the maximum-product comparison at this point because it also supplies the individual upper bounds at the exponential scale.

\begin{proof}[Proof of \cref{prop:fekete-polarization-max-product}]
\leavevmode\phantomsection
\label{proof:fekete-polarization-max-product}
For $p\in\Sphere^2$, choose a nearest node $\hat{z}_i$. By \cref{lem:fekete-node-is-max,th:covering-radius-fekete},
\[
H_{\omega_d}(p)=\|p-\hat{z}_i\|H_{\omega_d\setminus\{\hat{z}_i\}}(p)
\le\rho(\omega_d)P_i\lesssim\frac1{\sqrt d}\max_jP_j.
\]
For the reverse inequality, choose $i$ with $P_i=\max_jP_j$ and choose $p$ at chordal distance $b/(2\sqrt d)$ from $\hat{z}_i$, reducing the universal $b$ in \cref{lem:fekete-lagrange-local-stability} to at most $1$ if necessary. That lemma gives
\[
H_{\omega_d}(p)\ge\frac{ab}{2\sqrt d}\,P_i.
\]
Taking the maximum over $p$ completes the comparison.
\end{proof}

\begin{proof}[Proof of \cref{th:fekete-exponential-scale-integrals}]
\leavevmode\phantomsection
\label{proof:fekete-exponential-scale-integrals}
First, \cref{eq:asymp_min_energy} gives
\begin{align*}
\frac{\log M_d}{d-1}
&=\frac{-\kappa d^2+\frac12d\log d-C_{\log}d+o(d)}{d-1}\\
&=-\kappa d+\frac12\log d-\kappa-C_{\log}+o(1).
\end{align*}
Exponentiating proves 
\begin{equation}\label{eq:Md-root-asymptotic}
M_d^{1/(d-1)}
=\sqrt d\,e^{-\kappa d-\kappa-C_{\log}+o(1)},
\end{equation}
and in particular $M_d^{1/(d-1)}\lesssim\sqrt d\,e^{-\kappa d}$ for all $d\ge2$, after enlarging the constant for finitely many degrees. Combining \cref{lem:maximal-vandermonde-growth,prop:fekete-polarization-max-product} yields
\[
P_i\le\max_jP_j\lesssim\sqrt d\,\mathcal Q(\omega_d)
\lesssim d\,e^{-\kappa d}.
\]
The lower bound for $P_i$ follows from \cref{lem:fekete-node-is-max,lem:product-jensen} applied to the deleted product.

By \cref{th:fekete-discrete-Id-Ii} and this upper bound,
\[
I_d\lesssim\frac1{d^2}\sum_{j=1}^dP_j^2\lesssim d\,e^{-2\kappa d}.
\]
Similarly, using \cref{eq:fekete-local-riesz-two},
\begin{align*}
I_{d,i}
&\lesssim\frac{P_i^2}{d}
+\frac1{d^2}\sum_{j\ne i}\frac{P_j^2}{\|\hat{z}_i-\hat{z}_j\|^2}\\
&\lesssim d\,e^{-2\kappa d}
+e^{-2\kappa d}U_{2,i}(\omega_d)
\lesssim d\log d\,e^{-2\kappa d}.
\end{align*}
The lower bounds for $I_d^{1/2}$ and $I_{d,i}^{1/2}$ are \cref{lem:product-jensen}. Taking square roots proves all the assertions.
\end{proof}

\begin{remark}\label{rem:deletion-versus-adjoining}
For $d\ge2$, deleting a node from a Fekete configuration gives $M_d/P_i^2\le M_{d-1}$, which bounds $P_i$ from below. 
The upper bound uses the complementary operation of adjoining a point and the growth inequality in \cref{lem:maximal-vandermonde-growth}. 
\end{remark}

\subsection{Logarithmic polarization}\label{sec:proofs:polarization}

\begin{proof}[Proof of \cref{th:fekete-wagner-polarization}]
\leavevmode\phantomsection
\label{proof:fekete-wagner-polarization}
By the definition of $\mathcal Q_d$ and
\cref{lem:maximal-vandermonde-growth},
\begin{equation*}
\mathcal Q_d\le\mathcal Q(\omega_d)
\le\left(\frac{M_{d+1}}{M_d}\right)^{1/2}
\le M_d^{1/(d-1)}.
\end{equation*}
By \cref{eq:Md-root-asymptotic} we have $\mathcal Q(\omega_d)\lesssim\sqrt d\,e^{-\kappa d}$
for all $d\ge2$, after adjusting the implicit constant for
finitely many degrees. On the other hand,
\cref{lem:product-jensen} gives $\mathcal Q_d\ge e^{-\kappa d}$.
Therefore,
\[
0\le
\log\frac{\mathcal Q(\omega_d)}{\mathcal Q_d}
\le\frac12\log d+O(1).
\]
\end{proof}

\begin{proof}[Proof of \cref{prop:fekete-polarization-upper-balance}]
\leavevmode\phantomsection
\label{proof:fekete-polarization-upper-balance}
If $\mathcal Q(\omega_d)\asymp e^{-\kappa d}$, \cref{prop:fekete-polarization-max-product} gives
\[
P_i\le\max_jP_j\lesssim\sqrt d\,\mathcal Q(\omega_d)
\lesssim\sqrt d\,e^{-\kappa d}.
\]
Conversely, that uniform upper bound and the same proposition yield $\mathcal Q(\omega_d)\lesssim e^{-\kappa d}$. The reverse inequality $\mathcal Q(\omega_d)\ge e^{-\kappa d}$ follows from \cref{lem:product-jensen} for every configuration.
\end{proof}

\subsection{Conditioning and log-balancedness}\label{sec:proofs:conditioning}

\begin{proof}[Proof of \cref{prop:optcond_Id}]
\leavevmode\phantomsection
\label{proof:optcond_Id}
The lower bound is \cref{lem:product-jensen}. For the upper bound, take geometric means in \cref{eq:norm_cond}. Since $\prod_iP_i\le M_d$ for every configuration,
\[
\frac{\sqrt{d(d+1)}}2\frac{I_d^{1/2}}{M_d^{1/d}}
\le\left(\prod_{i=1}^d\cond{f_d}{z_i}\right)^{1/d}
\le\condtotal{f_d}\le C\sqrt d.
\]
Consequently
\[
I_d^{1/2}\le\frac{2C}{\sqrt{d+1}}M_d^{1/d}
\lesssim_C e^{-\kappa d}
\]
by \cref{eq:fekete-geometric-mean}. Although that formula identifies the geometric mean for a minimizer, $M_d$ itself depends only on $d$; no minimality is assumed for the roots of $f_d$.
\end{proof}

The following elementary disk estimate is the analytic ingredient for \cref{th:optimal-conditioning-riesz}. It controls the number and the inverse-square interactions of selected zeros; other zeros of the function need not be excluded.

\begin{lemma}\label{lem:holomorphic-close-zeros}
Let $\Omega\subset\C$ be an open set containing the closed
disk $\{w\in\C:|w|\le4\}$, and let $g:\Omega\to\C$ be
holomorphic. Assume that, for some $M>0$,
\[
|g(w)|\le M,\qquad |w|\le4.
\]
Let $a_1,\ldots,a_k$ be distinct zeros of $g$ in
$\{w\in\C:|w|\le1\}$, and suppose that
\[
|g'(a_i)|\ge\eta,\qquad 1\le i\le k,
\]
for some $\eta>0$. Then there is an absolute constant $C>0$
such that
\[
k^2+\sum_{\substack{1\le i,j\le k\\i\ne j}}
|a_i-a_j|^{-2}
\le C\frac{M^2}{\eta^2}.
\]
For $k=0$, both terms on the left are understood to be zero.
\end{lemma}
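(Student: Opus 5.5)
We will factor out the zeros and use maximum-principle bounds on the quotient. Write $g(w)=q(w)h(w)$ with $q(w):=\prod_{j=1}^k(w-a_j)$ and $h$ holomorphic on $\Omega$. On the circle $|w|=4$ we have $|w-a_j|\ge3$, so $|q|\ge3^k$ there. The maximum principle then gives $|h(w)|\le M3^{-k}$ for $|w|\le4$.

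The key identity is $g'(a_i)=h(a_i)\prod_{j\ne i}(a_i-a_j)$. Since $|a_i-a_j|\le2$, we get
\[
\eta\le|g'(a_i)|\le M3^{-k}\prod_{j\ne i}|a_i-a_j|\le M3^{-k}2^{k-1}\min_{j\ne i}|a_i-a_j|,
\]
valid for $k\ge2$. For $k=1$ the same identity gives $\eta\le M/3$.

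\emph{Count term.} The displayed bound gives $\eta\le M(2/3)^k$ for every $k\ge1$. Hence $k\le\log(M/\eta)/\log(3/2)$, and consequently $k^2\lesssim\log^2(M/\eta)\lesssim M^2/\eta^2$. This uses $M/\eta\ge3/2$, which follows from the same inequality.

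\emph{Interaction term.} Fix $i$ and bound each factor more sharply. Keep the $m$ nearest factors $|a_i-a_j|$ explicitly and bound the rest by $2$. This gives
\[
\eta\le M3^{-k}2^{k-1-m}\prod_{j\in S_m}|a_i-a_j|,
\]
where $S_m$ is the set of the $m$ nearest neighbours of $a_i$. In particular, taking $m=1$, the nearest-neighbour distance satisfies $\delta_i:=\min_{j\ne i}|a_i-a_j|\ge(\eta/M)(3/2)^k\cdot\tfrac12\cdot$const.

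To control $\sum_j|a_i-a_j|^{-2}$, order the distances $r_1\le r_2\le\cdots\le r_{k-1}$. Applying the bound with $m$ factors gives $\prod_{l\le m}r_l\ge c(\eta/M)(3/2)^k2^{m}$. Since $r_m\ge(\prod_{l\le m}r_l)^{1/m}$, we get $r_m\ge c'(\eta/M)^{1/m}$. Thus
\[
\sum_m r_m^{-2}\lesssim\sum_{m\ge1}(M/\eta)^{2/m}.
\]
This sum is dominated by its first term $(M/\eta)^2$ together with a tail of size about $k\cdot(M/\eta)^{2/2}$, and $k\lesssim\log(M/\eta)$. So for each $i$, $\sum_{j\ne i}|a_i-a_j|^{-2}\lesssim (M/\eta)^2(3/2)^{-2k}+\ldots$.

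This is the main obstacle: summing over $i$ introduces an extra factor $k$, and it must be absorbed. The plan is to use the factor $(3/2)^{-2k}$, or equivalently $(2/3)^{2k}$, coming from the term $\delta_i^{-2}\le C(M/\eta)^2(2/3)^{2k}$. Since $k(2/3)^{2k}$ is bounded, this term is harmless after summing over $i$. The remaining terms $m\ge2$ contribute $k\cdot k(M/\eta)$, and this is $\lesssim(M/\eta)^2$ because $k\lesssim\log(M/\eta)$. To make this rigorous, I would keep the factor $(2/3)^k$ in all the $r_m$ bounds rather than dropping it. With that, the total is bounded by $C(M/\eta)^2\sum_k k(2/3)^{2k}$ plus lower-order terms, and each of these is $\lesssim M^2/\eta^2$ with an absolute constant.
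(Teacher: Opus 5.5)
Your proof is correct. Its skeleton is the paper's: divide out the selected zeros, bound the quotient on $|w|=4$ by the maximum principle, and read off geometric decay in $k$ from $g'(a_i)$.

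\textbf{Dividing out the zeros.} The paper divides by the factors $b_a(w)=4(w-a)/(16-\overline a w)$. These are unimodular on $|w|=4$, so the quotient is bounded by $M$ itself, and each remaining factor contributes $|b_{a_j}(a_i)|\le 8/15$. Your linear factors, with $|w-a_j|\ge3$ on the circle, do the same job with ratio $2/3$ in place of $8/15$.

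\textbf{The interaction term.} Here your route differs, and it is valid but unnecessarily roundabout. You use ordered distances $r_1\le\cdots\le r_{k-1}$, get $r_m\ge 2(\eta/M)^{1/m}$ from geometric means, and absorb the resulting $k^2 M/\eta$ via $k\lesssim\log(M/\eta)$; that does close the argument. It is simpler to note that the derivation behind your nearest-neighbour bound works for \emph{every} $j\ne i$. You may retain any single factor $|a_i-a_j|$ and bound the other $k-2$ factors by $2$, which gives
\[
\eta\le M\,3^{-k}2^{k-2}|a_i-a_j|
\qquad\text{and hence}\qquad
\sum_{i\ne j}|a_i-a_j|^{-2}\le\frac{k(k-1)(2/3)^{2k}}{16}\,\frac{M^2}{\eta^2}.
\]
Then $\sup_k k^2(2/3)^{2k}<\infty$ finishes. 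This is exactly how the paper proceeds, with $\theta^{2(k-2)}$ and $\theta=8/15$. So the ``main obstacle'' you identify never arises. Your closing remark about keeping $(2/3)^k$ in all the $r_m$ bounds is also not needed; for $m\ge2$ it would only yield $(2/3)^{2k/m}$ anyway.

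\textbf{The count term.} This likewise takes one line, $\eta^2k^2\le M^2k^2(2/3)^{2k}\le CM^2$, with no logarithms needed.
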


\begin{proof}
\leavevmode\phantomsection
\label{proof:holomorphic-close-zeros}
For $|a|\le1$, set
\[
b_a(w):=\frac{4(w-a)}{16-\overline a w}.
\]
For $|a|\le1$ and $|w|\le4$,
\[
|16-\overline a\,w|
\ge 16-|a||w|
\ge 12,
\]
so $b_a$ is holomorphic on a neighborhood of the closed disk
and has there a unique zero, namely the simple zero $w=a$.
For $|w|=4$,
\[
16-\overline a\,w
=
w\,\overline{(w-a)},
\]
and hence $|b_a(w)|=1$.
Since $g(a_j)=0$, the factor $w-a_j$ cancels in the quotient
\[
q(w):=\frac{g(w)}{\prod_{j=1}^k b_{a_j}(w)}.
\]
Thus $q$ extends holomorphically across each $a_j$.
On $|w|=4$, we have $|q(w)|=|g(w)|\le M$, so the maximum
modulus principle gives $|q(w)|\le M$ throughout the disk.
Also,
\[
|b_a'(a)|=\frac4{16-|a|^2}\le\frac4{15},
\qquad
|b_{a_j}(a_i)|\le\frac4{15}|a_i-a_j|\le\frac8{15}=:\theta<1.
\]
For $k\ge1$, differentiation at a selected zero gives
\[
\eta\le\frac4{15}M\theta^{k-1}.
\]
Squaring the preceding inequality and multiplying by $k^2$
gives
\[
\eta^2k^2
\le\frac{16}{225}M^2 k^2\theta^{2(k-1)}
\le CM^2.
\]
Here $C$ is absolute, since $\theta=8/15$ is fixed and
$k^2\theta^{2(k-1)}\to0$ as $k\to\infty$.
Dividing by $\eta^2$ proves $k^2\le CM^2/\eta^2$.
If $k\ge2$, retaining the distance to one particular other zero gives
\[
\eta\le\left(\frac4{15}\right)^2M|a_i-a_j|\theta^{k-2},
\qquad i\ne j.
\]
Squaring the reciprocal inequality and summing yields
\[
\sum_{i\ne j}|a_i-a_j|^{-2}
\le C\frac{M^2}{\eta^2}k(k-1)\theta^{2(k-2)}
\le C'\frac{M^2}{\eta^2}.
\]
For $k=1$ this sum is empty. 
This proves the lemma.
\end{proof}

\begin{lemma}\label{lem:conditioning-close-pairs}
Let $f$ have degree $d\ge2$, simple roots, and $\condtotal{f}\le C_0\sqrt d$. 
For its spherical root configuration, set
\[
A_d(t):=
\#\{(i,j):i\ne j,\ \|\hat{z}_i-\hat{z}_j\|\le t\}.
\]
There is a constant $C_{C_0}$, independent of $d$ and $f$, such that
\begin{equation}\label{eq:conditioning-microscopic-pairs}
\sum_{\substack{i\ne j\\
\|\hat{z}_i-\hat{z}_j\|\le d^{-1/2}}}
\frac1{\|\hat{z}_i-\hat{z}_j\|^2}
\le C_{C_0}d^2
\end{equation}
and
\begin{equation}\label{eq:conditioning-pair-count}
A_d(t)\le C_{C_0}d^2t^2,\qquad 0<t\le2.
\end{equation}
Consequently, the tail $T_d$ defined in \cref{eq:transfer-tail-hypothesis} satisfies $T_d\le C_{C_0}d^2$.
\end{lemma}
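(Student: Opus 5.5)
The plan is to localize around each root at the scale $d^{-1/2}$ and apply \cref{lem:holomorphic-close-zeros} to a rescaled, normalized version of the homogeneous polynomial. Fix a root $\hat z_i$, rotate so that $\hat z_i$ corresponds to $[1:0]$ (rotations preserve $P_j$, $I_d$, chordal distances and, by \cref{eq:norm_cond}, all local condition numbers), and write $g(z)=F(1,z)$ in the affine chart. Set
\[
h(w):=\frac{g(w/\sqrt d)}{\sqrt{d+1}\,I_d^{1/2}} .
\]
For $|w|\le4$ we have $|z|\le 4/\sqrt d$ and $(1+|z|^2)^{d/2}\le e^8$. Hence \cref{lem:spherical-nikolskii} gives $|h(w)|\le e^{8}\mathcal Q(\omega_d)/(\sqrt{d+1}I_d^{1/2})\le e^8=:M$, up to the constant relating $H_{\omega_d}$ and $|F|_{d\phi_{\rm FS}}$, which cancels in the normalization.

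Next I bound $|h'|$ from below at the roots of $h$ in $|w|\le1$. These roots are exactly the spherical roots within chordal distance $\asymp d^{-1/2}$ of $\hat z_i$. At such a root $w_j=\sqrt d z_j$, the derivative $|g'(z_j)|$ equals, up to the Fubini--Study weight $(1+|z_j|^2)^{(d-1)/2}\asymp1$ and the chordal factor $2/(1+|z_j|^2)$, a constant multiple of $P_j$. So $|h'(w_j)|\gtrsim P_j/(\sqrt d\,\sqrt{d+1}\,I_d^{1/2})$. By \cref{eq:norm_cond} and the hypothesis,
\[
\frac{P_j}{I_d^{1/2}}=\frac{\sqrt{d(d+1)}}{2\,\cond{f}{z_j}}\ge\frac{\sqrt{d+1}}{2C_0},
\]
so $|h'(w_j)|\ge\eta$ with $\eta\asymp 1/C_0$. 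The main obstacle is bookkeeping the chart normalizations so that all weights are uniformly comparable on $|z|\le 4/\sqrt d$. That is what makes the $d$-powers cancel exactly: $P_j\asymp\sqrt d\,I_d^{1/2}$ is the optimal scale.

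\Cref{lem:holomorphic-close-zeros} then gives two things. First, the number of roots in a chordal cap of radius $c/\sqrt d$ around any point is $\lesssim C_0$; centering at arbitrary points rather than roots is handled by using a root or noting that empty caps are trivial. Second,
\[
\sum|w_j-w_k|^{-2}\lesssim C_0^2 .
\]
Since $|w_j-w_k|\asymp\sqrt d\,\|\hat z_j-\hat z_k\|$, the pairs inside the cap around $\hat z_i$ contribute $\lesssim C_0^2 d$ to the inverse-square sum. Summing over $i$, and taking the constant $c$ so that every pair at distance $\le d^{-1/2}$ lies in the cap of one of its endpoints, yields \cref{eq:conditioning-microscopic-pairs}.

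For \cref{eq:conditioning-pair-count}, cover $\Sphere^2$ by $\lesssim 1+dt^2$-overlapping families. For $t\le d^{-1/2}$, use $A_d(t)\le t^2\sum_{r_{ij}\le t}r_{ij}^{-2}\le C d^2t^2$ from the first bound. For $t>d^{-1/2}$, cover each cap $B_{\rm ch}(\hat z_i,t)$ by $\lesssim dt^2$ caps of radius $c/\sqrt d$, each containing $O_{C_0}(1)$ roots. This gives $\#\{j:\|\hat z_i-\hat z_j\|\le t\}\lesssim dt^2$, and summing over $i$ gives $A_d(t)\lesssim d^2t^2$.

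Finally, for $T_d$ I split pairs into $r_{ij}\le d^{-1/2}$, handled by \cref{eq:conditioning-microscopic-pairs} using $x_{ij}\le1$, and the rest, bounded by $\sum e^{-dr_{ij}^2/4}r_{ij}^{-2}$. Writing this as a Stieltjes integral against $A_d(t)$ and integrating by parts with $A_d(t)\lesssim d^2t^2$ gives
\[
\lesssim d^2\int_{d^{-1/2}}^2 t^{-1}e^{-dt^2/4}\,\mathrm{d}t\lesssim d^2 .
\]
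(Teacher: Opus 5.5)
There is a genuine gap, and it sits in the step you called bookkeeping. With $h(w)=g(w/\sqrt d)/(\sqrt{d+1}\,I_d^{1/2})$ the chain rule gives, as you write, $|h'(w_j)|\asymp P_j/(\sqrt d\,\sqrt{d+1}\,I_d^{1/2})$. Inserting $P_j/I_d^{1/2}\ge\sqrt{d+1}/(2C_0)$ then yields $|h'(w_j)|\gtrsim 1/(C_0\sqrt d)$, not $1/C_0$.

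This cannot be repaired within your normalization. Since $\condtotal{f}\ge c_0\sqrt d$, \cref{eq:norm_cond} shows that some root has $P_j\lesssim\sqrt d\,I_d^{1/2}$, and there $|h'(w_j)|\lesssim d^{-1/2}$. So $M\asymp1$ but $\eta\asymp1/(C_0\sqrt d)$, i.e.\ $M^2/\eta^2\asymp C_0^2d$. The sharp factor $\sqrt{d+1}$ in \cref{lem:spherical-nikolskii} is exactly what is lost.

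With this ratio, \cref{lem:holomorphic-close-zeros} only gives $\sum|w_j-w_k|^{-2}\lesssim C_0^2d$ per cap. Each cap then contributes $O(C_0^2d^2)$ to the chordal inverse-square sum, and summing over $d$ caps gives $O(d^3)$ rather than \cref{eq:conditioning-microscopic-pairs}. Likewise the number of roots in a cap of radius $c/\sqrt d$ is controlled only by $k^2\lesssim C_0^2d$. Reopening the proof of that lemma through its factor $\theta^{k-1}$ improves this only to $O(\log(C_0d))$, not $O_{C_0}(1)$. Your covering argument for \cref{eq:conditioning-pair-count} therefore gives at best $d^2t^2\log d$.

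The missing idea is to localize the $L^2$ norm instead of using a global sup bound, and to accept that the per-cap estimate holds only on average.

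\begin{itemize}
\item Normalize $\int|F|^2_{d\phi_{\rm FS}}\,\mathrm{d}\sigma=1$ and take $g_q(w)=G_q(w/\sqrt d)$, without the extra $\sqrt{d+1}$. The condition-number formula then gives $|g_q'(a)|\ge1/C_0$ at every zero with $|a|\le1$.
\item The submean inequality for $|g_q|^2$ on disks of radius $1/2$ gives $\max_{|w|\le4}|g_q|^2\lesssim d\,m_q$. Here $m_q$ is the $\sigma$-mass of $|F|^2_{d\phi_{\rm FS}}$ on the image of $|w|<5$.
\item \cref{lem:holomorphic-close-zeros} then gives $k_q^2+\sum|a_i-a_j|^{-2}\lesssim C_0^2d\,m_q$.
\item Sum over a maximal $d^{-1/2}/4$-separated set of centers. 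The enlarged caps have bounded overlap, so $\sum_\alpha m_{q_\alpha}\lesssim1$. This yields the microscopic bound $\lesssim d\cdot d\sum_\alpha m_{q_\alpha}\lesssim d^2$, and $\sum_\alpha k_\alpha^2\lesssim d$.
\end{itemize}

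Individual $m_q$ can be much larger than $1/d$. Near $\hat z_{i_M}$ in \cref{thm:optimal-condition-not-log-balanced}, the bound $\cond{f_d}{z_{i_M}}\lesssim d^{1/3}$ gives $|g_q'(0)|\gtrsim d^{1/6}$, and Cauchy's estimate then forces $d\,m_q\gtrsim d^{1/3}$. So no uniform local count follows from this argument.

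The pair count must instead be done in aggregate. For $t\ge d^{-1/2}$, use $A_d(t)\le\sum_{\|q_\alpha-q_\beta\|\le t+d^{-1/2}/2}k_\alpha k_\beta\lesssim dt^2\sum_\alpha k_\alpha^2$. Your final Stieltjes estimate for $T_d$ is fine once \cref{eq:conditioning-pair-count} is established this way.
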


\begin{proof}
\leavevmode\phantomsection
\label{proof:conditioning-close-pairs}
Throughout this proof, write
$r_{ij}:=\|\hat z_i-\hat z_j\|$.
View the homogeneous form associated with $f$ as a section $F$ of $\mathcal O(d)$, normalized by
\[
\int_{\Sphere^2}|F|_{d\phi_{\rm FS}}^2\,\mathrm{d}\sigma=1.
\]
By \cref{eq:bombieri-spherical-norm}, $\|F\|_{\mathrm{BW}}=\sqrt{d+1}$.
This normalization does not change condition numbers.

For any center $q\in\Sphere^2$, use a unitary coordinate change taking $q$ to $[1:0]$, and represent $F$ by a polynomial $G_q(z)$ in the affine coordinate $z=v/u$. Put $h=d^{-1/2}$ and $g_q(w)=G_q(hw)$. 
Let $\Omega_q$ be the spherical image of $|w|<5$, and write
\[
m_q:=\int_{\Omega_q}|F|_{d\phi_{\rm FS}}^2\,\mathrm{d}\sigma
=\frac1{\pi d}\int_{|w|<5}
\frac{|g_q(w)|^2}{(1+|w|^2/d)^{d+2}}\,\mathrm{d}A(w).
\]
Here $\mathrm{d}A$ is planar area. 
The weight lies between $e^{-50}$ and $1$ on this fixed disk for all $d\ge2$, since $(d+2)\log(1+25/d)\le50$. 
The submean inequality on disks of radius $1/2$ centered at $|w|\le4$ therefore gives
\begin{equation}\label{eq:conditioning-chart-mass}
\max_{|w|\le4}|g_q(w)|^2
\le C\int_{|w|<5}|g_q(w)|^2\,\mathrm{d}A(w)
\le C'd\,m_q.
\end{equation}
At a zero $a$ with $|a|\le1$, the projective version of \cref{eq:condition-bombieri}, valid in every unitary chart, gives
\[
|g_q'(a)|=
\frac{\sqrt{d+1}}{\mu(F,[1:ha])}
(1+|a|^2/d)^{(d-2)/2}\ge\frac1{C_0}.
\]
A possible zero at infinity in this chart plays no role. 
Denote the zeros in $|w|\le1$ by $a_1,\ldots,a_{k_q}$. 
Thus \cref{lem:holomorphic-close-zeros,eq:conditioning-chart-mass} imply
\begin{equation}\label{eq:conditioning-chart-zero-bound}
k_q^2+\sum_{i\ne j}|a_i-a_j|^{-2}\le C_{C_0}d\,m_q.
\end{equation}
For these zeros, the chordal formula gives
\[
r_{ij}=\frac{2h|a_i-a_j|}
{\sqrt{1+|a_i|^2/d}\sqrt{1+|a_j|^2/d}},
\qquad r_{ij}^{-2}\le Cd\,|a_i-a_j|^{-2}.
\]
Choose a finite set of auxiliary centers $q_\alpha\in\Sphere^2$
such that
\[
\|q_\alpha-q_\beta\|\ge h/4,\qquad \alpha\ne\beta,
\]
and no further point of $\Sphere^2$ can be added while
preserving this separation.
Such a set exists by compactness.
The caps $B_{\rm ch}(q_\alpha,h/4)$ cover $\Sphere^2$:
otherwise, any point outside their union could be added
to the set of centers.
The caps $B_{\rm ch}(q_\alpha,10h)$ have bounded overlap.
Indeed, the caps $B_{\rm ch}(q_\alpha,h/8)$ are pairwise
disjoint. If a center $q_\alpha$ satisfies
$p\in B_{\rm ch}(q_\alpha,10h)$, then its $h/8$ cap is
contained in a cap centered at $p$ of radius $10h+h/8$.
When this radius is at most $2$, the chordal cap-area formula
bounds the number of such centers by an absolute constant.
In the remaining case $h$ is bounded below by an absolute
constant, so $d$ ranges over only finitely many values and the
same conclusion follows, after increasing the constant.
Since $\Omega_{q_\alpha}$ is contained in the $10h$ cap,
\[
\sum_\alpha m_{q_\alpha}\le C.
\]
The image of $|w|\le1$ is the chordal cap of radius $2/\sqrt{d+1}$ about $q_\alpha$. 
Every pair with $r_{ij}\le h$ belongs to at least one such cap: choose a center within $h/4$ of its first point; the second is within $5h/4$, and $5h/4<2/\sqrt{d+1}$ for $d\ge2$. Sum \cref{eq:conditioning-chart-zero-bound} over the centers and use the chordal comparison. 
All summands are nonnegative, so multiple counting is harmless. 
This proves \cref{eq:conditioning-microscopic-pairs}.

In all inverse-square sums below the diagonal is omitted. 
For the aggregate counting bound, assign each point of
$\Sphere^2$ to a nearest center $q_\alpha$; if several centers
are equally close, choose the one with the smallest index.
Let $W_\alpha$ be the set assigned to $q_\alpha$ and put
$k_\alpha:=\#(\omega_d\cap W_\alpha)$.
The sets $W_\alpha$ form a disjoint partition of $\Sphere^2$.
Each cell lies in the $h/4$ cap and hence in its chart's unit disk. 
By \cref{eq:conditioning-chart-zero-bound},
\[
\sum_\alpha k_\alpha^2\le C_{C_0}d.
\]
If $t\ge h$, a pair of points in $W_\alpha,W_\beta$ at distance at most $t$ requires $\|q_\alpha-q_\beta\|\le t+h/2$. Packing the separated centers shows that for each $\alpha$ there are at most $Cdt^2$ such $\beta$. This relation is symmetric; using $2k_\alpha k_\beta\le k_\alpha^2+k_\beta^2$ gives
\[
A_d(t)\le\sum_{\|q_\alpha-q_\beta\|\le t+h/2}k_\alpha k_\beta
\le Cdt^2\sum_\alpha k_\alpha^2\le C_{C_0}d^2t^2.
\]
For $t<h$, \cref{eq:conditioning-microscopic-pairs} instead gives
\[
A_d(t)\le t^2\sum_{r_{ij}\le h}r_{ij}^{-2}\le C_{C_0}d^2t^2.
\]
This proves \cref{eq:conditioning-pair-count}. It is an aggregate estimate over pairs, not a count uniform about every center.

Finally, the pairs with $r_{ij}\le h$ contribute at most $C_{C_0}d^2$ to $T_d$. 
For the annulus $2^m h<r_{ij}\le2^{m+1}h$, \cref{eq:conditioning-pair-count} and $x_{ij}^{d+1}\le e^{-dr_{ij}^2/4}$ bound its contribution by $C_{C_0}d^2e^{-4^m/4}$. 
If its outer radius exceeds $2$, use $A_d(2)\le d(d-1)$ instead. 
Summing over $m\ge0$ proves the stated bound on $T_d$.
\end{proof}

\begin{proof}[Proof of \cref{th:optimal-conditioning-riesz}]
\leavevmode\phantomsection
\label{proof:optimal-conditioning-riesz}
By \cref{eq:norm_cond,lem:product-jensen} and $\condtotal{f_d}\le C_0\sqrt d$,
\[
P_i\ge\frac{\sqrt{d+1}}{2C_0}\,I_d^{1/2}
\ge\frac{\sqrt{d+1}}{2C_0}\,e^{-\kappa d}.
\]
Taking logarithms and summing proves
\[
\Energy(\omega_d)\le\kappa d^2-\frac d2\log(d+1)+d\log(2C_0)
\le\kappa d^2-\frac12d\log d+A_{C_0}d
\]
with $A_{C_0}=\log(2C_0)\in\R$. This is the bound in \cite[equation~(9)]{Etayo2021}, reproduced here to fix all normalizations. \Cref{lem:conditioning-close-pairs} gives the tail hypothesis with $B$ depending only on $C_0$. Applying \cref{lem:log-to-riesz-transfer} proves both assertions.
\end{proof}

\begin{proof}[Proof of \cref{th:local-condition-implies-log-balanced}]
\leavevmode\phantomsection
\label{proof:local-condition-implies-log-balanced}
The upper local bound implies $\condtotal{f_d}\le C\sqrt d$. Hence, by \cref{prop:optcond_Id},
\[
I_d^{1/2}=\exp(-\kappa d+O(1)).
\]
Using \cref{eq:norm_cond},
\[
P_i
=
\frac{\sqrt{d(d+1)}}{2}\,
\frac{I_d^{1/2}}{\cond{f_d}{z_i}}.
\]
The two-sided local bounds therefore give
\[
P_i=\sqrt d\,\exp(-\kappa d+O(1))
\]
uniformly in $d$ and $1\le i\le d$. Thus $\{\omega_d\}_{d\in\mathcal D}$ is log-balanced.
\end{proof}

\begin{proof}[Proof of \cref{cor:log-balanced-local-lower-bound}]
\leavevmode\phantomsection
\label{proof:log-balanced-local-lower-bound}
By log-balancedness,
\[
P_i\asymp \sqrt d\,e^{-\kappa d}
\]
uniformly in $d$ and $i$. Substitution in \cref{eq:norm_cond} gives
\[
\cond{f_d}{z_i}
\asymp
\sqrt d\,e^{\kappa d}I_d^{1/2}.
\]
\Cref{lem:product-jensen} gives $I_d^{1/2}\ge e^{-\kappa d}$, and the lower bound follows.
\end{proof}

\begin{proof}[Proof of \cref{prop:condition-weights}]
\leavevmode\phantomsection
\label{proof:condition-weights}
For every polynomial with simple roots, \cref{eq:norm_cond} gives
\[
\mu_i^{-2}=\frac{4P_i^2}{d(d+1)I_d},
\qquad
\sum_{j=1}^d\mu_j^{-2}
=\frac{4\sum_{j=1}^dP_j^2}{d(d+1)I_d}.
\]
Dividing proves \cref{eq:condition-weight-identity}.
If the configuration is Fekete,
\cref{th:fekete-discrete-Id-Ii} gives
\[
I_d\asymp\frac1{d^2}\sum_{j=1}^dP_j^2,
\]
which proves \cref{eq:fekete-condition-weights}.
Taking maxima of $\mu_i^2\asymp1/w_i$ proves
\cref{eq:global-condition-weights}.
\end{proof}

\begin{proof}[Proof of \cref{prop:fekete-log-balanced-deletion}]
\leavevmode\phantomsection
\label{proof:fekete-log-balanced-deletion}
For $d\ge2$, \cref{eq:fekete-condition-weights} identifies
$w_i\asymp d^{-1}$ with $\cond{f_d}{z_i}\asymp\sqrt d$.
Log-balancedness gives $\sum_jP_j^2\asymp d^2e^{-2\kappa d}$
and hence $w_i\asymp d^{-1}$. Conversely, two-sided bounds
$w_i\asymp d^{-1}$ imply $P_i/P_j\asymp1$ uniformly.
Taking geometric means over $j$ and using
\cref{eq:fekete-geometric-mean} then gives
\[
P_i\asymp M_d^{1/d}\asymp\sqrt d\,e^{-\kappa d}.
\]
This proves \cref{eq:balanced-condition-weights}.

Since the energy counts ordered pairs, deleting $\hat z_i$ gives
$\Energy(\omega_d\setminus\{\hat z_i\})=\Energy(\omega_d)+2\log P_i$.
Consequently,
\begin{equation*}
\Energy\!\left(\omega_d\setminus\{\hat z_i\}\right)
-\Energy\!\left(\omega_d\setminus\{\hat z_j\}\right)
=2\log\!\left(\frac{P_i}{P_j}\right).
\end{equation*}
A uniform bound on these differences is equivalent to
$P_i/P_j\asymp1$, which is equivalent to log-balancedness by
the same geometric-mean argument. The degree $d=1$, if present,
is absorbed into the constants, with $P_1=1$ and
$\Energy(\varnothing)=0$. The deletion identity holds for any
configuration; minimality is used to fix the geometric mean.
\end{proof}

\begin{proof}[Proof of \cref{prop:fekete-oscillation-local-condition}]
\leavevmode\phantomsection
\label{proof:fekete-oscillation-local-condition}
Since every chordal distance is at most $2$,
\cref{lem:fekete-pairwise-products} gives
\[
\frac{\max_i P_i}{\min_i P_i}\lesssim\sqrt d.
\]
For the local condition number, fix $i$. Squaring the lower
bound in \cref{eq:fekete-pairwise-products} gives
\[
\left(\frac{P_j}{P_i}\right)^2
\gtrsim
\frac1{d\,\|\hat z_i-\hat z_j\|^2},
\qquad j\ne i.
\]
Summing over the other nodes, we obtain
\[
\sum_{j=1}^d\left(\frac{P_j}{P_i}\right)^2
=
1+\sum_{j\ne i}\left(\frac{P_j}{P_i}\right)^2
\gtrsim
\frac1d\sum_{j\ne i}\frac1{\|\hat z_i-\hat z_j\|^2}
=\frac1d U_{2,i}(\omega_d).
\]
By \cref{eq:fekete-local-riesz-two},
\[
\frac1d U_{2,i}(\omega_d)\gtrsim\log(d+1)
\]
for all sufficiently large degrees, uniformly in the node
and configuration. For the finitely many smaller degrees,
the bound $U_{2,i}(\omega_d)\ge(d-1)/4$ gives the same
comparison after decreasing the implicit constant.
Using \cref{eq:fekete-condition-weights}, we conclude that
\[
\cond{f_d}{z_i}^2
\asymp\frac1{w_i}
=\sum_{j=1}^d\left(\frac{P_j}{P_i}\right)^2
\gtrsim\log(d+1).
\]
Taking square roots proves \cref{eq:fekete-local-condition-log}.
The same estimate gives $w_i\lesssim1/\log(d+1)$.
Finally, taking logarithms of the product-ratio bound yields
\[
\max_i\log P_i-\min_i\log P_i
\le\frac12\log d+O(1).
\]
\end{proof}

\begin{proof}[Proof of \cref{cor:log-balanced+Fekete-local-lower-bound}]
\leavevmode\phantomsection
\label{proof:log-balanced+Fekete-local-lower-bound}
Log-balancedness gives $P_j^2\asymp d\,e^{-2\kappa d}$ uniformly. Substitution into \cref{th:fekete-discrete-Id-Ii} gives
\[
I_d\asymp\frac1{d^2}\sum_{j=1}^d d\,e^{-2\kappa d}=e^{-2\kappa d}
\]
and
\[
I_{d,i}\asymp e^{-2\kappa d}\left(1+\frac1d U_{2,i}(\omega_d)\right).
\]
By \cref{eq:fekete-local-riesz-two}, the last parenthesis is comparable to $\log d$, uniformly in $i$; finite small degrees are absorbed into the constants. Hence $I_{d,i}\asymp(\log d)e^{-2\kappa d}$ and $I_{d,i}/I_d\asymp\log d$. Taking square roots gives the integral estimates, and \cref{eq:norm_cond} gives $\cond{f_d}{z_i}\asymp\sqrt d$. The constants depend at most on the log-balancedness bounds of the family and the universal Fekete constants.
\end{proof}

\begin{remark}\label{rem:log-balanced-lagrange-norm}
For a log-balanced Fekete family, the same calculation gives
\[
\frac{I_{d,i}}{P_i^2}\asymp\frac{\log d}{d}.
\]
The comparison constants depend only on the log-balancedness
bounds of the family. Thus even in the log-balanced regime the one-point-deleted norm has a logarithmic contribution that is not captured by its mass on a single microscopic ball.
\end{remark}

\begin{proof}[Proof of \cref{thm:optimal-condition-not-log-balanced}]
\leavevmode\phantomsection
\label{proof:optimal-condition-not-log-balanced}
For $M\ge 5$, set $d=4M^2$ and define
\[
h_j=1-\frac{j^2}{M^2},
\qquad
\varrho(t)=\sqrt{\frac{1+t}{1-t}}.
\]
Consider the polynomial studied in \cite[Theorem~1.3]{BeltranLizarte2022}, whose spherical roots form a variant of the Diamond ensemble introduced by Beltr\'an and Etayo \cite{BeltranEtayo2020}:
\[
f_d(z)
=
\bigl(z^{4M}-1\bigr)
\prod_{j=1}^{M-1}
\bigl(z^{4j}-\varrho(h_j)^{4j}\bigr)
\bigl(z^{4j}-\varrho(h_j)^{-4j}\bigr).
\]
The degree is $4M+8\sum_{j=1}^{M-1}j=4M^2=d$. The radii of its distinct factors are different and each factor has simple roots, so all roots of $f_d$ are simple. 
By \cite[Theorem~1.3]{BeltranLizarte2022},
\[
\condtotal{f_d}\le \frac{19}{2}\sqrt{d+1}.
\]
Under inverse stereographic projection, the roots of \(f_d\) form the
configuration considered in \cite{BeltranLizarte2022}. Let
$\hat{z}_{i_M}$ be any point on its first northern parallel.
To make the two estimates used from that paper explicit, for $h\in(-1,1)$ and $q\in\Sphere^2$ set
\[
\mathcal A_h(q):=\frac1{2\pi}\int_0^{2\pi}
\log\left\|q-\bigl(\sqrt{1-h^2}\cos\theta,\sqrt{1-h^2}\sin\theta,h\bigr)\right\|\,\mathrm{d}\theta
\]
and
\[
S_d(q):=4M\mathcal A_0(q)+\sum_{j=1}^{M-1}4j\bigl(\mathcal A_{h_j}(q)+\mathcal A_{-h_j}(q)\bigr).
\]
The logarithmic singularity in a parallel integral is integrable. 
Thus $S_d(q)$ is obtained by replacing the sum of logarithmic distances from $q$ to the nodes on each parallel by the corresponding angular average multiplied by the number of nodes.
For $q=\hat{z}_{i_M}$ on the first northern parallel, \cite[Proposition~4.3]{BeltranLizarte2022}, with $\ell=1$, gives
\[
S_d(q)+\kappa d\ge-1+\frac13\log\frac{M+1}{2}.
\]
On the other hand, \cite[Proposition~6.3]{BeltranLizarte2022} gives
\[
\log P_{i_M}\ge S_d(q)+\log(2\sqrt2\,M)-\frac18.
\]
Adding these two bounds yields
\[
\log P_{i_M}
\ge
-\kappa d
+\log(2\sqrt{2}\,M)
+\frac{1}{3}\log\left(\frac{M+1}{2}\right)
-\frac{9}{8}.
\]
Consequently,
\[
P_{i_M}
\ge
2\sqrt{2}\,M\,e^{-\kappa d-9/8}
\left(\frac{M+1}{2}\right)^{1/3}.
\]
Since \(\sqrt d=2M\), it follows that
\[
\frac{P_{i_M}}{\sqrt d\,e^{-\kappa d}}
\ge
\sqrt{2}\,e^{-9/8}
\left(\frac{M+1}{2}\right)^{1/3}
\ge
2^{-1/6}e^{-9/8}d^{1/6}.
\]
The right-hand side tends to infinity as $M\to\infty$. Hence the uniform upper bound
\[
P_i\lesssim \sqrt d\,e^{-\kappa d},
\]
required by log-balancedness, fails along this sequence.

Finally, \cref{prop:optcond_Id} gives $I_d^{1/2}\lesssim e^{-\kappa d}$. Therefore, by \cref{eq:norm_cond},
\[
\cond{f_d}{z_{i_M}}
\lesssim
d\,\frac{e^{-\kappa d}}{d^{2/3}e^{-\kappa d}}
=
d^{1/3}.
\]
This completes the proof.
\end{proof}

\subsection*{AI Declaration}

The author used ChatGPT (OpenAI) as an interactive assistant during the development and preparation of this work. 
Its use included exploring mathematical questions, formulating candidate results, developing and revising proof arguments, checking calculations and consistency, identifying relevant literature, and editing the exposition and LaTeX source. 
The author directed the investigation, selected and revised the material included in the manuscript, and takes full responsibility for its mathematical claims, proofs, bibliographic attributions, and final presentation.

\bibliographystyle{amsplain}
\bibliography{Bib}

\end{document}